\newif\ifPDF
\newtheorem{thm}{Theorem}[section]
\newtheorem{cor}[thm]{Corollary}
\newtheorem{lem}[thm]{Lemma}
\newtheorem{prop}[thm]{Proposition}
\theoremstyle{definition}
\newtheorem{defn}[thm]{Definition}
\theoremstyle{remark}
\newtheorem{rem}[thm]{Remark}
\newtheorem{example}[thm]{Example}
\numberwithin{equation}{section}
\newtheorem{NN}[thm]{}
\newcommand{\norm}[1]{\left\Vert#1\right\Vert}
\newcommand{\abs}[1]{\left\vert#1\right\vert}
\newcommand{\Real}{\mathbb R}
\newcommand{\Comp}{\mathbb C}
\newcommand{\eps}{\varepsilon}
\newcommand{\F}{\mathcal{F}}
\newcommand{\Kzero}{\textrm{K}_0}
\newcommand{\tr}{\textrm{T}}
\newcommand{\MC}[2]{\mathrm{M}_{#1}(\mathrm{C}(#2))}
\begin{document}

\title{Mean dimension and AH-algebras with diagonal maps}

\author{Zhuang Niu}
\address{Department of Mathematics and Statistics, Memorial University of Newfoundland, St. John's, NL, Canada~\ A1C5S7}
\curraddr{Department of Mathematics, University of Wyoming, Laramie, WY, USA~\ 82071}
\email{zniu@uwyo.edu}

\subjclass[2010]{46L35}
\keywords{AH-algebras, mean dimension, dimension growth, Cuntz semigroup.}


\begin{abstract}
Mean dimension for AH-algebras with diagonal maps is introduced. It is shown that if a simple unital AH-algebra with diagonal maps has mean dimension zero, then it has strict comparison on positive elements. In particular, the strict order on projections is determined by traces. Moreover, a lower bound of the mean dimension is given in term of {Toms'} comparison radius. Using classification results, if a simple unital AH-algebra with diagonal maps has mean dimension zero, it must be an AH-algebra without dimension growth.

Two classes of AH-algebras {with diagonal maps} are shown to have mean dimension zero: the class of AH-algebras with at most countably many extremal traces, and the class of AH-algebras with numbers of extreme traces which induce same state on the $\Kzero$-group being uniformly bounded (in particular, this class includes AH-algebras with real rank zero).
%
\end{abstract}

\maketitle

\setcounter{tocdepth}{1}
\tableofcontents

\section{Introduction}
Dimension growth---roughly speaking, the limit of ratios of the dimension of the base space and dimension of the irreducible representation---plays a crucial role in the classification of AH-algebras. On one hand, the class of simple unital AH-algebras without dimension growth is classified by their K-theory invariant (see \cite{EG-RR0AH}, \cite{Gong-AH}, and \cite{EGL-AH}). On the other hand, AH-algebras with fast dimension growth { lack} certain regularities in their invariants (see, for example, \cite{Vill-perf}, \cite{Vill-sr}, \cite{Toms-Ann}).

However, the definition of dimension growth relies on inductive limit decompositions. It is then interesting to study whether one can tell whether a given AH-algebra has slow dimension growth without looking at the inductive sequence itself. For instance, { it is an open question that} whether any real rank zero AH-algebra has slow dimension growth automatically.

In this paper, a positive answer to the question above is given for the class of AH-algebras with diagonal maps (Corollary 4.4) (this class of AH-algebras includes the examples of { Villadsen} in \cite{Vill-perf} and the examples of Toms in \cite{Toms-Ann}). In fact, it is shown that if the tracial simplex of the given AH-algebra with diagonal maps is relatively small, then it must {have} slow dimension growth. More precisely,

\begin{thm}[Corollary \ref{cor-general}]
If a unital simple AH-algebra with diagonal maps has at most countably many extremal tracial states, or if there is $M>0$ such that $\rho^{-1}(\kappa)$ has at most $M$ extreme points for any $\kappa\in\mathrm{S}(\Kzero(A))$, then it is an AH-algebra without dimension growth.
\end{thm}

The main tool in the proof of the theorem above is the mean dimension for AH-algebras, which is {an analog to} the dynamical system mean dimension introduced by Elon Lindenstrauss and Benjamin Weiss in \cite{Lindenstrauss-Weiss-MD}. It is defined as the limit ratio of the dimensions of certain open covers of the base spaces (instead of the dimension of the base spaces themselves) and the dimensions of the irreducible representations (it should be noted that in order to define the mean dimension, the eigenvalue {patterns} of the AH-algebras are assumed to be induced by continuous maps between the base spaces).  As the role played by dimension growth, it also measures the size of the space in which one can maneuver a vector bundle (a projection), but in a much more intrinsic way. 

Like the dimension growth, the definition of the mean dimension relies on the individual inductive limit decomposition. In order to decide whether a certain AH-algebra has mean dimension zero automatically, small boundary properties (Definition \ref{Defn-SBP}, Definition \ref{SBRP}) are also introduced, and it is shown that AH-algebras with these small boundary properties always have mean dimension zero. This enables us to show that any AH-algebra of real rank zero has mean dimension zero; see Theorem \ref{RR0-SBP}. (It is worth to point out that, unlike mean dimension and small boundary property for minimal dynamical systems, which were shown to be equivalent (Theorem 6.2 of \cite{Lind-MD}), there are simple AH-algebras with mean dimension zero but without small boundary property (Remark \ref{MD0>SBP}).)

Then if one only focuses on the class of AH-algebras with diagonal maps, mean dimension gives a nice upper bound for the dimension ratio in the following local approximation theorem.

\begin{thm}[Theorem \ref{LOA}]
Let $A$ be a simple AH-algebra with diagonal maps, then $A$ can be locally approximated by homogeneous C*-algebras with dimension ratio no more than its mean dimension. In particular, if this AH-algebra has mean dimension zero, it has local slow dimension growth. 
\end{thm}

Using this local approximation, AH-algebras with mean dimension zero and diagonal maps are shown to have strict comparison on positive elements (Theorem \ref{MD0-Comp}), and hence the strict order on projections are determined by traces (Theorem \ref{K0-comp}). In particular,  if an AH-algebra with diagonal maps has real rank zero, the strict order on projections must be  determined by traces. Combining with a result of Huaxin Lin in \cite{Lin-AHRR0}, real rank zero AH-algebras with diagonal maps are tracially AF algebra, and therefore they are AH-algebras without dimension growth.

As another application of the above-mentioned approximation theorem, the Cuntz semigroups of AH-algebras with diagonal maps and mean dimension zero are calculated. Together with the recent classification results, these AH-algebras are AH-algebras without dimension growth.

The comparison radius of an AH-algebra with diagonal maps is also calculated. As one expects, it is dominated by one half of its mean dimension (but it is not known if this is also the lower bound).

However, this version of mean dimension only works nicely for AH-algebras with diagonal maps. For instance, the Villadsen algebras of the second type (see \cite{Vill-sr}) always have mean dimension zero, but they cannot have slow dimension growth, even locally. Thus, several modified versions of mean dimension---the Cuntz mean dimension for AH-algebras with generalized diagonal maps, and the variation mean dimension for general AH-algebras---are also introduced. These new versions of mean dimension are able to detect the regularity of the K-theoretical invariant in a {broader} context. For example, AH-algebras with zero variation mean dimension always have strict comparison on positive elements. But not as the mean dimension for AH-algebra with diagonal maps, {it seems that} these versions of mean dimension are very difficult to calculate, and in particular, it is unknown if real rank zero would imply {mean dimension zero} in these contexts. However, for the class of AH-algebras with diagonal maps, all of these definitions agree with each other.

\subsection*{Acknowledgement} The author is greatly indebted to Professor N.~C.~Phillip for informing him the construction of Example \ref{AH-Model} and drawing his attention to the mean dimension during the AIM workshop on Cuntz semigroups in November 2009. This eventually leads to the main work in this paper. The author also thanks Professor G.~A.~Elliott for the discussions during his visit to Memorial University of Newfoundland. The work is supported by a Natural Sciences and Engineering Research Council of Canada (NSERC) Discovery Grant.

\section{Notation and preliminaries}

\begin{defn}\label{CuntzR}
Let $a$ and $b$ be positive elements in a C*-algebra $A$. The element $a$ is said to be Cuntz smaller than {$b$}, denoted by $a\precsim b$, if there exist sequences $(x_n)$ and $(y_n)$ in $A$ such that $$\lim_{n\to\infty} x_nby_n=a.$$ The elements $a$ and $b$ are {said to be} Cuntz equivalent, denoted by $a\sim b$, if $a\precsim b$ and $b\precsim a$. It is an {equivalence} relation.
\end{defn}

{In fact, the elements $x_n$ and $y_n$ in Definition \ref{CuntzR} can be chosen so that $x_n=y_n^*$.}  Moreover, for any positive element $a\in A$ and any $\eps>0$, denoted by $(a-\eps)_+$ the element $h_\eps(a)$, where $h_\eps(t)=\min\{0, t-\eps\}$. One then has the following {facts} on comparison of positive elements $a$ and $b$.
\begin{enumerate}
\item $a\precsim b$ if and only if $(a-\eps)_\eps\precsim b$ for all $\eps>0$.
\item $a\precsim b$ if and only if for any $\eps>0$, there exists $x$ such that $x^*bx=(a-\eps)_\eps$.
\item if $\norm{a-b}<\eps$, then $(a-\eps)_+\precsim b$.
\item $a+b\precsim a\oplus b$ {in $\mathrm{M}_2(A)$}; if $a\perp b$, then $a+b\sim a\oplus b$.
\end{enumerate}
The proofs can be found in \cite{Cuntz-CuGr} and \cite{RorUHF-II}.

\begin{defn}
Let $A$ be a C*-algebra, and consider matrix algebras $\textrm{M}_\infty(A):=\bigcup_n\mathrm{M}_n(A)$ over $A$. {Let $a\in\mathrm{M}_n(A)$ and $b\in\mathrm{M}_m(A)$ be positive elements. Then $a$ is said to be Cuntz smaller than $b$, still denoted by $a\precsim b$, if there are $\{c_n\}\subseteq \mathrm{M}_{m, n}(A)$ such that $c^*_nbc_n\to a$; and $a$ is said to be equivalent to $b$, denoted by $a\sim b$, if $a\precsim b$ and $b\precsim a$}. Then $\sim$ is an equivalent relation on $A_+$. Denoted by $[a]$ the {equivalence} class containing $a$. Then $W(A):=\{[a];\ a\in \textrm{M}_\infty(A)^+\}$ is a positive ordered abelian semigroup with $$[a]+[b]:=[a\oplus b]\quad\textrm{and}\quad [a]\leq[b]\ \textrm{if}\ a\precsim b.$$
\end{defn}

\begin{defn}
Let $a$ be a positive element in a C*-algebra $A$, and let $\tau$ be a state of $A$. Define $$d_{\tau}(a):=\lim_{\eps\to0^+}\tau(f_\eps(a))=\sup_{\eps>0}\tau(f_{\eps}(a))=\mu_\tau((0,\norm{a}]),$$ where 
$f_{\eps}(x):=\min\{x/\eps, 1\}$, and $\mu_\tau$ is the measure induced by $\tau$ on $\textrm{C*}(1, a)\cong\mathrm{C}(\mathrm{sp}(a))$.

{Assume the C*-algebra $A$ is simple, then it} is said to have strict comparison of positive elements if for any positive elements $a$ and $b$ with $$d_\tau(a)< d_\tau(b),\quad\forall \tau\in\mathrm{T}(A),$$ one has that $a\precsim b$.
\end{defn}

Note that the dimension function $d_\tau$ induces a positive map from $W(A)$ to $\mathrm{SAff}(\textrm{T}(A))$, the set of functions on $\textrm{T}(A)$ which are pointwise suprema of increasing sequences of continuous, affine, and strictly positive functions on $\textrm{T}(A)$, equipped with the natural order (\cite{Brn-Toms-3app}). If the C*-algebra $A$ has strict comparison of positive element, then the order structure on $W(A)$ is determined by its image in $\mathrm{SAff}(\textrm{T}(A))$.

\begin{defn}\label{diag-map-defn}
An AH-algebra is an inductive limit of C*-algebras 
\begin{equation}\label{HomBloc}
p\MC{n}{X}p
\end{equation} 
where $X$ is a compact Hausdorff space, and $p$ is a projection in $\MC{n}{X}$. {By \cite{Bl-AH}, any AH-algebra can be written as an inductive limit of C*-algebras in the form of \eqref{HomBloc} with $X$ a CW-complex. Therefore, by written $X$ as a disjoint union of its connected components $X_{1}, X_2, ..., X_h$, one has that any AH-algebra is an inductive limit of the C*-algebras in the form of $$\bigoplus_{l=1}^{h}p_{l}\MC{r_{l}}{X_{l}}p_{l},$$ where each $X_l$ is a connected CW-complex and $p_l$ is a projection in $\textrm{M}_{r_l}(\textrm{C}(X_l))$.}

Consider an AH-algebra $A=\varinjlim_{n\to\infty}(A_i, \varphi_i)$ {with} $A_i=\bigoplus_{l=1}^{h_i}p_{i, l}\MC{r_{i,l}}{X_{i, l}}p_{i, l},$ where each {$X_{i, l}$ is connected}. Denote by $A_{i, l}$ the direct summand $p_{i, l}\MC{r_{i,l}}{X_{i, l}}p_{i, l}$. For any $i, j$, if there exist a unitary $U\in A_j$ such that the restriction of the map $\mathrm{ad}(U)\circ\varphi_{i, j}$ to any direct summands $A_{i, l}$ and $A_{j, k}$ has the form $$f\mapsto\left(
\begin{array}{ccc}
f\circ\lambda_1 & &\\
&\ddots&\\
 && f\circ\lambda_n
\end{array}
\right)
$$
for some continuous maps $\lambda_1,...,\lambda_n : X_{j, k}\to X_{i, l}$, then $A$ is called an AH-algebra with diagonal maps.
\end{defn}

{The following example was informed to the author by N.~C.~Phillips in a private communication during an AIM workshop in 2009. This example is particularly important, and it serves as the main motivation for the work in this paper. This example is also a special case of the construction given by Blackadar and Kumjian in 1.1 of \cite{BlaKum-Simple}.}
\begin{example}[AH-model for dynamical systems]\label{AH-Model}
Let $X$ be a compact Hausdorff space, and let $\sigma$ be a homeomorphism of $X$. The the AH-model of the system $(X, \sigma)$ is the following AH-algebra $A(X, \sigma)$ with diagonal map:
\begin{displaymath}
\xymatrix{
\mathrm{C}(X)\ar[r] & \MC{2}{X} \ar[r] & \MC{2^2}{X} \ar[r] & \cdots
}
\end{displaymath}
with the connecting map induced by$$f\mapsto\left(
\begin{array}{cc}
f &\\
 & f\circ\sigma
\end{array}
\right).
$$

{By 1.3 of \cite{BlaKum-Simple}, the AH-algebra $A(X, \sigma)$ is simple if $(X, \sigma)$ is minimal. On the other hand, if there is a closed nontrivial invariant subset of $X$, it clearly induces a nontrivial closed two-sided ideal of $A(X, \sigma)$.  Moreover, the tracial simplex of $A(X, \sigma)$ is canonically isomorphic to the simplex of the invariant probability measures on $X$.}
\end{example}

\begin{defn}
Let $\displaystyle{A=\varinjlim_{i\to\infty}(A_i, \varphi_i)}$ be an AH-algebra with $\displaystyle{A_i=\bigoplus_{l=1}^{h_i}p_{i, l}\MC{r_{i,l}}{X_{i, l}}p_{i, l}}$, where each $p_{i, l}$ has constant rank. Then the dimension growth of $A$ is defined by $$\liminf_{i\to\infty}\max_l\{\frac{\mathrm{dim}(X_{i, l})}{\mathrm{rank}(p_l)}\},$$ where $\mathrm{dim}(X_{i, l})$ is the covering dimension of $X$. If there is an inductive decomposition of $A$ {with} homogeneous C*-algebras such that the dimension growth is zero, then $A$ is said to have slow dimension growth. Moreover, if the dimensions of the base spaces are uniformly bounded, $A$ is said to be an AH-algebra without dimension growth.
\end{defn}

\begin{rem}
The class of simple unital real rank zero AH-algebra with slow dimension growth has been classified in \cite{EG-RR0AH}, and the class of simple AH-algebra without dimension growth has been classified in \cite{Gong-AH} and \cite{EGL-AH}. {Together with the recent progress in the classification theory (see \cite{Winter-Z}, \cite{Winter-Z-stable-01}, \cite{Winter-Z-stable-02}, \cite{Lnclasn}, \cite{L-N}), it was shown in  \cite{Toms-SDG} that the class of simple unital AH-algebras with slow dimension growth coincides with the class of simple unital AH-algebras without dimension growth.}
\end{rem}

Let $A$ be a unital C*-algebra and let $\tau$ be a tracial state. Recall that the restriction of $\tau$ to projections induces a positive linear functional on $\Kzero(A)$, and this in fact induces an affine map $$\rho: \mathrm{T}(A)\to\mathrm{S}_u(\Kzero(A)),$$ where $\mathrm{S}_u(\Kzero(A))$ is the convex of positive linear functional on $\Kzero(A)$ which send $[1_A]_0$ to $1$. 

{It follows from \cite{BlaTrace} that any element of $\mathrm{S}_u(\Kzero(A))$ comes from a quasi-trace of $A$; and if $A$ is exact, it was shown by Haagerup (\cite{HaagTrace}) that any quasi-trace of $A$ is in fact a trace. Therefore, the map $\rho$ is always surjective when $A$ is exact.}

Note that $\rho$ is injective if and only if projections of $A$ is separated by traces. Recall that a C*-algebra $A$ is real rank zero if {any self-adjoint element of $A$ can be approximated by self-adjoint elements with finite spectrum. In particular, this implies that the linear subspace spanned by the projections of $A$ is dense, and then it is evident that if $A$ has real rank zero, the projections of $A$ separate traces.}

\section{Mean dimension for AH-algebras}

Mean dimension was introduced by E. Lindenstrauss and B. Weiss in \cite{Lindenstrauss-Weiss-MD} for {topological} dynamical systems. In this section, AH-algebras with eigenvalue {patterns induced} by continuous maps are considered, and the mean dimension will be introduced for these AH-algebras.

Let $X$ be a topological space, and let $\alpha$ be a finite open cover of $X$. We say that a cover $\beta$ refines $\alpha$ ($\beta\succ\alpha$) if every element of $\beta$ is a subset of some element of $\alpha$. Denote the set of finite open covers of $X$ by $\mathcal{C}(X)$.

\begin{defn}[Definition 2.1 and Definition 2.2 of \cite{Lindenstrauss-Weiss-MD}]
If $\alpha$ is an open cover of $X$, denote by $$\mathrm{ord}(\alpha):=\max_{x\in X}(\sum_{U\in\alpha}1_{U}(x))-1,$$ and denote by $$\mathcal D(\alpha)=\min_{\beta\succ\alpha}\mathrm{ord}(\beta).$$

A continuous map $f: X\to Y$ is $\alpha$-compatible if it is possible to find a finite open cover of $f(X)$, $\beta$, such that $f^{-1}(\beta)\succ\alpha.$ 
\end{defn}

\begin{rem}
It follows from Proposition 2.3 of \cite{Lindenstrauss-Weiss-MD} that a map $f: X\to Y$ is $\alpha$-compatible if for any $y\in Y$, $f^{-1}(y)$ is a subset of some $U\in\alpha$. 
\end{rem} 

The following proposition is a characterization of $\mathcal{D}(\alpha)$.
\begin{prop}[Proposition 2.4 of \cite{Lindenstrauss-Weiss-MD}]\label{comp-map}
If $\alpha$ is an open cover of $X$, then $$\mathcal{D}(\alpha)\leq k$$ if and only if there is an $\alpha$-compatible continuous function $f: X\to K$ where $K$ has topological dimension $k$.
\end{prop}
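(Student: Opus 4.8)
The plan is to establish the two implications separately, relying on two classical facts from covering-dimension theory: (i) every finite open cover $\beta$ of a normal space admits a subordinate partition of unity, which gives rise to the canonical continuous map into the nerve $N(\beta)$, whose (geometric) dimension equals $\mathrm{ord}(\beta)$; and (ii) a space $Y$ (compact metrizable, as in the setting inherited from \cite{Lindenstrauss-Weiss-MD}) has $\dim Y\le k$ if and only if every finite open cover of $Y$ has a finite open refinement of order at most $k$.

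For the implication $\mathcal{D}(\alpha)\le k\Rightarrow$ existence of the map, I would first use the definition of $\mathcal D$ to choose a finite open cover $\beta=\{U_1,\dots,U_m\}$ of $X$ with $\beta\succ\alpha$ and $\mathrm{ord}(\beta)=\mathcal D(\alpha)\le k$. Fixing a partition of unity $\{\phi_i\}$ with $\{\phi_i>0\}\subseteq U_i$, I would take $f\colon X\to N(\beta)$, $f(x)=\sum_i\phi_i(x)e_{U_i}$, to be the canonical map into the nerve; it is well defined because $x\in\bigcap_{i:\phi_i(x)>0}U_i$, so those vertices span a simplex, it is continuous, and $\dim N(\beta)=\mathrm{ord}(\beta)\le k$. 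For each $i$ the open star $\mathrm{st}(e_{U_i})$ of the vertex $e_{U_i}$ satisfies $f^{-1}(\mathrm{st}(e_{U_i}))=\{x:\phi_i(x)>0\}\subseteq U_i$, so the finite open cover $\{\mathrm{st}(e_{U_i})\cap f(X)\}$ of $f(X)$ pulls back under $f$ to a cover refining $\beta$, hence refining $\alpha$; thus $f$ is $\alpha$-compatible. If $\dim N(\beta)$ happens to be strictly less than $k$, I would replace the target by $N(\beta)\sqcup\Delta$ with $\Delta$ a $k$-simplex, which affects neither the continuity of $f$ nor the $\alpha$-compatibility and yields a target of dimension exactly $k$.

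For the converse, assume $f\colon X\to K$ is $\alpha$-compatible with $\dim K=k$, and pick a finite open cover $\gamma$ of $f(X)$ with $f^{-1}(\gamma)\succ\alpha$. Since $f(X)\subseteq K$ has covering dimension at most $k$, fact (ii) gives a finite open refinement $\gamma'$ of $\gamma$ (still a cover of $f(X)$) with $\mathrm{ord}(\gamma')\le k$. I would then set $\beta:=\{f^{-1}(V):V\in\gamma'\}$: it is a finite open cover of $X$, it refines $f^{-1}(\gamma)$ and hence $\alpha$ by transitivity of refinement, and since $1_{f^{-1}(V)}(x)=1_{V}(f(x))$ one has $\sum_V 1_{f^{-1}(V)}(x)=\sum_V 1_V(f(x))\le\mathrm{ord}(\gamma')+1$ for every $x$, so $\mathrm{ord}(\beta)\le k$; hence $\mathcal D(\alpha)\le\mathrm{ord}(\beta)\le k$. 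The only points requiring care, and the main obstacle, are the standard point-set hypotheses needed to invoke facts (i) and (ii) --- normality for the existence of the partition of unity, and monotonicity of covering dimension under passage to the subspace $f(X)\subseteq K$, for both of which compact metrizability is more than sufficient --- together with the bookkeeping of whether each open cover in play lives on $X$, on $f(X)$, or on $K$, and the harmless padding used to pass from ``dimension at most $k$'' to ``dimension exactly $k$''.
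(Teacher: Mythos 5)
Your argument is correct; note, however, that the paper does not prove this statement at all --- it is quoted verbatim as Proposition 2.4 of \cite{Lindenstrauss-Weiss-MD}, so there is no in-paper proof to compare against. What you give is essentially the standard (and the original) argument: the nerve map associated to a refinement $\beta\succ\alpha$ of minimal order for one direction, and pulling back an order-$\le k$ refinement of a witnessing cover of $f(X)$ for the other (the latter step being exactly the computation of Lemma \ref{dec}), so nothing further is needed.
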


{If $\alpha$ and $\beta$ are two open covers of $X$, then let $\alpha\vee\beta$ denote the open cover of $X$ consisting of $U \cap V$ where $U\in\alpha$ and $V\in\beta$.} It follows from Corollary 2.5 of \cite{Lindenstrauss-Weiss-MD} that
\begin{eqnarray*}
\mathcal{D}(\alpha\vee\beta)&\leq&\mathcal{D}(\alpha)+\mathcal{D}(\beta).
\end{eqnarray*}

\begin{lem}\label{dec}
Let $\alpha$ be an open cover of $Y$, and let $f: X\to Y$ be a continuous map. Then $$\mathcal{D}(f^{-1}(\alpha))\leq\mathcal{D}(\alpha).$$
\end{lem}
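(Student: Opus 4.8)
The plan is to reduce everything to the characterization of $\mathcal{D}(\cdot)$ via $\alpha$-compatible maps provided by Proposition \ref{comp-map}. Set $k=\mathcal{D}(\alpha)$; I want to produce an $f^{-1}(\alpha)$-compatible continuous map from $X$ into a compact space of topological dimension $k$. By Proposition \ref{comp-map} applied to $\alpha$ on $Y$, there is an $\alpha$-compatible continuous map $g\colon Y\to K$ with $\dim K=k$, meaning there is a finite open cover $\beta$ of $g(Y)$ with $g^{-1}(\beta)\succ\alpha$. The natural candidate is then the composite $g\circ f\colon X\to K$.

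The key step is to check that $g\circ f$ is $f^{-1}(\alpha)$-compatible. Using the same cover $\beta$ of $g(f(X))\subseteq g(Y)$, I need $(g\circ f)^{-1}(\beta)\succ f^{-1}(\alpha)$, i.e.\ every set of the form $f^{-1}(g^{-1}(V))$ with $V\in\beta$ is contained in some set $f^{-1}(U)$ with $U\in\alpha$. But since $g^{-1}(\beta)\succ\alpha$, each $g^{-1}(V)$ is contained in some $U\in\alpha$, and applying $f^{-1}$ preserves this inclusion: $f^{-1}(g^{-1}(V))\subseteq f^{-1}(U)$. Hence $(g\circ f)^{-1}(\beta)$ refines $f^{-1}(\alpha)$, so $g\circ f$ is indeed $f^{-1}(\alpha)$-compatible, with target $K$ of dimension $k$. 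Applying the other direction of Proposition \ref{comp-map} gives $\mathcal{D}(f^{-1}(\alpha))\leq k=\mathcal{D}(\alpha)$, as desired.

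There is essentially no serious obstacle here; the only point requiring a little care is bookkeeping about which space the cover $\beta$ lives on — it is a cover of the image $g(Y)$ (or one may restrict further to $g(f(X))$), not of $K$ itself — and making sure the refinement relation is stated for covers of the appropriate subspace so that the preimage $f^{-1}$ and composition $g\circ f$ interact correctly. Once that is set up, the argument is just the observation that $h^{-1}$ is monotone with respect to inclusion for any map $h$, applied to $h=g\circ f$. Alternatively, one could give a direct proof from the definition of $\mathcal{D}$: take $\gamma\succ\alpha$ on $Y$ with $\mathrm{ord}(\gamma)=\mathcal{D}(\alpha)$, and note $f^{-1}(\gamma)\succ f^{-1}(\alpha)$ with $\mathrm{ord}(f^{-1}(\gamma))\leq\mathrm{ord}(\gamma)$ since $\sum_{U\in\gamma}1_{f^{-1}(U)}(x)=\sum_{U\in\gamma}1_U(f(x))$; this is perhaps the cleanest route and avoids invoking Proposition \ref{comp-map} altogether.
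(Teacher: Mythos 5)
Your argument is correct, and in fact you have given two valid proofs. Your primary route — lift $\mathcal{D}(\alpha)$ to an $\alpha$-compatible map $g\colon Y\to K$ with $\dim K=\mathcal{D}(\alpha)$ via Proposition \ref{comp-map}, observe that $g\circ f$ is $f^{-1}(\alpha)$-compatible because $f^{-1}$ preserves the inclusions witnessing $g^{-1}(\beta)\succ\alpha$, and then apply the converse direction of Proposition \ref{comp-map} — is genuinely different from what the paper does, though it is sound; the only care needed is exactly the bookkeeping you flag about $\beta$ being a cover of the image $g(f(X))$. The paper instead takes the direct route you sketch in your final sentence: choose $\beta\succ\alpha$ with $\mathrm{ord}(\beta)=\mathcal{D}(\alpha)$, note $f^{-1}(\beta)\succ f^{-1}(\alpha)$, and bound $\mathrm{ord}(f^{-1}(\beta))$ by $\mathrm{ord}(\beta)$ using the pointwise identity $\sum_{V\in f^{-1}(\beta)}1_V(x)=\sum_{U\in\beta}1_U(f(x))$ — so your ``alternative'' is precisely the paper's proof. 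The direct argument is preferable here: it is self-contained, uses only the definition of $\mathcal{D}$, and avoids importing the compatible-map characterization, whereas your main route buys nothing extra for this particular lemma (Proposition \ref{comp-map} is itself proved by essentially the same nerve/partition-of-unity machinery). Both are fine; the paper simply chose the shorter path.
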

\begin{proof}
Let $\beta$ be an open cover with $\beta\succ\alpha$ with $\mathrm{ord}(\beta)=\mathcal{D}(\alpha)$. Consider the open cover $f^{-1}(\beta)$. It is clear that $f^{-1}(\beta)\succ f^{-1}(\alpha)$. Pick $x\in X$ such that $$\mathrm{ord}(f^{-1}(\beta))=\sum_{V\in f^{-1}(\beta)} 1_{V}(x)-1.$$ However, since
\begin{eqnarray*}
\sum_{V\in f^{-1}(\beta)} 1_{V}(x)&=&\sum_{U\in \beta} 1_{U}(f(x)),
\end{eqnarray*}
one has
\begin{eqnarray*}
\mathcal{D}(f^{-1}(\alpha))&\leq&\mathrm{ord}(f^{-1}(\beta))\\
&=&\sum_{V\in f^{-1}(\beta)} 1_{V}(x)-1\\
&=&\sum_{U\in \beta} 1_{U}(f(x))-1\\
&\leq&\mathrm{ord}(\alpha),
\end{eqnarray*}
as desired.
\end{proof}

Consider the unital homogeneous C*-algebras $$A_i=\bigoplus_{l=1}^{h_i}p_{i, l}\MC{r_{i,l}}{X_{i, l}}p_{i, l},$$ {where each $X_{i, l}$ is a connected compact Hausdorff space and each $p_{i, l}$ is a projection in $\MC{r_{i,l}}{X_{i, l}}$}. Denote by $n_{i, l}$ the rank of $p_{i, l}$, {and denote} by $A_{i, l}$ the $l$th component of $A_i$. Consider the family of homomorphisms $\varphi_{i_1, i_3}: {A_{i_1}\to A_{i_3}}$ with $\varphi_{i_1, i_3}=\varphi_{i_2, i_3}\circ\varphi_{i_1, i_2}$ for any $i_1\leq i_2\leq  i_3$. {Denote by $m_{i, j}^{l, k}$ the multiplicity of the restriction of the map $\varphi_{i, j}$ to $A_{i, l}$ and $A_{j, k}$, and note that 
$$n_{j, k}=\sum_{l=1}^{h_i}m_{i, j}^{l, k}n_{i, l}.$$}

{As in \cite{Gong-AH}, for any homomorphism $\varphi: A\to B$, where $A$ and $B$ are homogeneous C*-algebras, and for any $x$ in the spectrum of $B$, let $\mathrm{SP}\varphi_y$ denote the subset of the spectrum of $A$ which corresponds to $\pi_x\circ\varphi$, where $\pi_x$ is the evaluation map at $x$.
Then, in this paper, let us assume that the eigenvalue pattern $$\Gamma_{i, j}^{k, l}: X_{j, k}\ni x\mapsto \mathrm{SP}(\varphi_{i, j}|_{A_{i, l}})_x \in \prod_{l=1}^{m_{i, j}^{l, k}} X_{i, l}/\sim$$ of the restriction of $\varphi_{i, j}$ to $A_{i, l}$ and $A_{j, k}=p_{j, k}\MC{r_{j,k}}{X_{j, k}}p_{j, k}$ is alway induced by the continuous maps $\{\lambda_{i, j}^{l, k}(m);\ 1\leq m\leq m_{i, j}^{l,k} \}$, that is $$\mathrm{SP}(\varphi_{i, j}|_{A_{i, l}})_x=\{\lambda_{i, j}^{l, k}(1)(x), ..., \lambda_{i, j}^{l, k}(m_{i, j}^{l, k})(x)\},\quad\forall x.$$}

Consider the unital inductive limit of {$(A_i, \varphi_i)$}:
\begin{displaymath}
\xymatrix{
A_1\ar[r]^-{\varphi_{1,2}}&A_2\ar[r]^-{\varphi_{2, 3}}&\cdots\ar[r]&A=\varinjlim A_i.
}
\end{displaymath}

Let $\alpha_{i, l}$ be an open cover of $X_{i, l}$. Consider the open cover $$(\lambda_{i, j}^{l, k}(1))^{-1}(\alpha_{i, l})\vee\cdots\vee (\lambda_{i, j}^{l, k}(m_{i, j}^{l, k}))^{-1}(\alpha_{i, l}),$$ and denoted by $\varphi_{i, j}^{l, k}(\alpha_{i, l})$.

Let $\alpha_i$ be an open cover of $X_i$ with each member a subset of a connected component. Denote by $\alpha_{i, l}$ the cover of $X_{i, l}$ induced by $\alpha_i$. Then set $$\varphi_{i, j}^k(\alpha_i)=\varphi_{i, j}^{1, k}(\alpha_{i, 1})\vee\cdots\vee \varphi_{i, j}^{h_i, k}(\alpha_{i, h_i}).$$

Consider $\mathcal{D}(\varphi_{i, j}^k(\alpha_{i}))$. One then has
\begin{eqnarray*}
\mathcal{D}(\varphi_{i, j}^k(\alpha_i))&=&\mathcal{D}(\varphi_{i, j}^{1, k}(\alpha_{i, 1})\vee\cdots\vee \varphi_{i, j}^{h_i, k}(\alpha_{i, h_i}))\\
&\leq&\mathcal{D}(\varphi_{i, j}^{1, k}(\alpha_{i, 1}))+\cdots +\mathcal{D}(\varphi_{i, j}^{h_i, k}(\alpha_{i, h_i}))\\
&=&\sum_{l=1}^{h_i}\mathcal{D}((\lambda_{i, j}^{l, k}(1))^{-1}(\alpha_{i, l})\vee\cdots\vee (\lambda_{i, j}^{l, k}(m_{i, j}^{l, k}))^{-1}(\alpha_{i, l}))\\
&\leq&\sum_{l=1}^{h_i}\sum_{m=1}^{m_{i, j}^{l, k}}\mathcal{D}((\lambda_{i, j}^{l, k}(m))^{-1}(\alpha_{i, l})\\
&\leq&\sum_{l=1}^{h_i}\sum_{m=1}^{m_{i, j}^{l, k}}\mathcal{D}(\alpha_{i, l})\quad\textrm{by Lemma \ref{dec}}\\
&=&\sum_{l=1}^{h_i}m_{i, j}^{l, k}\mathcal{D}(\alpha_{i, l}),
\end{eqnarray*}
and hence

\begin{equation}\label{0001}
\frac{\mathcal{D}(\varphi_{i, j}^k(\alpha_i))}{n_{j, k}}\leq\frac{\sum_{l=1}^{h_i}m_{i, j}^{l, k}\mathcal{D}(\alpha_{i, l})}{n_{j, k}}
=\frac{\sum_{l=1}^{h_i}m_{i, j}^{l, k}\mathcal{D}(\alpha_{i, l})}{\sum_{l=1}^{h_i}m_{i, j}^{l, k}n_{i, l}}.
\end{equation}

\begin{lem}
Let $a_1,...,a_n$, $b_1,..., b_n$, $m_1, ..., m_n$ be positive numbers with $b_i$ and $m_i$ nonzero. If $a_i/b_i\leq c$ for some number $c$ for any $1\leq i\leq n$, then 
$$\frac{m_1a_1+\cdots +m_na_n}{m_1 b_1+\cdots +m_nb_n}\leq c.$$
\end{lem}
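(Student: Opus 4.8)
The plan is to reduce the weighted inequality to the pointwise hypothesis by clearing denominators, a standard mediant-type argument. First I would rewrite the hypothesis $a_i/b_i \le c$ in the form $a_i \le c\,b_i$ for each $i$; this is legitimate because each $b_i$ is a (strictly) positive number, so multiplying the inequality through by $b_i$ preserves its direction.

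Next I would multiply each of these inequalities by the corresponding positive scalar $m_i$, obtaining $m_i a_i \le c\, m_i b_i$ for $1 \le i \le n$, and then sum over $i$ to get
$$\sum_{i=1}^{n} m_i a_i \;\le\; c \sum_{i=1}^{n} m_i b_i.$$
Finally, since every $m_i$ and every $b_i$ is positive and $n \ge 1$, the quantity $\sum_{i=1}^n m_i b_i$ is strictly positive, so I may divide both sides by it without changing the inequality, which yields exactly
$$\frac{m_1 a_1 + \cdots + m_n a_n}{m_1 b_1 + \cdots + m_n b_n} \;\le\; c,$$
as desired.

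There is essentially no obstacle here: the only points requiring care are the sign conditions, and these are guaranteed by the hypotheses that the $b_i$ and $m_i$ are positive (and nonzero), which is precisely why the lemma is stated with those assumptions. The $a_i$ being nonnegative is not even needed for the argument, though it is consistent with the intended application in \eqref{0001}, where the roles of $a_i$, $b_i$, $m_i$ are played by $\mathcal{D}(\alpha_{i,l})$, $n_{i,l}$, and $m_{i,j}^{l,k}$ respectively.
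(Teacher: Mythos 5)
Your proof is correct and is essentially identical to the paper's: clear denominators to get $a_i \le c\,b_i$, multiply by $m_i$, sum, and divide by the positive sum $\sum_i m_i b_i$. Nothing further to add.
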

\begin{proof}
Since $a_i/b_i\leq c$, one has that $a_i\leq cb_i$, and hence $m_ia_i\leq cm_ib_i$ for all $1\leq i\leq n$. Therefore $$m_1a_1+\cdots+m_na_n\leq cm_1b_1+\cdots+cm_nb_n=c(m_1b_1+\cdots+m_nb_n),$$ and $$\frac{m_1a_1+\cdots +m_na_n}{m_1 b_1+\cdots +m_nb_n}\leq c,$$ as desired.
\end{proof}

From the lemma above and Equation \eqref{0001}, one has $$\frac{\mathcal{D}(\varphi_{i, j}^k(\alpha_i))}{n_{j, k}}\leq\max\{\frac{\mathcal{D}(\alpha_{i, l})}{n_{i, l}};\ 1\leq l\leq h_i\}.$$ Thus, the sequence $$\max\{\frac{\mathcal{D}(\varphi_{i, j}^k(\alpha_{i}))}{n_{j, k}};\ 1\leq k\leq h_j\},\quad j=i, i+1, ...$$ is a decreasing sequence, and the limit exists.

\begin{defn}\label{mdim}
Set $$\gamma_i(A):=\sup_{\alpha_i\in\mathcal{C}(X_i)}\lim_{j\to\infty}\max\{\frac{\mathcal{D}(\varphi_{i, j}^k(\alpha_{i}))}{n_{j, k}};\ 1\leq k\leq h_j\}.$$ Note that $\{\gamma_n(A)\}$ is an increasing sequence. The mean dimension of the inductive limit $A$ is the limit $$\gamma(A)=\lim_{i\to\infty}\gamma_i(A).$$
\end{defn}

\begin{rem}
{Consider the AH-algebras in Example \ref{AH-Model}. They are AH-algebras with diagonal maps, and the eigenvalue maps are $\{\sigma^0, \sigma^1, ..., \sigma^{2^n}\}$ for some $n$. It follows from the definition directly that the mean dimension of the AH-model $A(X, \sigma)$ is equal to the dynamical mean dimension of $(X, \sigma)$ introduced by Lindenstrauss and Weiss in \cite{Lindenstrauss-Weiss-MD}. The author is indebted to N.~C.~Phillips for informing him the construction of Example \ref{AH-Model}, which leads to the definition above and eventually leads to the main work in this paper.}
\end{rem}

\begin{rem}\label{AH-SD-MD0}
Since $\mathcal{D}(\varphi_{i, j}^k(\alpha_{i}))\leq\mathrm{dim}(X_{j, k})$, it is clear that if an AH-algebra has slow dimension growth is zero, then it has mean dimension zero.
\end{rem}

Let $f$ be a positive element in $A_i$. Then for any $x\in X_{j, k}$, one has
\begin{eqnarray*}
\mathrm{Tr}(\varphi_{i, j}^k(f)(x))&=&\sum_{l=1}^{h_i}\sum_{m=1}^{m_{i, j}^{l, k}}\mathrm{Tr}(f(\lambda_{i, j}^{l, k}(m)(x)))\\
&\leq &\sum_{l=1}^{h_i}\sum_{m=1}^{m_{i, j}^{l, k}}\sup_{y\in X_{i, l}}\mathrm{Tr}(f(y))\\
&=&\sum_{l=1}^{h_i}m_{i, j}^{l, k}\sup_{y\in X_{i, l}}\mathrm{Tr}(f(y)).
\end{eqnarray*}
Hence $$\sup_{x\in X_{j, k}}\mathrm{Tr}(\varphi_{i, j}^k(f)(x))\leq \sum_{l=1}^{h_i}m_{i, j}^{l, k}\sup_{y\in X_{i, l}}\mathrm{Tr}(f(y)),$$ and

\begin{eqnarray*}
\frac{1}{n_{j, k}}\sup_{x\in X_{j, k}}\mathrm{Tr}(\varphi_{i, j}^k(f)(x))&\leq&\frac{\sum_{l=1}^{h_i}m_{i, j}^{l, k}\sup_{y\in X_{i, l}}\mathrm{Tr}(f(y))}{\sum_{l=1}^{h_i}m_{i, j}^{l, k}n_{i, l}}\\
&\leq&\max_{1\leq l \leq h_i}\frac{1}{n_{i, l}}\sup_{y\in X_{i, l}}\mathrm{Tr}(f(y)).
\end{eqnarray*}
Thus, the sequence $$\{\max_{1\leq k\leq h_j} \sup_{x\in X_{j, k}}\frac{1}{n_{j, k}}\mathrm{Tr}(\varphi_{i, j}^k(f)(x)),\ \quad j=i, i+1, ...\}$$ is decreasing, and the limit exists.

\begin{defn}
Let $f\in (A_{i, l})^+$, and let $E$ be a closed subset of $X_{i, l}$. For any $X_{j, k}$ with $j\geq i$, the orbit capacities of $f$ and $E$ at $X_{j, k}$, denoted by $\textrm{ocap}_{j, k}(f)$ and $\textrm{ocap}_{j, k}(E)$ respectively, are 
$$\mathrm{ocap}_{j, k}(f):=\sup_{x\in X_{j, k}}\frac{1}{n_{j, k}}\mathrm{Tr}(\varphi_{i, j}^k(f)(x)),$$
and 
$$\mathrm{ocap}_{j, k}(E):=\sup_{x\in X_{j, k}}\frac{n_{i, l}}{n_{j, k}}\#\{1\leq m\leq m_{i, j}^{l, k};\ \lambda_{i, j}^{l, k}(m)(x)\in E\}.$$

The orbit capacity of $f$ and $E$, denote by $\mathrm{ocap}(f)$ and $\mathrm{ocap}(E)$ respectively, is 
$$\mathrm{ocap}(f):=\lim_{j\to\infty}\max_{1\leq k\leq h_j}\textrm{ocap}_{j, k}(f),$$ and  
$$\mathrm{ocap}(E):=\lim_{j\to\infty}\max_{1\leq k\leq l_j}\mathrm{ocap}_{j, k}(E).$$
%
\end{defn}

\begin{rem}
For any closed set $E\subseteq X_{i, l}$, one has $$\mathrm{ocap}(E)\leq\inf\{\mathrm{ocap}(f);\ f\ \textrm{is a positive element in the center of $A_i$ and $f|_{E}={p_{i, l}}$} \}.$$
{Indeed, let $f$ be an arbitrary positive element in the center of $A_i$, then $f=\alpha \cdot p_{i, l}$ for some continuous function $\alpha: X_i\to [0, +\infty)$ (since any central element is sent to a scalar multiple of the identity under any irreducible representation). Assuming that the restriction of $f$ to $E$ is $p_{i, l}$, then the restriction of $\alpha$ to $E$ is the constant function $1$. Therefore, for any $j, k$, one has
\begin{eqnarray*}
\mathrm{ocap}_{j, k}(f) & = & \sup_{x\in X_{j, k}}\frac{1}{n_{j, k}}\mathrm{Tr}(\varphi_{i, j}^k(f)(x)) \\
&=& \sup_{x\in X_{j, k}}\frac{1}{n_{j, k}} \sum_{l=1}^{h_i}\sum_{m=1}^{m_{i, j}^{l, k}}\mathrm{Tr}(f(\lambda_{i, j}^{l, k}(m)(x))) \\
&\geq & \sup_{x\in X_{j, k}}\frac{\mathrm{rank}(p_{i, l})}{n_{j, k}}\cdot \#\{1\leq m\leq m_{i, j}^{l, k};\ \lambda_{i, j}^{l, k}(m)(x)\in E\}  \\
&= & \mathrm{ocap}_{j, k}(E),
\end{eqnarray*}
and hence $\mathrm{ocap}(E)\leq \mathrm{ocap}(f)$.

}
\end{rem}

\begin{rem}\label{tr-ocap}
For any $f\in (A_{i, l})^+$, one has $$\mathrm{ocap}(f)=\sup_{\tau\in\mathrm{T}(A)}\tau(\varphi_{i, \infty}(f)).$$ Indeed, using a compactness argument,  it is clear that $$\mathrm{ocap}(f)\leq\sup_{\tau\in\mathrm{T}(A)}\tau(\varphi_{i, \infty}(f)).$$ On the other hand, let $\tau$ be a tracial state with $\tau(\varphi_{i, \infty}(f))$ maximum. Let $\{\mu_j\}$ be the sequence of probability measures with $\mu_j$ supported on $X_j$ induced by the restriction of $\tau$. Then,
\begin{eqnarray*}
\tau_{\mu_j}(\varphi_{i, j}(f))&=&\frac{1}{n_{j, 1}}\int_{X_{j, 1}}\mathrm{Tr}(\varphi_{i, j}^1(f))d\mu_j^1+\cdots+\frac{1}{n_{j, h_j}}\int_{X_{j, 1}}\mathrm{Tr}(\varphi_{i, j}^{h_j}(f))d\mu_j^{h_j}\\
&\leq&\frac{1}{n_{j, 1}}\sup_{x\in X_{j, 1}}\mathrm{Tr}(\varphi_{i, j}^1(f)(x))\abs{\mu_j^1}+\cdots+\frac{1}{n_{j, h_j}}\sup_{x\in X_{j, 1}}\mathrm{Tr}(\varphi_{i, j}^{h_j}(f)(x))\abs{\mu_j^{h_j}}\\
&\leq& \max_{1\leq k\leq h_j}\sup_{x\in X_{j, k}}\frac{1}{n_{j, k}}\mathrm{Tr}(\varphi_{i, j}(f)(x)),
\end{eqnarray*}
{where $\mu_j^k$, $1\leq k\leq h_j$, are the restrictions of $\mu_j$ to $X_{j, k}$.}
Hence $$\sup_{\tau\in\mathrm{T}(A)}\tau(\varphi_{i, \infty}(f))=\tau(\varphi_{i, \infty}(f))\leq\mathrm{ocap}(f).$$
\end{rem}

\begin{defn}\label{Defn-SBP}
An AH-algebra $A$ has the small boundary property (SBP) if for any $X_{i, l}$, any point $x\in X_{i, l}$, any open $U\ni x$, there is a neighbourhood $V\subseteq U$ such that $\mathrm{ocap}(\partial V)=0$.
\end{defn}

\begin{rem}
The small boundary property for AH-algebra is an analog to the small boundary property for dynamical systems introduced in \cite{Lindenstrauss-Weiss-MD} by Lindenstrauss and Weiss.
\end{rem}

\begin{rem}\label{rem-sbp}
If an AH-algebra has (SBP), using a compactness argument, one has the following: for any $\eps>0$, any $X_{i, l}$, any point $x\in X_{i, l}$, any open $U\ni x$, there is a neighbourhood $V\subseteq U$ and $\delta>0$ such that $\mathrm{ocap}(\textrm{B}(\partial V, \delta))<\eps$, where $\textrm{B}(\partial V, \delta)$ is the closed $\delta$-neighbourhood of $\partial V$.

Indeed, since $A$ has (SBP), there exists $j>i$ such that $$\textrm{ocap}_{j, k}(\partial V)<\eps.$$ Then for each $x\in X_{j, k}$, there is a neighbourhood $U\ni x$ and $\delta_U>0$ such that
$$\frac{n_{i, l}}{n_{j, k}}\#\{1\leq m\leq m_{i, j}^{l, k};\ \lambda_{i, j}^{l, k}(m)(x)\in \textrm{B}(\partial V, \delta_U)\}<\eps,\quad\textrm{for any $x\in U$}.$$
Since $X_{j, k}$ is compact, there is $\delta_k>0$ such that 
$$\frac{n_{i, l}}{n_{j, k}}\#\{1\leq m\leq m_{i, j}^{l, k};\ \lambda_{i, j}^{l, k}(m)(x)\in \textrm{B}(\partial V, \delta_k)\}<\eps,\quad\textrm{for any $x\in X_{j, k}$}.$$

Applying this for any $X_{j, k}$, there is $\delta>0$ such that $$\textrm{ocap}_{j, k}(\textrm{B}(\partial V, \delta))<\eps,\quad\textrm{for any $k$}$$ and hence $$\textrm{ocap}(\textrm{B}(\partial V, \delta))<{\eps}.$$ In particular, $$\mu_{\tau}(\textrm{B}(\partial V, \delta))<\eps$$ for any $\tau\in\textrm{T}(A)$.
\end{rem}

Using Remark \ref{tr-ocap} and a cardinality argument, one immediately has the following lemma.
\begin{lem}\label{baby-lisa}
{If $A$ has at most countably many extremal tracial states, then $A$ has the SBP.}
\end{lem}

\begin{defn}\label{SBRP}
An AH-algebra $A$ has small boundary refinement property (SBRP) if for any $X_{i, l}$, any finite open cover $\alpha$ of  $X_{i, l}$, and any $\eps>0$, there exists $L$ such that for any $X_{j, k}$ with $j>L$, there is a refinement $\alpha'\succ\alpha$ and $\delta>0$ (both may depend on $X_{j, k}$) such that there is a one-to-one correspondence between the elements $U$ of $\alpha$ and $U'$ of $\alpha'$ with $\overline{U'}\subseteq U$, and $$\mathrm{ocap}_{j, k}(\bigcup_{U'\in\alpha'}\textrm{B}(\partial U', \delta))<\eps,$$ where $\textrm{B}(\partial U', \delta)$ is the closed $\delta$-neighbourhood of $\partial U'$.
\end{defn}

\begin{lem}\label{judy}
If an AH-algebra has SBP, then it has SBRP.
\end{lem}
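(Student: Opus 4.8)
The plan is to reduce SBRP to a local statement about single points and then patch using a partition-of-unity/shrinking argument. Fix a component $X_{i,l}$, a finite open cover $\alpha=\{U_1,\dots,U_s\}$ of $X_{i,l}$, and $\eps>0$. Since the SBP controls only boundaries of small neighbourhoods of points, I first want to pass from $\alpha$ to a cover whose elements have small boundary capacity. The key step: for each point $x\in X_{i,l}$, use SBP to find a neighbourhood $V_x$ of $x$ contained in some element of $\alpha$ with $\mathrm{ocap}(\partial V_x)=0$. By compactness extract a finite subcover $V_{x_1},\dots,V_{x_N}$; each $V_{x_n}$ sits inside some $U_{\pi(n)}$. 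Then for each $U_r\in\alpha$ set $W_r=\bigcup_{\pi(n)=r}V_{x_n}$ (empty unions replaced by anything harmless), so $\alpha''=\{W_r\}$ is still an open cover with $W_r\subseteq U_r$, it is indexed in one-to-one fashion by $\alpha$, and $\partial W_r\subseteq\bigcup_{\pi(n)=r}\partial V_{x_n}$, hence $\mathrm{ocap}(\partial W_r)=0$ by subadditivity of $\mathrm{ocap}$ over the finitely many $n$ (using that $\mathrm{ocap}$ of a finite union is bounded by the sum, which follows directly from the definition of $\mathrm{ocap}_{j,k}(E)$ as a supremum of a counting function).

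Next I would shrink once more to get the \emph{closed} containment and the $\delta$-collar demanded by the definition. Since $X_{i,l}$ is compact Hausdorff (hence normal) and $\{W_r\}$ is a finite open cover, there is a shrinking $\{U'_r\}$ with $\overline{U'_r}\subseteq W_r\subseteq U_r$ and $\{U'_r\}$ still a cover. Now $\partial U'_r\subseteq \overline{U'_r}\setminus U'_r\subseteq W_r\setminus U'_r$; the subtle point is that $\partial U'_r$ need not be contained in $\partial W_r$, so I cannot directly invoke $\mathrm{ocap}(\partial W_r)=0$. To handle this I would instead choose the shrinking more carefully: for each $r$ pick a continuous $g_r:X_{i,l}\to[0,1]$ with $g_r=1$ on the complement of $W_r$ and $g_r=0$ on a slightly smaller closed set, set $U'_r=\{g_r<1\}$ — but a cleaner route is to take $U'_r$ with $\overline{U'_r}\subseteq W_r$ and use that the \emph{closed} set $\overline{U'_r}\setminus U'_r$ still lies in the open set $W_r$, on which $\mathrm{ocap}$ of the boundary is zero; more precisely one replaces $W_r$ from the start by a $V_x$-type neighbourhood whose entire closure has capacity-zero boundary, and observes $\partial U'_r$ together with a small collar $\mathrm{B}(\partial U'_r,\delta)$ can be forced inside $W_r\setminus\overline{U''_r}$ for an even smaller shrinking $U''_r$, where the closure of that annular region has $\mathrm{ocap}=0$ by monotonicity once one knows $\mathrm{ocap}(\overline{W_r}\setminus W_r)=0$ — which is exactly what a careful application of SBP at each point gives if one takes the $V_x$ with $\overline{V_x}$ also having small-boundary. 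I will carry out this step by first upgrading SBP to: every point has arbitrarily small neighbourhoods $V$ with $\mathrm{ocap}(\overline V)$ comparable to $\mathrm{ocap}(\partial V)=0$ on the annulus, then the shrinking lemma applies cleanly.

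Finally I would extract the quantitative ``$\exists L$, $\forall j>L$'' form. Because $\mathrm{ocap}(E)=\lim_{j\to\infty}\max_k\mathrm{ocap}_{j,k}(E)$ and this is a decreasing limit, $\mathrm{ocap}(\bigcup_r\mathrm{B}(\partial U'_r,\delta))=0$ means that for the fixed finite data $(\{U'_r\},\delta)$ there is $L$ with $\max_k\mathrm{ocap}_{j,k}(\bigcup_r\mathrm{B}(\partial U'_r,\delta))<\eps$ for all $j>L$. Here I use that $\mathrm{ocap}$ of the finite union of collars is bounded by the sum of the $\mathrm{ocap}$'s of the individual collars, each of which is $\mathrm{ocap}(\mathrm{B}(\partial U'_r,\delta))\le\mathrm{ocap}(\partial U'_r)=0$ once $\delta$ is taken small — and here the genuinely delicate point hides: $\mathrm{ocap}$ need not be continuous from above along decreasing closed sets $\mathrm{B}(\partial U'_r,\delta)\downarrow\partial U'_r$. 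I expect this to be the main obstacle, and I would resolve it by the standard trick: choose a positive $f_r\in A_{i,l}$ with $f_r=1$ on $\partial U'_r$ and $\mathrm{supp}\,f_r$ in an arbitrarily small neighbourhood, apply the Remark that $\mathrm{ocap}(\partial U'_r)\le\inf\{\mathrm{ocap}(f_r)\}$ together with the trace characterisation $\mathrm{ocap}(f_r)=\sup_\tau\tau(\varphi_{i,\infty}(f_r))$ from Remark \ref{tr-ocap}, so that a single small $f_r$ already witnesses $\mathrm{ocap}(\mathrm{B}(\partial U'_r,\delta))<\eps/s$ for all sufficiently small $\delta$; summing over $r$ and taking the worst $L$ over the $s$ components finishes the proof.
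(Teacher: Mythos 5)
Your core construction --- apply the SBP at every point to obtain small-boundary neighbourhoods subordinate to $\alpha$, pass to a finite subcover, and group the pieces into unions $W_r\subseteq U_r$ indexed by the elements of $\alpha$, using finite subadditivity of $\mathrm{ocap}$ over $\partial W_r\subseteq\bigcup_n\partial V_{x_n}$ --- is exactly the paper's (two-sentence) proof. The two places where you struggle, however, deserve comment, and one of them contains a genuine misstep.

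First, your entire second paragraph is unnecessary: to get $\overline{U'_r}\subseteq U_r$ you do not need a second shrinking at all. Apply the SBP to the pair $(x,U'')$, where $U''$ is chosen with $x\in U''\subseteq\overline{U''}\subseteq U_{\pi(n)}$ (normality of the compact base space); the resulting $V_{x_n}$ then satisfies $\overline{V_{x_n}}\subseteq U_{\pi(n)}$ automatically, the finite unions $W_r$ inherit $\overline{W_r}\subseteq U_r$, no new boundaries are created, and the circular discussion of shrinkings disappears.

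Second, the ``main obstacle'' of your third paragraph is not where you locate it, and your proposed repair is flawed as written. Definition \ref{SBRP} allows $\delta$ to depend on $X_{j,k}$; so after choosing $L$ with $\max_k\mathrm{ocap}_{j,k}(\bigcup_r\partial W_r)<\eps$ for all $j>L$ (possible because the limit defining $\mathrm{ocap}$ is decreasing), you only need, for each \emph{fixed} $(j,k)$, that $\mathrm{ocap}_{j,k}(\mathrm{B}(\bigcup_r\partial W_r,\delta))\downarrow\mathrm{ocap}_{j,k}(\bigcup_r\partial W_r)$ as $\delta\downarrow0$. This is an elementary compactness fact: there are finitely many eigenvalue maps $\lambda_{i,j}^{l,k}(m)$, the counting function takes finitely many values, and a net of points witnessing a persistent excess would accumulate at a point violating the bound on the closed set $\bigcup_r\partial W_r$. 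By contrast, your $f_r$-based fix invokes the inequality $\mathrm{ocap}(E)\leq\inf\{\mathrm{ocap}(f):f|_E=1\}$ from the remark preceding Remark \ref{tr-ocap}, but that inequality points the wrong way: it bounds $\mathrm{ocap}(\partial U'_r)$ \emph{from above} by $\mathrm{ocap}(f_r)$ and gives you no mechanism for producing an $f_r$ with $\mathrm{ocap}(f_r)$ small from the hypothesis $\mathrm{ocap}(\partial U'_r)=0$. Salvaging that route requires the reverse comparison (weak-$*$ limits of the measures induced by traces plus a Dini-type argument on $\mathrm{T}(A)$), which is not supplied by anything you cite and is in any case heavier than the fixed-$(j,k)$ argument above.
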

\begin{proof}
Using the small boundary property, there is a cover of $X_{i, l}$ by open sets with small boundary that refines $\alpha$. Then, by taking union of these sets, there is a refinement of $\alpha$ satisfying Definition \ref{SBRP}.
\end{proof}

\begin{thm}\label{nancy}
If a simple AH-algebra $A$ has SBRP, then $A$ has mean dimension zero.
\end{thm}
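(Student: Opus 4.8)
The plan is to show that for each fixed $i$ and each finite open cover $\alpha_i$ of $X_i$, the quantity $\lim_{j\to\infty}\max_k \mathcal{D}(\varphi_{i,j}^k(\alpha_i))/n_{j,k}$ vanishes, which by Definition \ref{mdim} forces $\gamma(A)=0$. Fix $\eps>0$. The key idea is that SBRP lets us replace $\alpha_i$, on each target block $X_{j,k}$ (for $j$ large), by a refinement $\alpha_i'$ whose members have boundaries so thin that their total orbit capacity at $X_{j,k}$ is less than $\eps$; a cover with a thin boundary can be ``collapsed'' to a low-dimensional complex via an $\alpha_i'$-compatible map, and then Proposition \ref{comp-map} bounds $\mathcal{D}$. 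So the scheme is: (1) invoke SBRP to get, for $j>L$, a refinement $\alpha_i'\succ\alpha_i$ with $\overline{U'}\subseteq U$ and $\mathrm{ocap}_{j,k}(\bigcup_{U'}\mathrm{B}(\partial U',\delta))<\eps$; (2) build an $\alpha_i'$-compatible continuous map $g\colon X_{i}\to K$ into a complex $K$ whose ``bad'' part — the image of the union of boundary neighbourhoods — has small relative rank once pulled back along the eigenvalue maps, while off the boundary neighbourhoods the map is locally constant into the nerve; (3) push this forward through $\varphi_{i,j}^k$ and estimate $\mathcal{D}(\varphi_{i,j}^k(\alpha_i))$.

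In more detail for step (2)–(3): since each $U'$ of $\alpha_i'$ has $\overline{U'}\subseteq U$, the collection $\{U'\}$ together with the open set $W:=\bigcup_{U'}\mathrm{B}(\partial U',\delta)^{\circ}$ is an open refinement-type structure on $X_i$; on the complement of $W$ one can define a map to the nerve of $\{U'\}$ that is locally constant (each point lying in a unique ``core''), hence factors through a $0$-dimensional piece, while $W$ maps into a complex of dimension at most $\dim X_i$. Now pull back along $\lambda_{i,j}^{l,k}(m)$: for a point $x\in X_{j,k}$, the number of eigenvalue maps landing in $W$ is, by construction, controlled by $n_{j,k}\,\mathrm{ocap}_{j,k}(W)/n_{i,l}<\eps n_{j,k}/n_{i,l}$, so the contribution of the $W$-part to the order of $\varphi_{i,j}^k(\alpha_i')$ is at most (roughly) $\eps n_{j,k}\cdot\dim X_i$, while the rest contributes $0$ because it is a pullback of a $0$-dimensional map (using Lemma \ref{dec} and subadditivity of $\mathcal{D}$ under $\vee$, exactly as in the computation preceding Definition \ref{mdim}). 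Since $\varphi_{i,j}^k(\alpha_i')\succ\varphi_{i,j}^k(\alpha_i)$, we get $\mathcal{D}(\varphi_{i,j}^k(\alpha_i))\le\mathcal{D}(\varphi_{i,j}^k(\alpha_i'))\le \eps\,n_{j,k}\dim X_i+o(n_{j,k})$, hence $\mathcal{D}(\varphi_{i,j}^k(\alpha_i))/n_{j,k}<\eps\dim X_i+o(1)$ for all $j>L$ and all $k$. Letting $j\to\infty$ and then $\eps\to 0$ yields $\gamma_i(A)=0$, and finally $\gamma(A)=\lim_i\gamma_i(A)=0$.

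The main obstacle I expect is step (2): constructing, honestly and in a way that interacts correctly with the eigenvalue maps, the $\alpha_i'$-compatible map whose ``non-thin'' part is genuinely $0$-dimensional (or at least contributes nothing to $\mathcal{D}$ after pullback). One has to be careful that the collapsing map on $X_i\setminus W$ really does send every point into a single member of $\alpha_i'$ — i.e. that the cores of the $U'$ are pairwise disjoint, which is why the one-to-one correspondence with $\overline{U'}\subseteq U$ in the definition of SBRP is essential — and that the way $W$ is glued in adds at most $\dim X_i$ to the order at each point it covers. A secondary subtlety is uniformity: SBRP gives $\delta$ and $\alpha_i'$ depending on $X_{j,k}$, so the map $g$ (and the complex $K$) must be allowed to depend on $j,k$; this is harmless since we only ever estimate $\mathcal{D}(\varphi_{i,j}^k(\alpha_i'))$ block-by-block, but it should be stated carefully. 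Simplicity of $A$ enters, as usual, to guarantee that the multiplicities $m_{i,j}^{l,k}\to\infty$ so that the ``$o(n_{j,k})$'' error terms (coming from e.g. a bounded number of exceptional eigenvalue maps or from $\dim X_i/n_{j,k}$-type leftovers) are indeed negligible in the limit.
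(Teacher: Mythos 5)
Your strategy is the same as the paper's (both adapt Theorem 5.4 of Lindenstrauss--Weiss): use SBRP to shrink the cover so that boundaries have small orbit capacity, build an $\alpha$-compatible map that is locally constant off a thin set, pull it back along the eigenvalue maps, and invoke Proposition \ref{comp-map}. But your resolution of what you yourself flag as the main obstacle does not work. You propose that off $W=\bigcup_{U'}\mathrm{B}(\partial U',\delta)$ every point lies in a \emph{unique} core $U'$, claiming this follows because ``the cores of the $U'$ are pairwise disjoint'' thanks to the one-to-one correspondence in the definition of SBRP. The correspondence gives no such thing: $\alpha'$ is still a finite open cover of the connected space $X_{i,l}$, so the sets $U'$ cannot be pairwise disjoint, and a point of $X_{i,l}\setminus W$ may lie in several of them. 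The device that actually makes the off-$W$ part ``zero-dimensional'' is a telescoping partition of unity: starting from $\phi'_s$ equal to $1$ on $U'_s$ and decaying to $0$ across $\mathrm{B}(\partial U'_s,\delta)$, one sets $\phi_1=\phi'_1$, $\phi_s=\min(\phi'_s,\,1-\phi'_1-\cdots-\phi'_{s-1})$, obtaining functions subordinate to $\alpha$ with $\sum_s\phi_s=1$ and $\bigcup_s\phi_s^{-1}((0,1))$ contained in the union of boundary neighbourhoods; hence $\Phi=(\phi_1,\dots,\phi_{\abs{\alpha}})$ sends $X_{i,l}\setminus W$ into the finite set $\{0,1\}^{\abs{\alpha}}$, and the composed map on $X_{j,k}$ lands in a finite union of affine subspaces of dimension at most $\eps\abs{\alpha}m_{i,j}^{l,k}$. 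Without this (or an equivalent construction) your step (2) is unproved, and it is exactly where the work lies.

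A second, smaller defect: your final bound is $\eps\,\mathrm{dim}(X_i)$, but the base spaces of an AH-algebra need not be finite-dimensional, so this can be vacuous. The collapsing map should take values in the nerve of $\alpha'$ (equivalently the simplex in $\Real^{\abs{\alpha}}$), of dimension at most $\abs{\alpha}-1$, giving $\mathcal{D}(\varphi_{i,j}^k(\alpha))\leq\eps\abs{\alpha}\,n_{j,k}$ independently of $\mathrm{dim}(X_i)$, as in the paper. Your handling of simplicity, of the block-by-block dependence of $\alpha'$ and $\delta$ on $X_{j,k}$, and of the monotonicity $\mathcal{D}(\varphi_{i,j}^k(\alpha))\leq\mathcal{D}(\varphi_{i,j}^k(\alpha'))$ is fine.
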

\begin{proof}
The proof is a modification of the proof of Theorem 5.4 of \cite{Lindenstrauss-Weiss-MD}. Assume that $A$ has the SBRP, and let $\alpha$ be an open cover of $X_i:=\bigsqcup_{l}X_{i, l}$. Fix $i$ and $l$, and consider the unit $e$ of $A_{i,l}$. Since $A$ is simple, there exists $\delta_l>0$ such that $\tau(e)>\delta_l$ for any tracial state $\tau$ on $A$. Therefore, there exists $N'$ such that for any $j>N'$, one has that $$\frac{n_{i, l}m_{i, j}^{l, k}}{n_{j, k}}>\delta_l,\quad\forall 1\leq k\leq h_j.$$

Let $d$ be a compatible metric on $X_{i, l}$.  For any $\eps>0$, since $A$ has SBRP, there exists $N>N'$ such that for any $X_{j, k}$ with $j>N$, there is exists an open cover $\alpha'\succ \alpha$ and $\delta>0$ such that there is a one-to-one correspondence between the elements $U_s$ of $\alpha_{i, l}$ and $U'_s$ of $\alpha'_{i, l}$ with $\overline{U_s'}\subset U_s$, such that $$\frac{n_{i, l}}{n_{j, k}}\#\{1\leq m\leq m_{i, j}^{l, k};\ \lambda_{i, j}^{l, k}(m)(x)\in \textrm{B}(\bigcup_{s=1}^{\abs{\alpha_{i, l}}}\partial U_s', \delta) \}<\eps\delta_l,\quad \forall x\in X_{j, k},$$ and $\textrm{B}(\partial U'_s, \delta)\subseteq U_s,$ where $\textrm{B}(\partial U'_s, \delta)$ is the closed $\delta$-neighbourhood of $\partial U'_s$.


Fix $X_{j, k}$ and let us show that $\mathcal{D}(\varphi_{i, j}^k(\alpha))/n_{j, k}$ is bounded by $\eps\abs{\alpha}$. 

Define
$$\phi'_s(x)=\left\{
\begin{array}{ll}
1, & \textrm{if}\ x\in U_s',\\
\max(0, 1-\delta^{-1}d(x, \partial U_s')), & \textrm{otherwise},
\end{array}
\right.
$$
and define
\begin{eqnarray*}
\phi_1(x)&=&\phi'_1(x),\\
\phi_2(x)&=&\min(\phi'_2(x), 1-\phi'_1(x)),\\
\phi_3(x)&=&\min(\phi'_3(x), 1-\phi'_1(x)-\phi'_2(x)),\\
&\vdots&.
\end{eqnarray*}
Therefore, one has a subordinate partition of unity $\phi_s: X_{i, l}\to [0, 1]$ such that 
\begin{enumerate}
\item $\displaystyle{\sum_{s=1}^{\abs{\alpha_{i, l}}}\phi_s(x)=1}$,
\item $\displaystyle{\mathrm{supp}(\phi_s)\subset U}$ for some $U\in\alpha,$
\item $\displaystyle{\frac{n_{i, l}}{n_{j, k}}\#\{1\leq m\leq m_{i, j}^{l, k};\ \lambda_{i, j}^{l, k}(m)(x)\in \bigcup_{s=1}^{\abs{\alpha_{i, l}}}\phi_s^{-1}(0, 1) \}<\eps\delta_l,\quad \forall x\in X_{j, k},\ \forall k.}$
%
\end{enumerate}

Note that $j>N>N'$, one has that $$\frac{n_{i, l}m_{i, j}^{l, k}}{n_{j, k}}>\delta_l,\quad\forall 1\leq k\leq h_j.$$ Also note that $n_{j, k}=\sum_{l=1}^{h_i} n_{i, l}m_{i, j}^{l, k}$. Hence $$\sum_{l=1}^{h_i} m_{i, j}^{l, k}\leq n_{j, k}.$$

Set $E=\bigcup_{s=1}^{\abs{\alpha_{i, l}}}\phi_s^{-1}(0, 1))$, and choose $j$ large enough such that for any $1\leq l\leq h_i$, $$\frac{n_{i, l}}{n_{j, k}}\#\{1\leq m\leq m_{i, j}^{l, k};\ \lambda_{i, j}^{l,k}(m)(x)\in E\cap X_{i, l}\}<\eps\delta_l,\quad \forall x\in X_{j, k},\ \forall1\leq k\leq h_j.$$ Thus, 
\begin{equation}\label{007}
\frac{1}{m_{j, k}^{l, k}}\#\{1\leq m\leq m_{i, j}^{l, k};\ \lambda_{i, j}^{l,k}(m)(x)\in E\cap X_{i, l}\}<\eps,\quad\forall x\in X_{j, k},\ \forall1\leq k\leq h_j.
\end{equation}

Define $\Phi: X_i\to \Real^{\abs{\alpha}}$ by $$x\mapsto (\phi_1(x), ..., \phi_{\abs{\alpha}}(x)),$$ and set $$\Gamma_{i, j}^k: X_{j, k}\ni x\mapsto (\lambda_{i, j}^{l, k}(1)(x),..., \lambda_{i, j}^{l, k}(m_{i, j}^{l, k})(x))\in (X_i)^{m_{i, j}^{l, k}}.$$ 

Consider the map $$f_{i, j}^{l, k}={\Phi}\circ\Gamma_{i, j}^k: X_{j, k}\to \Real^{\abs{\alpha}m_{i, j}^{l, k}}.$$ Then the set $f_{i, j}^{l, k}(X_{j, k})$ is a subset of the image of finite number of $\eps\abs{\alpha}m_{i, j}^{l, k}$ dimensional affine subspace of $\Real^{\abs{\alpha}m_{i, j}^{l, k}}$. Indeed, let $e_j^i$, $i=1,..., m_{i, j}^{l,k}$, $j=1,...,\abs{\alpha}$ be the standard base of $\Real^{\abs{\alpha}m_{i, j}^{l,k}}$. For every $I=\{i_1,...,i_{N'}\}$, $N'\leq\eps m_{i, j}^{l, k}$, and every $\xi\in\{0, 1\}^{\abs{\alpha}m_{i, j}^{l, k}}$, set $$C(I, \xi)=\mathrm{span}\{e_j^i:\ i\in I, 1\leq j\leq\abs{\alpha}\}+\xi.$$ Then, by \eqref{007}, $$f_{i, j}^{l,k}(X_{j, k})\subseteq\bigcup_{\abs{I}<\eps m_{i, j}^{l, k}, \xi} \pi(C(I, \xi)).$$

It is easy to see that $f_{i, j}^{l, k}$ is $\varphi_{i, j}^{l, k}(\alpha_{i, l})$ compatible. By Proposition \ref{comp-map}, one has that $$\mathcal D(\varphi_{i, j}^{l, k}(\alpha_{i, l}))<\eps\abs{\alpha}m_{i, j}^{l, k},$$ and hence 
 \begin{eqnarray*}
 \mathcal{D}(\varphi_{i, j}^k(\alpha))&=&\mathcal{D}(\varphi_{i, j}^{1,k}(\alpha_1)\vee\cdots\vee \varphi_{i, j}^{h_i,k}(\alpha_{h_i}))\\
 &\leq&\mathcal{D}(\varphi_{i, j}^{1,k}(\alpha_1))+\cdots+\mathcal{D}(\varphi_{i, j}^{h_i,k}(\alpha_{h_i}))\\
 &\leq&\eps\abs{\alpha}(m_{i, j}^{1, k}+\cdots+m_{i, j}^{h_i, k})\\
 &\leq&\eps\abs{\alpha} n_{j, k}.
 \end{eqnarray*}

Therefore, $A$ has mean dimension zero, as desired.
\end{proof}

\begin{cor}\label{lisa}
If $A$ has at most countably many extremal tracial states, then $A$ has mean dimension zero.
\end{cor}
\begin{proof}
It follows from Lemma \ref{baby-lisa}, Lemma \ref{judy}, and Theorem \ref{nancy}.
\end{proof}

\begin{rem}\label{MD0>SBP}
Unlike the small boundary property for dynamical systems, which is equivalent to the mean dimension zero for minimal dynamical systems (Theorem 6.2 of \cite{Lind-MD}), the small boundary property for AH-algebra is strictly stronger than {the property of} mean dimension zero. The following is an example of a simple AH-algebra which has mean dimension zero but does not satisfy the small boundary property (it is one of the Goodearl algebras):

Let $(m_n)$ be a sequence of natural numbers such that there is $c>0$ such that $$(\frac{m_n-1}{m_n})(\frac{m_{n-1}-1}{m_{n-1}})\cdots(\frac{m_1-1}{m_1})>c,\quad\forall n\in\mathbb N.$$ Set $r_n=m_{n-1}\cdots m_1$ (assume $r_1=1$), and set $A_n=\textrm{M}_{r_n}(\textrm{C}([0, 1]))$. Let $\{x_n\}$ be a dense sequence in $[0, 1]$.  Set the eigenvalue maps (there are $m_n$ of them) between $A_n$ and $A_{n+1}$ to be $$\lambda_1=\textrm{id}, ...,\lambda_{m_n-1}=\textrm{id}, \lambda_{m_n}(x)=x_n.$$ Then the inductive limit $A=\varinjlim A_n$ is a simple AI-algebra, and hence has mean dimension zero (see Remark \ref{AH-SD-MD0}). However, it does not have SBP. In fact, the following stronger statement holds: for any $z\in[0, 1]$ in the spectrum of $A_n$, one has that $\mathrm{ocap}(\{z\})>0$.

Indeed, for any $k>n$, {there are $(m_n-1)(m_{n+1}-1)\cdots (m_{k-1}-1)$ of eigenvalue maps (in total of $m_nm_{n+1}\cdots m_{k-1}$) between $A_n$ and $A_k$ which are identity maps}. Thus, 
\begin{eqnarray*}
&&\sup_{x\in X_{j, k}}\frac{r_n}{r_k}\#\{1\leq m\leq m_nm_{n+1}\cdots m_{k-1};\ \lambda_m(x)=z\}\\
&\geq& (\frac{m_k-1}{m_k})(\frac{m_{k-1}-1}{m_{k-1}})\cdots(\frac{m_n-1}{m_n})\\
&\geq& (\frac{m_k-1}{m_k})(\frac{m_{k-1}-1}{m_{k-1}})\cdots(\frac{m_1-1}{m_1})>c,
\end{eqnarray*}
and $\textrm{ocap}(\{z\})>c$.

\end{rem}

\begin{defn}\label{VT-SVT}
{Let $F$ be a self-adjoint element in a homogeneous C*-algebra $$A_i=\bigoplus_{l=1}^h p_{i, l}\mathrm{M}_{r_{i, l}}(\textrm{C}(X_{i, l})) p_{i, l},$$contradicts 
where $X_{i, l}$ are connected compact Hausdorff spaces and $p_{i, l}$ are projections with ranks $n_{i, l}$}, the variation of the trace of $F$ is $$\max_l\sup_{s, t\in X_{i, l}}\frac{1}{n_{i, l}}\abs{\mathrm{Tr}(F(s))-\mathrm{Tr}(F(t))}.$$

An AH-algebra {$A=\varinjlim A_i$} is said to have the property of small variation of trace (SVT) if for any self-adjoint element $F\in A_i$ and any $\eps>0$, there exists $N>0$ such that for any $j>N$, the variation of the trace of $\varphi_{i, j}(F)$ is less than $\eps$.
\end{defn}

The following lemma is well known.

\begin{lem}\label{SVT}
If the projections of an AH-algebra separate traces, then it has the SVT.
\end{lem}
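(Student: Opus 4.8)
If the projections of an AH-algebra $A=\varinjlim(A_i,\varphi_i)$ separate traces, then $A$ has SVT.

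The plan is to reduce the claim to a fact about the image of $\Kzero$ in the continuous affine functions on the trace simplex. Fix a self-adjoint $F\in A_i$ and $\eps>0$. The quantity to be controlled, the variation of the trace of $\varphi_{i,j}(F)$ over a connected component $X_{j,k}$, is $\sup_{s,t\in X_{j,k}}\frac{1}{n_{j,k}}\abs{\mathrm{Tr}(\varphi_{i,j}(F)(s))-\mathrm{Tr}(\varphi_{i,j}(F)(t))}$. The key point is that for a connected component $X_{j,k}$, the map $x\mapsto \frac{1}{n_{j,k}}\mathrm{Tr}(\varphi_{i,j}(F)(x))$ has an intrinsic meaning: evaluation at a point $x\in X_{j,k}$ is (after cutting down by $p_{j,k}$) a representation of $A_j$, hence via $\varphi_{j,\infty}$ it corresponds to a point-evaluation-type positive functional, and the difference of two such functionals at points of a \emph{connected} component vanishes on projections. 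So the first step is to observe that if $e$ is a projection in $A_i$ (or more generally in $\mathrm{M}_\infty(A_i)$), then $x\mapsto\mathrm{Tr}(\varphi_{i,j}(e)(x))$ is locally constant on $X_{j,k}$, hence constant by connectedness; therefore the variation of the trace of $\varphi_{i,j}(F)$ is unchanged if we replace $F$ by $F+$(any self-adjoint combination of projections of $A_i$), i.e. it only depends on $F$ modulo the real span of projections.

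Second, I would use that $F$ is self-adjoint in the homogeneous algebra $A_i$, so $\mathrm{T}(A_i)\ni\tau\mapsto\tau(F)$ is a continuous affine function on the (compact metrizable) simplex $\mathrm{T}(A_i)$. Since projections separate traces, the hypothesis says exactly that $\rho_i:\mathrm{T}(A_i)\to\mathrm{S}_u(\Kzero(A_i))$ is injective, and similarly at each later stage; combined with the fact that $A_i$ is homogeneous (so $\Kzero(A_i)$ has "enough" projections — every element of $\mathrm{S}_u(\Kzero)$ is realized, and the point-evaluations at the finitely many components together with connectedness pin down the extreme traces), one gets that the continuous affine functions coming from self-adjoint elements can be approximated, uniformly on $\mathrm{T}(A_i)$, to within $\eps$ by real-linear combinations of (classes of) projections. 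Concretely: $\widehat{F}-c$, for the mean value $c$ of $\mathrm{Tr}(F)/n$, lies in the uniform closure of the image of $\Kzero(A_i)\otimes\Ratn$ in $\aff(\mathrm{T}(A_i))$ up to $\eps$; pick a self-adjoint $G\in\mathrm{M}_\infty(A_i)$ that is a rational combination of projections with $\norm{\widehat F-c\cdot 1-\widehat G}_\infty<\eps/3$.

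Third, push forward: for $j\ge i$ and $x\in X_{j,k}$ the normalized point-trace $\tau_x:=\frac{1}{n_{j,k}}\mathrm{Tr}(\varphi_{i,j}(\cdot)(x))$ is a tracial state of $A_i$, so $\abs{\tau_x(F)-c-\tau_x(G)}<\eps/3$; since $\varphi_{i,j}(G)$ is (a difference of) projections, $\tau_x(G)$ is independent of the choice of $x\in X_{j,k}$ by the first step; hence for $s,t\in X_{j,k}$ we get $\abs{\tau_s(F)-\tau_t(F)}\le 2\eps/3<\eps$, with $N=i$. (If the approximation only holds after passing to a later stage — which is where the injectivity of $\rho$ at \emph{all} levels, rather than just at level $i$, is used, since $\Kzero(A)=\varinjlim\Kzero(A_i)$ and density of projection-combinations in $\aff(\mathrm{T}(A))$ need only be achieved in the limit — then one chooses $G\in\mathrm{M}_\infty(A_L)$ approximating $\varphi_{i,L}(F)$ for suitable $L$, and takes $N=L$.)

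The main obstacle is the approximation step (the second one): showing that self-adjoint elements of $A_i$ are, in trace, uniformly approximable by combinations of projections, using only that projections separate traces. This is essentially the statement that $\Kzero(A)$ has dense image in $\aff(\mathrm{T}(A))$ when projections separate traces, which for AH-algebras is indeed standard (it follows from $A$ having real rank at most one behavior on the level of $\Kzero$, or directly from the structure of $\mathrm{T}(A_i)$ as a Bauer simplex with metrizable extreme boundary built from the finitely many connected base spaces) — but it does require care about whether the approximation is available at the fixed stage $i$ or only in the limit, which is exactly why the conclusion is phrased with an $N$ depending on $F$ and $\eps$. Everything else is a routine chase through point-evaluations and connectedness.
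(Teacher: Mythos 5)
Your strategy is genuinely different from the paper's proof, which argues by contradiction: if the variation did not tend to zero, an ultrafilter limit of the point-evaluation traces at two points of the same connected component would produce two distinct tracial states of $A$ that agree on every projection. Your direct route --- reduce modulo projections using constancy of ranks on connected components, then approximate $\widehat F$ uniformly by a real combination of projection classes --- is viable, and the density fact you invoke is correct: every bounded functional on $\aff(\mathrm{T}(A))$ is a difference of nonnegative multiples of traces, so a linear subspace containing the constants and separating $\mathrm{T}(A)$ is weakly, hence norm, dense; separation of traces by projections therefore gives density of the real span of the $\widehat p$ in $\aff(\mathrm{T}(A))$.

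The gap is in how you transport this approximation back to a finite stage. The bound $\norm{\widehat F-c\cdot 1-\widehat G}_\infty<\eps/3$ is only available over $\mathrm{T}(A)$; it cannot be arranged over $\mathrm{T}(A_i)$ or $\mathrm{T}(A_L)$, since the projections of a homogeneous algebra need not separate its traces (already for $\mathrm{C}([0,1])$), and the hypothesis concerns $A$ alone. But the functionals you then feed into this bound, the normalized point-traces $\tau_x$ for $x\in X_{j,k}$, are tracial states of $A_j$ which in general do \emph{not} extend to tracial states of $A$. Hence $\abs{\tau_x(F)-c-\tau_x(G)}<\eps/3$ does not follow for any fixed $j$, and in particular taking $N=L$ is wrong. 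To close the argument you need the additional fact that for a self-adjoint $h\in\mathrm{M}_\infty(A_L)$ the quantity $\sup_{\tau\in\mathrm{T}(A_j)}\abs{\tau(\varphi_{L,j}(h))}$ decreases in $j$ to $\sup_{\tau\in\mathrm{T}(A)}\abs{\tau(\varphi_{L,\infty}(h))}$, proved by extending tracial states of $A_j$ to states of $A$ and passing to accumulation points --- which is exactly the compactness-of-traces device the paper's own proof (and its Remark \ref{tr-ocap}) is built on. Applying this with $h=\varphi_{i,L}(F)-c-G$ produces an $N\geq L$ beyond which the variation is at most $2\eps/3$, completing your proof; without it, the argument as written does not establish the bound at any finite stage.
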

\begin{proof}
Let us show that if an AH-algebra $A$ does not have the SVT, then the projections of $A$ do not separate traces. Since $A$ does not have the SVT, there exist $\eps>0$, $F\in A_i$ and $j_m\to\infty$ as $m\to\infty$ such that for any $A_{j_m}$, there is a connected component $X_{j_m, k_m}$ and $s_{j_m}, t_{j_m}\in X_{j_m, k_m}$ such that 
$$\frac{1}{n_{j_m, k_m}}\abs{\mathrm{Tr}(\varphi_{i, j_m}(F)(s_{j_m}))-\mathrm{Tr}(\varphi_{i, j_{m}}(F)(t_{j_m}))}\geq\eps.$$ 
Therefore there are tracial states $\tau_{j_m}^{(0)}$ and $\tau_{j_m}^{(1)}$ on $A_{j_m}$ such that 
$$\abs{\tau_{j_m}^{(0)}(\varphi_{i, {j_m}}(F))-\tau_{j_m}^{(1)}(\varphi_{i, j_m}(F))}\geq\eps.$$ 
Pick $\omega\in\beta(\mathbb N)\setminus\mathbb N$, and consider 
$$x\mapsto\omega(\tau_{j_1}^{(l)}(\varphi_{i, j_1}(x)), \tau_{j_2}^{(l)}(\varphi_{i, j_2}(x)),...,\tau_{j_m}^{(l)}(\varphi_{i, j_m}(x)),...) $$ 
with $l=0, 1$. It then induces two tracial states $\tau^{(0)}$ and $\tau^{(1)}$ on $A$, and $\tau^{(0)}\neq\tau^{(1)}$. However, for any projection $p\in A$, one has that $\tau^{(0)}(p)=\tau^{(1)}(p)$. Thus the projections of $A$ do not separate traces.
\end{proof}


\begin{defn}
Let $A$ be a C*-algebra, let $\tau_1$ and $\tau_2$ be two tracial states, and let $\F$ be a finite subset of $A$. We write $$\norm{\tau_1-\tau_2}_\F=\max\{\abs{\tau_1(f)-\tau_2(f)};\ f\in\F\}.$$

Let $\Delta$ be a subset of $\textrm{T}(A)$. Write $\tau\in_{\F, \eps}\Delta$ if there is $\tau'\in\Delta$ such that $\norm{\tau-\tau'}_\F<\eps.$

If $A$ is a homogeneous C*-algebra with spectrum $X$, for any $x\in X$, denote by $\tau_x$ the tracial state of $A$ induced by the Dirac measure on $\{x\}$.
\end{defn}

The following lemma can be regarded as a generalization of Lemma \ref{SVT}.
\begin{lem}\label{flat}
Let $A$ be a simple AH-algebra, and let $M>0$. If $\rho^{-1}(\kappa)$ has at most $M$ extreme points for any $\kappa\in  \mathrm{S}_u(\Kzero(A))$, then, for any $\eps>0$ and any finite subset $\F\subseteq A_i$, there exists $N$ such that for any $X_{j, k}$ with $j>N$, there exists a convex $\Delta_{j, k}\subseteq \mathrm{T}(A_i)$ with at most $M$ extreme points such that $$\varphi_{i, j}^*(\tau_{x})\in_{\F, \eps}\Delta_{j, k},\quad\forall x\in X_{j, k}.$$
\end{lem}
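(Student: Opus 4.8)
The plan is to argue by contradiction, imitating the ultrafilter/ultrapower compactness argument already used in the proof of Lemma \ref{SVT}, but keeping track of the \emph{decomposition} of traces into extreme points rather than just their values on projections. Suppose the conclusion fails for some $\eps>0$ and some finite $\F\subseteq A_i$. Then there are $j_m\to\infty$ and points $x_{j_m}\in X_{j_m, k_m}$ (for suitable indices $k_m$) such that $\varphi_{i, j_m}^*(\tau_{x_{j_m}})$ is \emph{not} $\F,\eps$-close to any convex subset of $\mathrm{T}(A_i)$ with at most $M$ extreme points.

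The key step is to manufacture, from each $x_{j_m}$, a genuine tracial state on the whole algebra $A$ whose restriction to $A_i$ is exactly $\varphi_{i, j_m}^*(\tau_{x_{j_m}})$, and then to use the hypothesis on $\rho^{-1}(\kappa)$. Concretely: fix $\omega\in\beta(\mathbb N)\setminus\mathbb N$. For each fixed $m$, the state $\tau_{x_{j_m}}$ on $A_{j_m}$ extends (pull back along $\varphi_{j_m,\infty}$, or rather along further stages and take a weak-* limit) to a tracial state $\sigma_m$ on $A$ with $\sigma_m\circ\varphi_{i,\infty}=\varphi_{i, j_m}^*(\tau_{x_{j_m}})$ on $A_i$; more carefully, since $\varphi_{j_m,\infty}$ need not be injective one should first extend $\tau_{x_{j_m}}$ to some $\tau'_m\in\mathrm{T}(A_{j_m})$ that factors appropriately and then take a limit along $\omega$ of $\tau'_m\circ\varphi_{j_m,\infty}$ to get $\sigma\in\mathrm{T}(A)$. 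The point is that $\sigma|_{A_i}$ agrees (along $\omega$) with $\lim_{m\to\omega}\varphi_{i, j_m}^*(\tau_{x_{j_m}})$ in the weak-* topology, at least on the finite set $\F$. Now $\sigma$ determines a state $\rho(\sigma)=\kappa\in\mathrm{S}_u(\Kzero(A))$, and by hypothesis the face $\rho^{-1}(\kappa)\subseteq\mathrm{T}(A)$ has at most $M$ extreme points; write $\sigma=\sum_{t=1}^{M}c_t\,\nu_t$ with $\nu_t$ extremal in that face. Restricting to $A_i$ and pushing back through the (finite, explicit) eigenvalue maps, each $\nu_t|_{A_i}$ is an element of $\mathrm{T}(A_i)$, so $\Delta:=\mathrm{conv}\{\nu_1|_{A_i},\dots,\nu_M|_{A_i}\}$ is a convex subset of $\mathrm{T}(A_i)$ with at most $M$ extreme points containing $\sigma|_{A_i}$; hence for $\F$-many functions, $\varphi_{i, j_m}^*(\tau_{x_{j_m}})$ is within $\eps$ of $\Delta$ for $\omega$-many $m$ — contradicting the choice of the $x_{j_m}$.

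There is one subtlety to handle with care: the above produces, for each bad sequence, \emph{one} polytope $\Delta$ sitting in $\mathrm{T}(A_i)$, whereas the statement asks for a uniform $N$ beyond which \emph{every} fiber $X_{j, k}$ admits such a $\Delta_{j, k}$. To get this, run the contradiction at the level of fibers: if no such $N$ exists, then for every $m$ there is $j_m>m$ and a fiber $X_{j_m, k_m}$ and a point $x_{j_m}$ witnessing failure; feeding this single sequence into the argument above yields the contradiction. One also needs that $\varphi_{i,\infty}(A_i)$ is large enough that $\F$-closeness upstairs transfers to $\F$-closeness of the restrictions — this is automatic since $\varphi_{i, j_m}^*(\tau_{x_{j_m}})(f)=\tau_{x_{j_m}}(\varphi_{i, j_m}(f))$ for $f\in\F$, and likewise $\nu_t|_{A_i}(f)=\nu_t(\varphi_{i,\infty}(f))$.

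The main obstacle I anticipate is the bookkeeping around non-injectivity of the connecting maps and simplicity: one must check that an extreme point of the face $\rho^{-1}(\kappa)$ in $\mathrm{T}(A)$ really does restrict to a well-defined point of $\mathrm{T}(A_i)$ (it does — restriction along a unital $*$-homomorphism sends tracial states to tracial states) and, more delicately, that the Choquet decomposition $\sigma=\sum c_t\nu_t$ can be taken with the $\nu_t$ \emph{extremal in the face}, so that the hypothesis ``$\rho^{-1}(\kappa)$ has at most $M$ extreme points'' applies. Since $\rho^{-1}(\kappa)$ is a closed face of the Choquet simplex $\mathrm{T}(A)$, it is itself a simplex, so a finite-extreme-point face is a genuine finite-dimensional simplex and such a decomposition exists and is unique; this is where simplicity of $A$ (hence metrizability/good structure of $\mathrm{T}(A)$) and exactness — used earlier to know $\rho$ is surjective — enter. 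The rest is the routine $\omega$-limit construction already rehearsed in Lemma \ref{SVT}.
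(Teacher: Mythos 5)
Your argument by contradiction starts from the wrong negation of the statement, and this is not a cosmetic slip --- it removes the entire difficulty of the lemma. The conclusion asserts the existence of \emph{one} convex set $\Delta_{j,k}$ (with at most $M$ extreme points) that works simultaneously for \emph{every} $x\in X_{j,k}$; its negation therefore says that for each candidate $\Delta$ there is some bad point $x_{j_n,k_n}(\Delta)$ \emph{depending on} $\Delta$. You instead posit a single sequence of points $x_{j_m}$ whose restricted traces are far from \emph{every} admissible $\Delta$ --- a hypothesis that is vacuously false, since the singleton $\{\varphi_{i,j_m}^*(\tau_{x_{j_m}})\}$ is itself a convex set with one extreme point. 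Consequently, when you build your polytope $\Delta=\mathrm{conv}\{\nu_1|_{A_i},\dots,\nu_M|_{A_i}\}$ out of the $\omega$-limit of the very sequence $x_{j_m}$ and then declare a contradiction ``with the choice of the $x_{j_m}$,'' the argument is circular: under the correct negation, once this $\Delta$ is specified one is handed a \emph{new} bad point in each $X_{j_n,k_n}$, and you have no control over where its trace accumulates.

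The idea you are missing is the one the paper uses to break this circularity: each $X_{j,k}$ is connected, so the rank (hence the trace) of any projection is constant on it, and therefore \emph{all} points $x\in X_{j,k}$ induce the same state on $\Kzero(A)$. The paper first picks arbitrary points $y_{j_n,k_n}$, forms a limit trace $\tau_y$, and sets $\Delta_i:=\varphi_{i,\infty}^*\bigl(\rho^{-1}(\rho(\tau_y))\bigr)$, which has at most $M$ extreme points by hypothesis. Only then does it invoke the bad points $x_{j_n,k_n}(\Delta_i)$ for this specific $\Delta_i$; since these lie in the same connected components as the $y_{j_n,k_n}$, any accumulation point $\tau_x$ of their traces satisfies $\rho(\tau_x)=\rho(\tau_y)$, hence $\tau_x\in\rho^{-1}(\rho(\tau_y))$ and $\varphi_{i,\infty}^*(\tau_x)\in\Delta_i$, which contradicts $\varphi_{i,j_n}^*(\tau_{x_{j_n,k_n}(\Delta_i)})\notin_{\F,\eps}\Delta_i$ for large $n$. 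Your decomposition $\sigma=\sum_t c_t\nu_t$ into extreme points of the fiber and your remarks on extending states to traces are fine as far as they go, but without the connectedness step there is no mechanism forcing a single $\Delta_{j,k}$ to capture every point of the component, which is precisely what the lemma claims.
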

\begin{proof}
If this were not true, there exist a finite subset $\F\subseteq A_i$ and $\eps>0$ such that for any $n$, there exist $j_n>n$ and $k_n$ such that for any convex $\Delta\subseteq\textrm{T}(A_i)$ with at most $M$ extreme points, there exists $x_{j_n, k_n}(\Delta)$ such that 
\begin{equation}\label{faraway}
\varphi_{i, j_n}^*(\tau_{x_{j_n, k_n}(\Delta)})\notin_{\F, \eps}\Delta
\end{equation}
Let us show that it is impossible.

Choose an arbitrary point $y_{j_n, k_n}\in X_{j_n, k_n}$, and consider the tracial state $\tau_{y_{j_n, k_n}}$. Extend it to a state of $A$, and consider an accumulation point $\tau_y$. It is a straightforward argument to show that $\tau_y$ is in fact a tracial state of $A$. 

Consider $$\Delta_\infty:=\rho^{-1}(\rho(\tau_y)).$$ It then has at most $M$ extreme points, hence $$\Delta_i:=\varphi_{i, \infty}^*(\Delta_\infty)$$ has at most $M$ extreme points.

Consider the points $x_{j_n, k_n}(\Delta_i)$ and the tracial states $\tau_{x_{j_n, k_n}(\Delta_i)}$. The same argument as above shows that there is a tracial state $\tau_x$ on $A$ such that $\tau_x$ is an accumulation point of $\{\tau_{x_{j_n, k_n}(\Delta_i)}\}$. However, since $x_{j_n, k_n}$ is in the same connected component {containing} $y_{j_n, k_n}$, one has $$\rho(\tau_x)=\rho(\tau_y)$$ and hence $$\tau_x\in\Delta_\infty.$$ Since $\tau_x$ is an accumulation point of $\{\tau_{x_{j_n, k_n}(\Delta_i)}\}$, one has that $$\norm{\varphi^*_{j_n, \infty}(\tau_x)-\tau_{x_{j_n, k_n}(\Delta_i)}}_\F<\eps$$ for sufficiently large $n$, thus $$\norm{\varphi^*_{i, \infty}(\tau_x)-\varphi^*_{i, j_n}\tau_{x_{j_n, k_n}(\Delta_i)}}_\F<\eps,$$ which {contradicts} \eqref{faraway}, as desired.
\end{proof}

\begin{thm}\label{RR0-SBP}
{Let $A$ be a simple AH-algebra such that all eigenvalue patterns are induced by continuous functions}. If there is $M>0$ such that $\rho^{-1}(\kappa)$ has at most $M$ extreme points for any $\kappa\in\mathrm{S}(\Kzero(A))$, then $A$ has SBRP, and hence it has mean dimension zero. In particular,  {such} AH-algebras with real rank zero have mean dimension zero.
\end{thm}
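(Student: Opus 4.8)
First I would reduce everything to Theorem \ref{nancy}: it suffices to prove that $A$ has SBRP, and the real rank zero case is then automatic, since a real rank zero $A$ has its projections separating traces, so $\rho$ is injective, each fibre $\rho^{-1}(\kappa)$ is empty or a single point, and the hypothesis holds with $M=1$.

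To verify Definition \ref{SBRP}, fix $X_{i,l}$, a finite open cover $\alpha=\{U_1,\dots,U_p\}$ and $\eps>0$, and fix a partition of unity $\phi_1,\dots,\phi_p$ subordinate to $\alpha$ (so $\mathrm{supp}\,\phi_s\subseteq U_s$, $\sum_s\phi_s=1$). The point of SBRP is that $\alpha'$ and $\delta$ may depend on $X_{j,k}$ while the constant $L$ may not, so I first discretize the possible ``boundary levels'': choose a large integer $Q$ (to be pinned down at the end in terms of $M,p,\eps$), partition $(0,1/p)$ into $Q$ equal intervals $J_1,\dots,J_Q$, and pick continuous bumps $\chi_a\colon[0,1]\to[0,1]$ with $\chi_a$ supported in $J_a$ and $\chi_a\equiv1$ on the middle third of $J_a$. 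Set $\F=\{\chi_a(\phi_s)p_{i,l}:1\le a\le Q,\ 1\le s\le p\}\subseteq A_{i,l}$, apply Lemma \ref{flat} to $\F$ and a small $\eps'$ (say $\eps'=\eps/(4p)$), and let $L$ be the resulting $N$.

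Now fix $X_{j,k}$ with $j>L$, and let $\Delta_{j,k}\subseteq\mathrm{T}(A_i)$, with extreme points $\tau_1,\dots,\tau_{M'}$ ($M'\le M$), be as in Lemma \ref{flat}. Restricting each $\tau_t$ to the summand $A_{i,l}$ gives finite Borel measures $\mu_1,\dots,\mu_{M'}$ on $X_{i,l}$ of total mass at most $1$, and since the sets $\phi_s^{-1}(J_a)$ are essentially disjoint in $a$, a pigeonhole argument produces an index $a^*$ with $\mu_t(\phi_s^{-1}(J_{a^*}))$ of order $Mp/Q$ for every $t,s$. As the level sets $\{\phi_s=r\}$ are pairwise disjoint, I can choose $t_0$ in the interior of the middle third of $J_{a^*}$ with $\mu_t(\{\phi_s=t_0\})=0$ for all $t,s$, and set $U_s'=\{\phi_s>t_0\}$. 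Then $\{U_s'\}$ still covers $X_{i,l}$ (because $t_0<1/p$ and $\sum_s\phi_s=1$), $\overline{U_s'}\subseteq\{\phi_s\ge t_0\}\subseteq\mathrm{supp}\,\phi_s\subseteq U_s$, and $\partial U_s'\subseteq\{\phi_s=t_0\}$; I then pick $\delta>0$ (depending on $X_{j,k}$) so small that each $\mathrm{B}(\partial U_s',\delta)$ lies in $U_s$ and in $\phi_s^{-1}$ of the set where $\chi_{a^*}\equiv1$, which is possible because $\partial U_s'$ is compact and contained in the relevant open set. These $\alpha'=\{U_s'\}$ and $\delta$ are the required data.

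Finally I estimate $\mathrm{ocap}_{j,k}(E)$ for $E=\bigcup_s\mathrm{B}(\partial U_s',\delta)$. Put $g=\sum_s\chi_{a^*}(\phi_s)p_{i,l}$, a sum of $p$ members of $\F$; on $E$ at least one $\chi_{a^*}(\phi_s)$ equals $1$, so $\mathrm{Tr}(g(y))\ge n_{i,l}$ for every $y\in E$, whence, by the eigenvalue formula for the trace of $\varphi_{i,j}^k(g)$,
\[
\mathrm{ocap}_{j,k}(E)=\sup_{x}\frac{n_{i,l}}{n_{j,k}}\#\{m:\lambda_{i,j}^{l,k}(m)(x)\in E\}\ \le\ \sup_{x}\frac1{n_{j,k}}\mathrm{Tr}(\varphi_{i,j}^k(g)(x))\ =\ \sup_{x}\varphi_{i,j}^{*}(\tau_x)(g).
\]
For each $x$ there is $\tau^{(x)}\in\Delta_{j,k}$ with $|\varphi_{i,j}^{*}(\tau_x)(f)-\tau^{(x)}(f)|<\eps'$ for all $f\in\F$; summing over the $p$ summands of $g$ and using that $\Delta_{j,k}$ is the convex hull of $\tau_1,\dots,\tau_{M'}$ gives $\varphi_{i,j}^{*}(\tau_x)(g)<\max_t\tau_t(g)+p\eps'$, while $\tau_t(g)=\sum_s\int\chi_{a^*}(\phi_s)\,d\mu_t\le\sum_s\mu_t(\phi_s^{-1}(J_{a^*}))$, which is of order $Mp^2/Q$. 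Choosing $Q$ large enough at the outset, both terms are below $\eps/2$, so $\mathrm{ocap}_{j,k}(E)<\eps$ and $A$ has SBRP. The only genuinely delicate point is this quantifier juggling: $\F$ and $L$ must be selected before $X_{j,k}$ is known, yet the refinement has to be tailored to the at most $M$ limiting traces attached to $X_{j,k}$, and the discretization into the fixed bumps $\chi_a(\phi_s)$ together with the pigeonhole---carried out so that all estimates are uniform in $j$ and $k$---is what makes this possible.
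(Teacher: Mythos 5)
Your proof is correct, and it rests on the same engine as the paper's: Lemma \ref{flat} applied to a family of many pairwise orthogonal positive ``shell'' functions, followed by a pigeonhole over the at most $M$ extreme traces of $\Delta_{j,k}$ to locate one shell that every limiting trace assigns small mass, and then placing $\partial U_s'$ inside that shell so that $\mathrm{ocap}_{j,k}$ of its neighbourhood is controlled by $\varphi_{i,j}^*(\tau_x)(g)$ for the corresponding bump $g$. Where you genuinely diverge is in the construction of the refinement required by Definition \ref{SBRP}: the paper first proves a reduction (via Lebesgue numbers and a cover by metric balls) to the statement that thin annuli $\mathrm{Ann}(x,r_1,r_2)$ around each ball can be chosen with small orbit capacity, and only then runs the pigeonhole on $m=[2/\eps]^M$ concentric radial bumps; you instead skip the ball/annulus reduction entirely and shrink the given cover directly to superlevel sets $U_s'=\{\phi_s>t_0\}$ of a subordinate partition of unity, with the role of the annuli played by the preimages $\phi_s^{-1}(J_a)$ of the level intervals. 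Your route produces the one-to-one correspondence $U_s\leftrightarrow U_s'$ and the covering property ($t_0<1/p$ forces some $\phi_s>t_0$) in a single step and keeps all constants ($Q$, $\eps'=\eps/(4p)$, $L$) visibly independent of $X_{j,k}$, which is exactly the quantifier order SBRP demands; the paper's route localizes the problem to individual balls, which costs an extra reduction but makes the single-annulus estimate cleaner. Both are valid, and the real rank zero reduction to $M=1$ via injectivity of $\rho$ matches the paper's intent.
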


\begin{proof}
Consider $X_{i, l}$, and let $d$ be a compatible metric on $X_{i, l}$. We assert that in order to prove the theorem, it is enough to show that for any open ball $\mathrm{B}(x, t)$, any $\eps>0$, and any $s<t$, there exists $L>0$ such that for any $X_{j, k}$ with $j>L$, there are $s<r_1<r_2<t$ such that $$\textrm{ocap}_{j, k}(\mathrm{Ann}(x, r_1, r_2))<\eps,$$ where $\mathrm{Ann}(x, r_1, r_2)$ is the closed annulus with centre $x$ and radius $r_1$ and $r_2$.

Indeed, let $\alpha$ be a finite open cover of $X_{i, l}$. and denote by $\delta$ the Lebesgue number of $\alpha$. Let $\{\textrm{B}(x_n, s_n);\ n=1, ..., m\}$ a finite cover of $X_{i, l}$ with open balls such that for each $\textrm{B}(x_n, s_n)$, one has that $\overline{\textrm{B}(x_n, s_n)}\subseteq U$ for some $U\in\alpha$.

For each $\textrm{B}(x_n, s_n)$, choose $t_n>s_n$ such that $\textrm{B}(x_n, t_n)\subset U$ if $\overline{\textrm{B}(x_n, s_n)}\subseteq U$. Choose $L_n>0$ such that for any $X_{j, k}$ with $j>L_n$, there are $s_k<r_{k, 1}<r_{k, 2}<t_n$ such that 
$$\textrm{ocap}_{j, k}(\mathrm{Ann}(x_n, r_{n, 1}, r_{n, 2}))<\eps/m.$$ 
Set $L=\max\{L_1, ..., L_m\}$, and consider a space $X_{j, k}$ with $j>L$. 

For each $U\in\alpha$, set 
$$U'=\bigcup_{\textrm{B}(x_n, s_n)\subset U}\textrm{B}(x_n, r_n),$$ where $r_n=(r_{n, 1}+r_{n, 2})/2$. Set $\delta=\min\{(r_{n, 2}-r_{n, 1})/2;\ 1\leq n\leq m\}$.

Then 
$$\bigcup_k U_k'\supseteq\bigcup_{n=1}^m\textrm{B}(x_n, r_n)\supseteq\bigcup_{n=1}^m\textrm{B}(x_n, s_n)=X_{i, l},$$ 
and hence $\{U_k'\}$ is an open cover of $X_{i, l}$. Moreover, one has that $U'_k\subset U_k$, and 
$$\textrm{B}(\partial U'_k, \delta)\subseteq\bigcup_{n=1}^m\textrm{Ann}(x_n, r_{n, 1}, r_{n, 2})).$$ 
Hence 
$$\mathrm{ocap}_{j, k}(\bigcup_{j=1}^{\abs{\alpha}}\partial U_j')\leq\sum_{j=1}^m\mathrm{ocap}_{j, k}(\textrm{Ann}(x_j, r_{j, 1}, r_{j, 2}))<\eps.$$ 
This proves the assertion.

Consider a ball $\textrm{B}(x, t)$, any $\eps>0$ and $s<t$. Set $m=[2/\eps]^M$, where $[1/\eps]$ is the smallest integer bigger than $1/\eps$. Set $d=t-s$. For each $1\leq n\leq m$, choose a continuous function $0\leq f_n\leq 1$ such that $f_n(y)=0$ for any $y\notin\textrm{Ann}(x, s+(n-1)d/m, s+nd/m)$ and $f_n(y)=1$ for any $y\in\textrm{Ann}(x, s+(4n-3)d/4m, s+(4n-1)d/4m)$. 

It is clear that $f_n$ are mutually orthogonal. Denote by $\F=\{f_1, ..., f_m\}$. By Lemma \ref{flat}, there exists $L$ such that for any $X_{j, k}$ with $j>L$, there exists a convex $\Delta_{j, k}\subseteq\textrm{T}(A_i)$ with at most $M$ extreme points such that  $$\varphi^*_{i, j}(\tau_x)\in_{\F, \eps/2}\Delta_{j, k}, \quad\forall x\in X_{j, k}.$$

Denote by $\mu_1, ..., \mu_M$ the extreme points of $\Delta_{j, k}$. Then, there exists $f_n\in\F$ such that $$\mu_t(f_n)<\eps/2,\quad 1\leq t \leq M,$$ and hence $$\varphi^*_{i, j}(\tau_x)(f_n)<\eps/2+\eps/2=\eps, \quad\forall x\in X_{j, k}.$$

Set $r_1=s+(4n-3)d/4m$ and $r_2=s+(4n-1)d/4m$.  Note that $$\textrm{ocap}_{j, k}(\textrm{Ann}(x, r_1, r_2))\leq \max_{x\in X_{i, j}}\{\varphi^*_{i, j}(\tau_x)(f_n)\}<\eps,$$ as desired.
\end{proof}

\begin{rem}
The author would like to thank Professor XiaoQiang Zhao for encouraging him to fill a gap in the proof of the theorem above.
\end{rem}

\begin{rem}
In the proof Theorem \ref{RR0-SBP}, in general, one cannot choose the annulus $\textrm{Ann}(x, r_1, r_2)$ uniformly small with respect to $k$, i.e., $r_1$ and $r_2$ cannot be chosen independent of $k$. Indeed, if $\textrm{Ann}(x, r_1, r_2)$ were uniformly small with respect to $k$, then, one has that $\tau(f)<\eps$ for any $\tau\in \textrm{T}(A)$ and any centre element $f$ in $A_i$ with $\textrm{supp}(f)\subseteq \textrm{Ann}(x, r_1, r_2)$ and $0\leq f\leq 1$. The following is an example of AH-algebra $A$ such that this statement does not hold. 

In fact, we shall construct an AH-algebra $A=\varinjlim(A_n, \varphi_{n, n+1})$ with real rank zero, and find $\delta>0$ such that for any $A_n$ and any subset $D$ in the spectrum of $A_n$ with nonempty interior, there exists $\tau\in\textrm{T}(A)$ with $\mu_\tau(D)>\delta$. In particular, $A$ does not have (SBP) (see Remark \ref{rem-sbp}), but has (SRBP).

Let $m_1, ..., m_n,...$ be a sequence of positive integers such that 
$$\frac{m_1}{m_1+1}\cdot\frac{m_2}{m_2+2} \cdots \frac{m_n}{m_n+2^{n-1}}\geq\frac{1}{2}\quad\textrm{for any $n$}.$$
Set $c_n=(m_1+1)(m_2+2)\cdots(m_{n-1}+2^{n-1})$ (assume $r_1=1$). Consider $A_n=\bigoplus_{2^{n-1}}\textrm{M}_{c_n}(\textrm{C}([0, 1]))$, and define the map $\varphi_{n, n+1}$ as the following: For any $1\leq k\leq 2^{n}$, set $i=\lfloor k/2\rfloor\mod 2$. The eigenvalue maps for the restriction of $\varphi_{n, n+1}$ to $A_n$ and $k^{\textrm{th}}$ copy of $A_{n+1}$ are 
\begin{eqnarray*}
\lambda_1(x)=\frac{x+i}{2}&\in& \textrm{$(\lfloor \frac{k}{2}\rfloor+1)^{\textrm{th}}$ copy of $[0, 1]$},\\
&\vdots&\\
\lambda_{m_n}(x)=\frac{x+i}{2}&\in& \textrm{$(\lfloor \frac{k}{2}\rfloor+1)^{\textrm{th}}$ copy of $[0, 1]$},\\
\lambda_{m_n+1}(x) = \frac{1}{2}&\in& \textrm{first copy of $[0, 1]$},\\
&\vdots&\\
\lambda_{m_n+2^{n-1}}(x) = \frac{1}{2}&\in& \textrm{$(2^{n-1})^{\textrm{th}}$ copy of $[0, 1]$}.
\end{eqnarray*}

Then the inductive limit $A=\varinjlim(A_n, \varphi_{n, n+1})$ is an AI-algebra. A routine calculation shows that $A$ is simple. Moreover, an approximate intertwining argument shows that $A$ is in fact an AF-algebra.  Hence $A$ has real rank zero, and satisfies the condition of Theorem \ref{RR0-SBP}. 

One asserts the following: Consider any $A_n$ and any copy of $[0, 1]$ in its spectrum. For any nonzero $l\in\mathbb{N}$ and any $0\leq j\leq 2^l-1$, there exists $\tau\in\textrm{T}(A)$ such that $$\mu_\tau([\frac{j}{2^l}, \frac{j+1}{2^l}])>\frac{1}{2}.$$ In particular, any subset with nonempty interior---for example $\textrm{Ann}(x, r_1, r_2)$---cannot be uniformly small. Indeed, for any $d>l$, there is a copy of $[0, 1]$ in the spectrum of $A_{n+d}$ such that the eigenvalue maps of the restriction $\varphi_{n, n+d}$ to $A_n$ and the corresponding direct summand of $A_{n+d}$ are 
\begin{eqnarray*}
\lambda_1(x)=\frac{x+j}{2^d}&\in& [\frac{j}{2^l}, \frac{j+1}{2^l}],\quad x\in[0, 1],\\
&\vdots&\\
\lambda_{L}(x)=\frac{x+j}{2^d}&\in& [\frac{j}{2^l}, \frac{j+1}{2^l}], \quad x\in[0, 1],\\
\lambda_{L+1}(x) &=& \textrm{constant},\\
&\vdots&\\
\lambda_{C}(x) &=& \textrm{constant},
\end{eqnarray*} 
where ${L}=m_n\cdots m_{n+d-1}$ and $C=(m_n+2^{n})\cdots(m_{n+d-1}+2^{n+d-1})$. Then there exists $x_d\in [0, 1]$ such that $$\mu_{\tau_{x_d}}([\frac{j}{2^l}, \frac{j+1}{2^l}])\geq\frac{L}{C}\geq\frac{1}{2}.$$

Extend $\tau_{x_d}$ to a state of $A$ and consider an accumulation point $\tau$ of $\{\tau_{x_d}; d=l, l+1, ...\}$. Then $\tau$ is a tracial state of $A$ and $$\tau([\frac{j}{2^l}, \frac{j+1}{2^l}])\geq\frac{1}{2}.$$ This proves the assertion.
\end{rem}

\section{A local approximation theorem}

Let $X$ be a compact Hausdorff space. Since $\mathcal{D}(\alpha)\leq\mathrm{dim}(X)$ for any open cover $\alpha$ of $X$, one has that if an AH-algebra $A$ has slow dimension growth, then $A$ has mean dimension zero. 

In this section, let us consider AH-algebras with diagonal maps. Recall that a connecting map $\varphi_{i, j}: A_i\to A_j$ is an diagonal map if there exists a unitary $U\in A_j$ such that the restriction of $\mathrm{ad}(U)\circ\varphi_{i, j}$ to $A_{i, l}$ and $A_{j, k}$ has the form $$f\mapsto\left(
\begin{array}{ccc}
f\circ\lambda_1 & &\\
&\ddots&\\
 && f\circ\lambda_n
\end{array}
\right)
$$
for some continuous maps $\lambda_1,...,\lambda_n : X_{j, k}\to X_{i, l}$. 

{
\begin{rem}
It is interesting to point out that in \cite{EHT-sr1} the authors showed that any simple AH-algebra with diagonal maps has stable rank one, i.e., invertible elements are dense. This generalized the conclusion of Proposition 10 of \cite{Vill-perf} from the first type of Villadsen algebras to arbitrary simple AH-algebras with diagonal maps. So, is it possible that stable rank one exactly characterizes simple AH-algebras with diagonal maps? That is, given an arbitrary simple AH-algebra $A$, if $A$ has stable rank one, does $A$ has to be an AH-algebra with diagonal maps?
\end{rem}
}

\begin{rem}
If $A$ is an AH-algebra with diagonal maps, then one can assume that all the connecting maps has the form above. 

For any $f\in A_i$, if there are open sets $U_1,...,U_n$ such that $\norm{f(x)-f(y)}\leq\eps$ for any $x, y\in U_i$ $1\leq i\leq n$, then, for any $\displaystyle{x, y\in\bigcap\lambda^{-1}(U_i)}$, one has that $\norm{\varphi(f)(x)-\varphi(f)(y)}\leq\eps.$ Therefore, for any $f\in A_i$, any $\eps>0$, and any open cover $\alpha$ of $X_i$ satisfying $\norm{f(x)-f(y)}\leq\eps$, $\forall x, y\in U$, $\forall U\in\alpha$, one has 
$$\norm{\varphi_{i, j}(f)(x)-\varphi_{i, j}(f)(y)}\leq\eps,\quad\forall x, y\in V,\ \forall V\in\varphi_{i, j}(\alpha).$$
\end{rem}

\begin{thm}\label{LOA}
Let $A$ be an AH-algebra with diagonal map, and denote by $\gamma$ the mean dimension of $A$. Then for any finite subset $\F\subseteq A$ and any $\eps_1>0$ and $\eps_2>0$, there exists a unital sub-C*-algebra $$C\cong\bigoplus p_i\MC{r_i}{\Omega_i}p_i\subseteq A$$ such that $\F\subseteq_{\eps_1} C$, and $$\frac{\mathrm{dim}{\Omega_i}}{\mathrm{rank}(p_i)}<\gamma+\eps_2.$$
\end{thm}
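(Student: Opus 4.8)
The plan is to extract the local approximation directly from the definition of mean dimension, using the diagonal structure of the connecting maps to convert an open cover with small $\mathcal{D}$ into an honest $*$-homomorphic factorization through a homogeneous $C^*$-algebra of controlled dimension. First I would fix $\F$, $\eps_1$, $\eps_2$, and choose $i$ large enough that $\F$ is within $\eps_1/3$ of $\varphi_{i,\infty}(A_i)$ and that $\gamma_i(A) < \gamma + \eps_2/2$; replace $\F$ by a finite set $\F' \subseteq A_i$ with $\varphi_{i,\infty}(\F')$ approximating $\F$. Next, using the remark just before the theorem (diagonal maps preserve oscillation control along covers), I would pick a single finite open cover $\alpha_i$ of $X_i$, with each member inside one connected component, such that $\norm{f(x)-f(y)} \le \eps_1/3$ for all $x,y$ in a common member of $\alpha_i$ and all $f \in \F'$. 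By Definition \ref{mdim} of $\gamma_i$, there is $j$ (as large as we like) with
\[
\max_{1\le k\le h_j}\frac{\mathcal{D}(\varphi_{i,j}^k(\alpha_i))}{n_{j,k}} < \gamma + \eps_2.
\]

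Then, for each $k$, Proposition \ref{comp-map} gives a $\varphi_{i,j}^k(\alpha_i)$-compatible continuous map $g_k : X_{j,k} \to K_k$ with $\dim K_k \le \mathcal{D}(\varphi_{i,j}^k(\alpha_i)) < (\gamma+\eps_2) n_{j,k}$. The key step is to build from $g_k$ a unital sub-$C^*$-algebra of $A_{j,k}$ of the desired form. Because the cover $\varphi_{i,j}^k(\alpha_i)$ is exactly the join of the pullbacks of $\alpha_{i,l}$ under all the eigenvalue maps $\lambda_{i,j}^{l,k}(m)$, a $\varphi_{i,j}^k(\alpha_i)$-compatible map $g_k$ "sees" all the eigenvalue coordinates only up to the coarseness of $\alpha_i$. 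Concretely, I would subordinate a partition of unity to $\varphi_{i,j}^k(\alpha_i)$, use it together with $g_k$ to define a continuous map $\Omega_{j,k} := (g_k \times (\text{selection of }\alpha_i\text{-representatives}))(X_{j,k})$ into a compact space of dimension $\le (\gamma+\eps_2)n_{j,k}$, and pull back the homogeneous algebra $p_{j,k}\mathrm{M}_{r_{j,k}}(\mathrm{C}(\Omega_{j,k}))p_{j,k}$ along this map; since $\Gamma_{i,j}^{l,k}$ factors (up to the oscillation tolerance of $\alpha_i$) through $\Omega_{j,k}$, the image of $\varphi_{i,j}(\F')$ lies within $\eps_1/3 + \eps_1/3 < \eps_1$ of this subalgebra, restricted to $A_{j,k}$. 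Assembling over $k$ gives $C = \bigoplus_k p_k\mathrm{M}_{n_k}(\mathrm{C}(\Omega_k))p_k$ with $\dim\Omega_k / \operatorname{rank}(p_k) < \gamma+\eps_2$, and composing with $\varphi_{j,\infty}$ lands $C$ inside $A$ with $\F \subseteq_{\eps_1} C$.

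The main obstacle I expect is making the "selection of $\alpha_i$-representatives" precise and genuinely continuous: the eigenvalue maps $\lambda_{i,j}^{l,k}(m)$ may permute or collide, so one cannot just track them individually as continuous maps into $X_i$. The honest device is to use the $\varphi_{i,j}^k(\alpha_i)$-compatibility of $g_k$ to replace each $f \circ \lambda_{i,j}^{l,k}(m)$, which is constant to within $\eps_1/3$ on each member of $\varphi_{i,j}^k(\alpha_i)$, by a function that factors through $g_k$ — i.e.\ one builds the approximating homomorphism $A_i \to A_{j,k}$ directly as $f \mapsto (\text{a matrix of functions of } g_k)$ using a partition of unity subordinate to the cover $\beta$ on $g_k(X_{j,k})$ witnessing compatibility, rather than literally factoring the $\lambda$'s. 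Checking that this yields a $*$-homomorphism (not merely a linear map) into $p_{j,k}\mathrm{M}_{r_{j,k}}(\mathrm{C}(g_k(X_{j,k})))p_{j,k}$ that is $\eps_1$-close to $\varphi_{i,j}$ on $\F'$, and that one can arrange it unital, is the technical heart; the dimension and oscillation bookkeeping, and the passage to the limit via $\varphi_{j,\infty}$, are then routine.
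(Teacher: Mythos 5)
Your overall route is the paper's route: fix an oscillation cover $\alpha$ of $X_i$ for $\F'$, use Definition \ref{mdim} to find $j$ with $\mathcal{D}(\varphi_{i,j}^k(\alpha))/n_{j,k}<\gamma+\eps_2$, note that diagonal maps propagate the oscillation bound to the pulled-back cover, and then collapse $X_{j,k}$ onto a space of dimension $\mathcal{D}(\varphi_{i,j}^k(\alpha))$ by a partition of unity subordinate to a refinement $\beta$ realizing $\mathrm{ord}(\beta)=\mathcal{D}(\varphi_{i,j}^k(\alpha))$ together with point evaluations. (The paper works with the nerve $\Delta$ of $\beta$ and the canonical map $\xi:X_{j,k}\to\Delta$ rather than with an abstract $\varphi_{i,j}^k(\alpha)$-compatible map $g_k$ from Proposition \ref{comp-map}, but these are the same device.)

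The one place your plan would fail as written is the step you yourself single out as the technical heart: checking that $f\mapsto\sum_V\psi_V(g_k(x))\,\varphi_{i,j}(f)(x_V)$ is a $*$-homomorphism. It is not (point evaluations at distinct $x_V$ destroy multiplicativity), and no such homomorphism $A_i\to A_{j,k}$ is needed. The missing idea is to run the homomorphism in the opposite direction: pullback along $\xi$ (or along your $g_k$) gives a genuine unital $*$-homomorphism $\Xi:\MC{n_{j,k}}{\Delta}\to A_{j,k}$, $h\mapsto h\circ\xi$, whose image is automatically a unital sub-C*-algebra isomorphic to $\MC{n_{j,k}}{\Omega_k}$ with $\Omega_k=\xi(X_{j,k})\subseteq\Delta$ closed, hence $\mathrm{dim}(\Omega_k)\leq\mathrm{ord}(\beta)$. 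The completely positive map $\Theta(f)(x)=\sum_V\psi_V(x)f(x_V)$ is then only used to verify approximate containment: writing $\breve{f}$ for the affine extension to $\Delta$ of $[U_V]\mapsto f(x_V)$, one has $\Theta(f)=\Xi(\breve{f})$, so $\Theta(f)$ lies in $C:=\mathrm{Im}(\Xi)$ and $\norm{\Theta(\varphi_{i,j}(f))-\varphi_{i,j}(f)}<\eps_1$ by the oscillation bound. With that substitution your argument closes, and the dimension bookkeeping you give is correct.
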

\begin{proof}
{Write $A=\varinjlim A_i$ with $$A_i=\bigoplus_{l=1}^{h_i} p_{i, l} \textrm{M}_{r_{i, l}}(\textrm{C}(X_{i, l})) p_{i, l}.$$ Denote by $X_i$ the disjoint union of $X_{i, 1}, ..., X_{i, h_i}$, and denote by $n_{i, l}$ the rank of $p_{i, l}$.}

Without loss of generality, one may assume that {$1_A\in \F\subseteq A_1$ and each element of $\mathcal F$ has norm at most one, and $\eps_1<1$}. Choose an open cover $\alpha$ of $X_1$ such that  for any $U\in \alpha$, any $x, y\in U$, and any $f\in \F$, one has $$\norm{f(x)-f(y)}<\frac{\eps_1}{20}.$$ Since $A$ has mean dimension $\gamma$, there exists $A_j$ such that $$\frac{\mathcal{D}(\alpha_{1, j}^k)}{n_{j, k}}<\gamma+\eps_2$$ for any $k$. Without loss of generality, assume that $j=2$.

On each $X_{2, k}$, consider an open cover $\beta$ with $\beta\succ\alpha_{1,2}^k$ with $$\mathrm{ord}(\beta)=\mathcal{D}(\alpha_{1,2}^k).$$

Note that for any $U\in \beta$, any $x, y\in U$, and any $f\in\F_1$,
\begin{equation*}
\norm{\varphi_{1, 2}^k(f)(x)-\varphi_{1, 2}^k(f)(y)}<\frac{\eps_1}{20}
\end{equation*}

Choose a partition of unity $\phi_i: X_2\to[0, 1]$ subordinate to $\beta$, and choose $x_i\in U_i$ for each $i$. Then, for any $x\in X_2$ and any $f\in\F$, one has
\begin{eqnarray*}
\norm{\sum_i\phi_i(x)\varphi_{1, 2}(f)(x_i)-\varphi_{1, 2}(f)(x)}&=&\norm{\sum_i\phi_i(x)\varphi_{1, 2}(f)(x_i)-\sum_{i}\phi_i(x)\varphi_{1, 2}(f)(x)}\\
&=&\norm{\sum_i\phi_i(x)(\varphi_{1, 2}(f)(x_i)-\varphi_{1, 2}(f)(x))}\\
&<&\frac{\eps_1}{20}.
\end{eqnarray*}

Define a c.p. map $\Theta:  \bigoplus_k\MC{r_{2, k}}{X_{2, k}} \to  \bigoplus_k\MC{r_{2, k}}{X_{2, k}}$ by $$f(x)\mapsto \sum_i\phi_i(x)f(x_i),$$ and therefore, one has 

\begin{equation*}
\norm{\Theta(f)-f}<\frac{\eps_1}{20},\quad\forall f\in\F.
\end{equation*}

Consider the $\mathrm{ord}(\beta)$-dimensional simplicial complex $\Delta$ constructed as follows: The vertices of $\Delta$ correspond to the elements of $\beta$, the $s$-dimensional simplices correspond to all $U_1, ..., U_s$ with ${\bigcap U_i\neq \textrm{\O}}$. The point in each simplex $\{U_1, ..., U_s\}$ can be parameterized as 
$$\sum_{i=1}^s\lambda_1[U_i]\quad\textrm{with $\lambda_i\geq 0$ and $\sum_i\lambda_i=1.$}$$
Define a map $\xi: X_2\to \Delta$ by $$x\mapsto \sum_{x\in U_i\in\beta}\phi_i(x)[U_i],$$ and it induces a *-homomorphism $$\Xi: \bigoplus_k\MC{r_{2, k}}{\Delta_{k}}\to {\bigoplus_k\MC{r_{2, k}}{X_{2, k}}}.$$

For each $f\in A_2$, define function $\breve{f}$ on $\Delta$ as follows: On vertices, set $$\breve{f}([U_i])={f}(x_i)$$ and extend it linearly to the simplicial complex. Then, it is clear that $$\Xi(\breve{f})=\Theta(f).$$ Therefore, the image of $\Theta$ is contained in the image of $\Xi$, and hence $\F$ can be approximated by elements in the image of $\Xi$ within $\frac{\eps_1}{20}$.

Identify the image of $\Xi$ with $$C'=\bigoplus_k\MC{r_{k}}{\Omega_{k}},$$ where $r_k=r_{2, k}$ and $\Omega_k$ the corresponding closed subset of $\Delta_{2, k}$. Since $1_{A}\in\mathcal F$, there is a projection $p'\in C'$ 
such that $$\norm{1_{A_2}-p'}<\frac{\eps_1}{10},$$
and hence
$$\norm{f-p'(\Xi(\breve{f}))p'}=\norm{1_{A_2}f1_{A_2}-p'(\Xi(\breve{f}))p'}<\frac{3}{20}\eps_1,\quad\forall f\in \mathcal F.$$
Therefore, any element of $\cal F$ in fact can be approximated by an element in $p'C'p'$ within $\frac{3}{20}\eps_1$. Since $\eps_1<1$, there is a unitary $u\in A_2$ such that 
$$u^*p'u=1_{A_2}\quad\textrm{with}\quad \norm{1-u}<\frac{\sqrt{2}}{10}\eps_1<\frac{\eps_1}{5}.$$

Consider the C*-algebra $$C:=u^*(p'C'p')u.$$ It is then a unital sub-C*-algebra of $A_2$, and each element of $\mathcal F$ can be approximated by an element of $C$ up to $\frac{3}{20}\eps_1+(1+\frac{3}{20}\eps_1)\frac{2}{5}\eps_1<\eps_1$.

Since $p'$ is unitarily equivalent to $1_{A_2}$, one has that $p'=\bigoplus_k p_k$ with $p_k$ a projection in $\textrm{M}_{r_k}(\textrm{C}(\Omega_k))$ with rank $n_{j, k}$. Therefore $C$ is isomorphic to $$\bigoplus p_k\MC{r_k}{\Omega_k}p_k.$$

Note that each $\Omega_{k}$ is a closed subset of $\Delta_{2, k}$, and hence for each $k$,
\begin{equation*}
\mathrm{dim}(\Omega_{k})\leq\mathrm{ord}(\beta)=\mathcal{D}(\alpha_{1,2}^k). 
\end{equation*}
Therefore, one has $$\frac{\mathrm{dim}(\Omega_k)}{\mathrm{rank}(p_k)}\leq \frac{\mathcal{D}(\alpha_{1,2}^k)}{n_k}<\gamma+\eps_2,$$ as desired.
\end{proof}

If an AH-algebra with diagonal map has mean dimension zero, then, it has local slow dimension growth by Theorem \ref{LOA}. Then the following theorem states that the strict order on projections is in fact determined by their traces.

\begin{thm}\label{K0-comp}
Let $A$ be a {simple} AH-algebra with diagonal map. If $A$ has mean dimension zero, then, for any projections $p$ and $q$ in $A$, if $\tau(p)>\tau(q)$ for any tracial state $\tau$ of $A$, the projection $q$ is Murray-von Neumann equivalent to a sub-projection of $p$. 
\end{thm}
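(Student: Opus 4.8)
The plan is to reduce the statement about projections to the strict comparison of positive elements, using the local approximation Theorem~\ref{LOA} together with standard comparison theory in homogeneous C*-algebras of low dimension ratio. First I would observe that if $p$ and $q$ are projections with $\tau(p)>\tau(q)$ for all $\tau\in\mathrm{T}(A)$, then, since $\mathrm{T}(A)$ is weak-* compact and $\tau\mapsto\tau(p)-\tau(q)$ is continuous, there is $\eta>0$ with $\tau(p)-\tau(q)>\eta$ for all $\tau$. The goal is to produce a partial isometry $v\in A$ with $v^*v=q$ and $vv^*\le p$. Because $A$ has stable rank one (a fact noted in the excerpt for simple AH-algebras with diagonal maps), it suffices to show $q\precsim p$ in the Cuntz sense; for projections in a stable-rank-one C*-algebra, Cuntz subequivalence of projections is exactly Murray--von Neumann subequivalence (this is the standard translation, via Rørdam's results on the Cuntz semigroup).

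Next I would apply Theorem~\ref{LOA} with $\F=\{p,q\}$, $\eps_1$ small (say $\eps_1<1/4$ so that the approximants can be perturbed back to genuine projections), and $\eps_2$ small relative to $\eta$ (for instance so that $\gamma+\eps_2=\eps_2<\eta^2$ or whatever the comparison lemma below requires), using $\gamma=0$. This yields a unital sub-C*-algebra $C\cong\bigoplus_i p_i\MC{n_i}{\Omega_i}p_i\subseteq A$ with $\dim(\Omega_i)/\mathrm{rank}(p_i)<\eps_2$ and elements $p',q'\in C$ self-adjoint with $\norm{p-p'}<\eps_1$, $\norm{q-q'}<\eps_1$. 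Standard functional calculus lets me replace $p',q'$ by genuine projections $\tilde p,\tilde q\in C$ with $\norm{p-\tilde p}<2\eps_1$, $\norm{q-\tilde q}<2\eps_1$, hence $\tilde p\sim p$ and $\tilde q\sim q$ (unitarily equivalent in $A$ for $\eps_1$ small). Crucially, for any tracial state $\sigma$ on $C$, extending (via a state extension argument as in Remark~\ref{tr-ocap} / Lemma~\ref{flat}) or more simply using that traces on $A$ restrict to traces on $C$ and approximate all of $\mathrm{T}(C)$ closely enough, I get $\sigma(\tilde p)-\sigma(\tilde q)>\eta/2$ for all $\sigma\in\mathrm{T}(C)$ — here I need to be a little careful that $\mathrm{T}(C)$ may be larger than the restrictions of $\mathrm{T}(A)$, but the inequality on rank functions at each point $x\in\Omega_i$ is what matters: $\mathrm{rank}(\tilde p(x))-\mathrm{rank}(\tilde q(x))\ge$ (roughly) $\eta\,\mathrm{rank}(p_i)$, which after shrinking $\eps_1$ is a strict, uniformly positive inequality of integer-valued rank functions.

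Now the work is purely inside $C=\bigoplus_i p_i\MC{n_i}{\Omega_i}p_i$. On each summand, $\tilde q(x)$ and $\tilde p(x)$ are projections in $\Mat{n_i}$ of constant rank (after restricting to connected components) with $\mathrm{rank}(\tilde q(x))<\mathrm{rank}(\tilde p(x))$ for all $x$, and moreover the gap exceeds $\tfrac{1}{2}\dim(\Omega_i)$ since $\eta\,\mathrm{rank}(p_i)>\dim(\Omega_i)$ by choice of $\eps_2$. The classical vector-bundle comparison result (if $E,F$ are vector bundles over a finite CW complex $\Omega$ of dimension $d$ and $\mathrm{rank}\,F+d/2<\mathrm{rank}\,E$ fibrewise — or even $\mathrm{rank}\,F+\dim\Omega\le\mathrm{rank}\,E$ with a bit more care — then $F$ is isomorphic to a subbundle of $E$; this is the standard argument used throughout AH-classification, e.g. in Phillips's and Gong's work) then gives a partial isometry $w\in p_i\MC{n_i}{\Omega_i}p_i$ with $w^*w=\tilde q|_i$ and $ww^*\le\tilde p|_i$. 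Assembling over $i$ gives $w\in C$ with $w^*w=\tilde q$, $ww^*\le\tilde p$, so $\tilde q$ is Murray--von Neumann subequivalent to $\tilde p$ in $C\subseteq A$; combined with $\tilde p\sim p$, $\tilde q\sim q$, we conclude $q$ is equivalent to a subprojection of $p$.

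The main obstacle is the bookkeeping in the second paragraph: ensuring that the strict trace inequality $\tau(p)>\tau(q)$ on $A$, which a priori only controls the tracial states coming from $A$, upgrades to a uniform fibrewise rank inequality on the spaces $\Omega_i$ with enough room ($\text{gap}>\dim\Omega_i$) to run the vector-bundle argument. The cleanest route is to phrase everything in terms of the normalized traces $\mathrm{Tr}(\cdot(x))/n_i$: since the point evaluations at $x\in\Omega_i$ give tracial states of $C$, and these — when pulled back through $\varphi_{2,\infty}\circ\Xi$ and composed with traces of $A$ — are controlled by $\mathrm{T}(A)$ via the orbit-capacity/trace identification, one gets $\tfrac{1}{n_i}\mathrm{Tr}(\tilde p(x)) - \tfrac{1}{n_i}\mathrm{Tr}(\tilde q(x)) > \eta - O(\eps_1)$, and then multiplying by $n_i$ and comparing with $\dim\Omega_i < \eps_2 n_i \ll \eta n_i$ closes the argument. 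If one wants to avoid even this, an alternative is to cite the already-established strict comparison of positive elements (Theorem~\ref{MD0-Comp}) directly and then invoke stable rank one to pass from Cuntz subequivalence of the projections to Murray--von Neumann subequivalence — but presenting the self-contained vector-bundle argument is more in the spirit of this section.
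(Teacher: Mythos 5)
Your proposal is correct and follows essentially the same route as the paper: approximate $\{p,q\}$ by a homogeneous subalgebra $C$ with small dimension ratio via Theorem~\ref{LOA}, transfer the uniform trace gap to a fibrewise rank gap exceeding $\dim\Omega_i$, and conclude with the classical vector-bundle embedding theorem (the paper cites Theorems (9)1.2 and (9)1.5 of \cite{Husemoller-Fibre-Bundles}). The one step you flag as the main obstacle --- upgrading the inequality from traces of $A$ to all tracial states of $C$ --- is exactly where the paper does its real work: it argues by contradiction over a sequence of approximating subalgebras $C_n$, extends a putative sequence of bad traces $\tau_n$ on $C_n$ to states on $A$, and shows an accumulation point is a tracial state of $A$ violating the hypothesis.
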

\begin{proof}
Since the simplex of tracial states is compact {and $A$ is simple}, there exists $\delta>0$ such that $\tau(p)>\tau(q)+\delta$ for any tracial state $\tau$ on $A$.

We assert that one can find a homogeneous C*-algebra $C$ satisfying Theorem \ref{LOA} such that $\{p, q\}\subseteq_{1/4} C$ and for any tracial state $\tau$ on $C$, one has $$\tau(p')>\tau(q')+\delta,$$ where $p'$ and $q'$ are projections in $C$ which are close to $p$ and $q$ within $1/2$ respectively.

If this were not true, there is a sequence of {unital sub-C*-algebras $\{C_n\}$ which satisfies the conclusion of Theorem \ref{LOA}} such that for any $n$, there is a tracial state $\tau_n$ on $C_n$ with $$\tau_n(p')\leq\tau_n(q')+\delta,$$ and moreover, {the union of $\{C_n\}$ is dense in $A$}. Extend each $\tau_n$ to a state of $A$, and still denote it by $\tau_n$.

Pick an accumulation point $\tau_\infty.$ It is clear that $\tau_\infty$ is a trace on $A$. However, $$\tau_\infty(p)=\lim\tau_n(p')\leq\lim\tau_n(q')+\delta=\tau_\infty(q)+\delta,$$ which contradicts  the assumption. This proves the assertion.

Therefore, by Theorem \ref{LOA}, there exists {$C\cong\bigoplus p_i\MC{r_i}{\Omega_i}p_i\subseteq A$ } with $$\frac{\mathrm{dim}{\Omega_i}}{n_i}<\delta,$$ {where $n_i=\mathrm{rank}(p_i)$}
such that $\{p, q\}\subseteq_{1/4} C$, and for any tracial state $\tau$ on $C$, 
\begin{equation}\label{cp-trace0}
\tau(p')>\tau(q')+\delta,
\end{equation} where $p'$ and $q'$ are projections in $C$ which are close to $p$ and $q$ within $1/2$ respectively. Note that the projections $p$ and $q$ are unitary equivalent to the projections $p'$ and $q'$ respectively. Therefore, in order to prove the theorem, it is enough to show that $q'$ is Murray- von Neumann equivalent to a sub-projection of $p'$.

Indeed, it follows from \eqref{cp-trace0} that $$\mathrm{rank}(p'(x))>\mathrm{rank}(q'(x))+\delta n_i\quad\textrm{if}\ x\in \Omega_i.$$ Since $\mathrm{dim}(\Omega_i)<n_i\delta$, by Theorem (9)1.2 and Theorem (9)1.5 of \cite{Husemoller-Fibre-Bundles}, one has that $q'$ is Murray-von Neumann equivalent to a sub-projection of $p'$, as desired.
\end{proof}

\begin{cor}
Any simple unital real rank zero AH-algebra with diagonal maps (without assuming slow dimension growth) has slow dimension growth.
\end{cor}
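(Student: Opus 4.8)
The plan is to derive this corollary by combining the results already established in the excerpt with the classification machinery for AH-algebras. First I would observe that by Theorem \ref{RR0-SBP}, any AH-algebra of real rank zero has mean dimension zero; in particular a simple unital real rank zero AH-algebra $A$ with diagonal maps has $\gamma(A)=0$. Hence Theorem \ref{K0-comp} applies, and the strict order on projections of $A$ is determined by traces: whenever $\tau(p)>\tau(q)$ for all $\tau\in\mathrm{T}(A)$, one has $q$ Murray--von Neumann equivalent to a subprojection of $p$.

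Next I would invoke the local approximation theorem, Theorem \ref{LOA}: since $\gamma(A)=0$, for any finite subset $\F$ and any $\eps_1,\eps_2>0$ there is a unital sub-C*-algebra $C\cong\bigoplus p_i\MC{n_i}{\Omega_i}p_i\subseteq A$ with $\F\subseteq_{\eps_1}C$ and $\dim(\Omega_i)/\mathrm{rank}(p_i)<\eps_2$. Thus $A$ has \emph{local} slow dimension growth. The key point is to upgrade this to a genuine (global) slow-dimension-growth inductive decomposition. Here I would use the result of Huaxin Lin in \cite{Lin-AHRR0} referred to in the introduction: a simple unital real rank zero AH-algebra with the strict order on projections determined by traces is a tracially AF algebra (indeed, the local approximation by finite-dimensional-over-low-dimensional building blocks, together with real rank zero so that the cutdown projections can be chosen to nearly commute with $\F$ and have small complement measured by traces, is exactly the tracial approximation hypothesis). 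Being tracially AF and AH, $A$ falls under the classification of \cite{EG-RR0AH}, so $A$ is a simple unital real rank zero AH-algebra with slow dimension growth; in particular it admits an inductive decomposition with dimension growth zero, i.e.\ it has slow dimension growth.

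Alternatively, and more directly, one can bypass the classification black box: Theorem \ref{LOA} produces, for an exhausting sequence of finite sets and shrinking tolerances, subalgebras $C_n=\bigoplus_i p_{n,i}\MC{n_{n,i}}{\Omega_{n,i}}p_{n,i}$ with $\dim(\Omega_{n,i})/\mathrm{rank}(p_{n,i})\to 0$; a standard intertwining/reindexing argument (using that the $C_n$ can be taken to approximately contain each other, which requires applying Theorem \ref{LOA} to finite sets that include generators of the previously constructed $C_{n-1}$) then realizes $A$ as an inductive limit of such homogeneous building blocks with dimension growth zero. Either route yields the statement.

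The main obstacle is the passage from \emph{local} slow dimension growth (a statement about approximating finite subsets, as delivered by Theorem \ref{LOA}) to \emph{global} slow dimension growth (the existence of a single inductive decomposition with dimension growth zero). This is where either Lin's result from \cite{Lin-AHRR0} identifying $A$ as tracially AF, followed by the classification of \cite{EG-RR0AH}, or a careful Elliott-style approximate intertwining of the locally approximating subalgebras, must be brought in; the real rank zero hypothesis is what makes the projections in the $C_n$'s behave well (so that the approximating algebras can be taken unital with controlled embeddings). The genuinely new input---mean dimension zero forcing local slow dimension growth via the diagonal-map structure---is already contained in Theorems \ref{RR0-SBP} and \ref{LOA}, so the corollary is essentially an assembly of these pieces with the cited literature.
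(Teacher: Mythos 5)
Your proposal is correct and follows essentially the same route as the paper: Theorem \ref{RR0-SBP} gives mean dimension zero, Theorem \ref{K0-comp} gives that the strict order on projections is determined by traces, Lin's theorem from \cite{Lin-AHRR0} then yields that $A$ is tracially AF, and the classification of tracially AF algebras gives slow dimension growth. The additional discussion of Theorem \ref{LOA} and the sketched intertwining alternative are not needed for (and not part of) the paper's argument, but the main chain of reasoning matches.
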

\begin{proof}
By Theorem \ref{RR0-SBP}, such an AH-algebra has mean dimension zero. It then follows from Theorem \ref{K0-comp} that the $\Kzero$-group has the strict comparison, and hence it is tracially AF by Theorem 2.1 of \cite{Lin-AHRR0} or Theorem 1.5 of \cite{Ng-LAH-RR0}. By the classification of tracially AF algebras, it has slow dimension growth.
\end{proof}

\section{Mean dimension zero and AH-algebras with diagonal maps}

In this section, the Cuntz semigroup of a {simple unital} AH-algebra $A$ with diagonal maps {and mean dimension zero is} calculated to be $\mathrm{V}(A)\sqcup\mathrm{SAff}(\mathrm{T}(A))$. Together with a recent result of W. Winter, this implies that $A$ is $\mathcal Z$-stable, and hence $A$ is an AH-algebra without dimension growth.

The following Lemma is due to M. R{\o}rdam. See 4.3 of \cite{Toms-PLMS} for a proof.
\begin{lem}\label{Rordam1}
Let $A$ be a C*-algebra, and let $\eps>0$. Let $b$ and $b'$ be positive elements with $\norm{b'-(b-\eps/2)_+}\leq\eps/4$. Set $\tilde{b}=(b'-\eps/4)_+$. Then one has that $\norm{b-\tilde{b}}<\eps$ and $$(b-\eps)_+\precsim\tilde{b}\precsim(b-\eps/2)_+\precsim b.$$
\end{lem}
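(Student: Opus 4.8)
The plan is to prove the three assertions -- the norm estimate and the three links of the subequivalence chain -- separately, in each case reducing to the elementary properties of $\precsim$ recorded in the list (1)--(5) after the first definition and to functional calculus in the commutative C*-subalgebras generated by the individual positive elements.

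\textbf{Norm estimate.} I would bound $\norm{b-\tilde b}$ by a triangle inequality routed through the intermediate elements $(b-\eps/2)_+$ and $b'$. Using $\norm{x-(x-\delta)_+}\le\delta$ for positive $x$ one gets $\norm{b-(b-\eps/2)_+}\le\eps/2$ and $\norm{b'-\tilde b}=\norm{b'-(b'-\eps/4)_+}\le\eps/4$, while $\norm{(b-\eps/2)_+-b'}\le\eps/4$ is the hypothesis; alternatively, applying the $1$-Lipschitz function $t\mapsto(t-\eps/4)_+$ to $b'$ and to $(b-\eps/2)_+$ shows $\norm{\tilde b-(b-3\eps/4)_+}\le\norm{b'-(b-\eps/2)_+}\le\eps/4$, and $\norm{(b-3\eps/4)_+-b}\le 3\eps/4$. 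Either way one obtains $\norm{b-\tilde b}<\eps$.

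\textbf{The subequivalence chain.} I would read it from right to left. The rightmost link $(b-\eps/2)_+\precsim b$ is immediate (it is an instance of the general fact, listed above, that $(a-\delta)_+\precsim a$). For the middle link $\tilde b\precsim(b-\eps/2)_+$: since $\tilde b=(b'-\eps/4)_+$, by the property that $a\precsim c$ is equivalent to $(a-\eta)_+\precsim c$ for all $\eta>0$ it suffices to show $(b'-\eps/4-\eta)_+\precsim(b-\eps/2)_+$ for every $\eta>0$, and this follows from the listed fact ``$\norm{x-c}<\delta\Rightarrow(x-\delta)_+\precsim c$'' applied with $x=b'$, $c=(b-\eps/2)_+$, $\delta=\eps/4+\eta$, since $\norm{b'-(b-\eps/2)_+}\le\eps/4<\eps/4+\eta$. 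For the leftmost link $(b-\eps)_+\precsim\tilde b$: first use the Lipschitz estimate above to get $\norm{(b-3\eps/4)_+-\tilde b}\le\eps/4$; then, writing $(b-\eps)_+=\big((b-3\eps/4)_+-\eps/4\big)_+$ and again invoking the $(\cdot-\eta)_+$ criterion, it suffices to show $\big((b-3\eps/4)_+-\eps/4-\eta\big)_+\precsim\tilde b$ for all $\eta>0$, which is the same listed fact applied with $x=(b-3\eps/4)_+$, $c=\tilde b$, $\delta=\eps/4+\eta$.

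\textbf{Main obstacle.} The only genuine subtlety is that the hypothesis provides the non-strict bound $\norm{b'-(b-\eps/2)_+}\le\eps/4$, whereas the handy implication ``$\norm{x-c}<\delta\Rightarrow(x-\delta)_+\precsim c$'' needs a strict inequality; this is circumvented uniformly in all three links by first cutting down by an arbitrary $\eta>0$ (using that $\precsim$ is detected on all the $(\cdot-\eta)_+$) so that the required strict inequality $\eps/4<\eps/4+\eta$ holds for free. Everything else is routine bookkeeping with the functional calculus applied to $b$, $b'$, and $(b-\eps/2)_+$, so I expect the argument to be short.
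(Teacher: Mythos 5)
The paper does not actually prove this lemma --- it attributes it to R{\o}rdam and cites Lemma 4.3 of \cite{Toms-PLMS} --- so there is no in-paper argument to compare against; your proof has to stand on its own. Most of it does: the reduction of each link to facts (2) and (4) from the preliminary list is the right mechanism, and your treatment of the norm estimate via the triangle inequality through $(b-\eps/2)_+$ and $b'$, of the right link, and of the middle link is correct.

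There is, however, a genuine gap in the left link (and in your ``alternative'' route to the norm estimate): you assert that applying the $1$-Lipschitz function $t\mapsto(t-\eps/4)_+$ preserves operator-norm distances, i.e.\ $\norm{(x-\eps/4)_+-(y-\eps/4)_+}\leq\norm{x-y}$ for noncommuting positive $x,y$. This is false in general: a scalar Lipschitz function with a kink, such as $t\mapsto t_+$ or $t\mapsto\abs{t}$, is \emph{not} operator Lipschitz (the optimal constant grows logarithmically with the dimension; operator Lipschitz functions must in particular be differentiable). Since $b'$ and $(b-\eps/2)_+$ need not commute, the inequality $\norm{\tilde{b}-(b-3\eps/4)_+}\leq\eps/4$ does not follow, and the stated justification of $(b-\eps)_+\precsim\tilde{b}$ collapses. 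Fortunately the repair stays entirely inside your own framework: from the (valid) triangle-inequality bound $\norm{b-\tilde{b}}\leq\eps$ and fact (4) with $\delta=\eps+\eta$ one gets $(b-\eps-\eta)_+\precsim\tilde{b}$ for every $\eta>0$, and since $(b-\eps-\eta)_+=\bigl((b-\eps)_+-\eta\bigr)_+$, fact (2) yields $(b-\eps)_+\precsim\tilde{b}$ directly, with no Lipschitz estimate needed. A last pedantic point: the triangle inequality only gives $\norm{b-\tilde{b}}\leq\eps$, not $<\eps$; with the non-strict hypothesis $\norm{b'-(b-\eps/2)_+}\leq\eps/4$ the strict conclusion can actually fail (take $A=\Comp$, $b=1$, $\eps=1/2$, $b'=5/8$, so $\tilde{b}=1/2$ and $\norm{b-\tilde{b}}=\eps$), so this is an imprecision in the statement rather than in your argument; the non-strict bound is all that is used downstream.
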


\begin{defn}
An ordered abelian semigroup $(W, +, \leq)$ is said to be almost unperforated if for any element $a$ and $b$, if $(n+1)a\leq n b$ for some $n\in\mathbb{N}$, then $a\leq b$.
\end{defn}

One then has the following lemma.
\begin{lem}[Corollary 4.6 of \cite{Ror-Z-stable}]
Let $A$ be a simple unital exact C*-algebra. If $W(A)$ is almost unperforated, then $A$ has strict comparison of positive elements.
\end{lem}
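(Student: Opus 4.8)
The plan is to use almost unperforation to reduce strict comparison to a comparison ``up to a multiple'', which in turn is extracted from the given inequality of dimension functions. So suppose $a,b\in\mathrm M_\infty(A)_+$ satisfy $d_\tau(a)<d_\tau(b)$ for every $\tau\in\mathrm T(A)$ (we may assume $\mathrm T(A)\neq\emptyset$, the purely infinite case being standard), and fix $\eps>0$; since $a\precsim b$ is equivalent to $(a-\eps)_+\precsim b$ for all $\eps>0$, it suffices to find $n\in\mathbb N$ with $(n+1)[(a-\eps)_+]\le n[b]$ in $W(A)$: almost unperforation then yields $[(a-\eps)_+]\le[b]$, i.e. $(a-\eps)_+\precsim b$, and letting $\eps\to0$ finishes.

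Producing that multiple inequality is the heart of the argument, and the device is the cut-down. The point is that $\tau\mapsto d_\tau((a-\eps)_+)$ is dominated by the \emph{continuous} function $\tau\mapsto\tau(g(a))$, where $g$ is any continuous function with $0\le g\le1$, $g\equiv1$ on $[\eps,\norm{a}]$ and $g\equiv0$ near $0$, while $\tau\mapsto d_\tau(b)$, being the pointwise supremum of the continuous functions $\tau\mapsto\tau(f_\delta(b))$, is lower semicontinuous; since $\tau(g(a))\le d_\tau(a)<d_\tau(b)$ everywhere, the lower semicontinuous function $\tau\mapsto d_\tau(b)-\tau(g(a))$ is strictly positive on the compact simplex $\mathrm T(A)$, hence bounded below by some $c_0>0$. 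As $d_\tau((a-\eps)_+)\le\tau(g(a))\le k$ for $a\in\mathrm M_k(A)$, choosing $n$ with $nc_0\ge k$ gives $(n+1)d_\tau((a-\eps)_+)\le n\,d_\tau(b)$ for every $\tau\in\mathrm T(A)$. It remains to convert this inequality of dimension functions into the inequality $(n+1)[(a-\eps)_+]\le n[b]$ in $W(A)$. For this one uses the Blackadar--Handelman identification of the states of $W(A)$ with the normalized lower semicontinuous dimension functions on $A$, together with Haagerup's theorem that every lower semicontinuous $2$-quasitrace on an exact C*-algebra is a trace -- \emph{this is the only place exactness of $A$ is used} -- so that all the relevant functionals are, up to the usual approximation, governed by the traces in $\mathrm T(A)$ for which the gap has just been established; a Hahn--Banach / Goodearl--Handelman type separation argument in the ordered abelian semigroup $W(A)$ then promotes the functional inequality to the semigroup inequality. (One typically also replaces $b$ by a cut-down $(b-\delta)_+\precsim b$ along the way, which is harmless and keeps the relevant functions continuous.)

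The step I expect to be the main obstacle is exactly this last conversion: a strict inequality of dimension functions carries no \emph{a priori} order information in $W(A)$, and turning it into one requires the full Blackadar--Handelman correspondence between dimension functions and quasitraces and Haagerup's exactness theorem, plus compactness of $\mathrm T(A)$ (which is why simplicity and unitality appear). The cut-down to $(a-\eps)_+$ is the technical mechanism that both manufactures the uniform gap $c_0$ and circumvents the failure of continuity of general dimension functions, and the identity $a\precsim b\iff(a-\eps)_+\precsim b\ \forall\eps$ is what makes that reduction cost nothing.
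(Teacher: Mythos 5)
The paper does not actually prove this lemma --- it is quoted as Corollary 4.6 of R{\o}rdam's paper \cite{Ror-Z-stable} --- so the comparison has to be with R{\o}rdam's argument, which your proposal is evidently reconstructing. Everything up to the uniform gap is fine: the cut-down $(a-\eps)_+$, the continuous majorant $\tau\mapsto\tau(g(a))$, lower semicontinuity of $\tau\mapsto d_\tau(b)$, and compactness of $\mathrm{T}(A)$ do produce a $c_0>0$ and hence $(n+1)\,d_\tau((a-\eps)_+)\le n\,d_\tau(b)$ for every $\tau\in\mathrm{T}(A)$.

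The gap is the step you yourself flag as the obstacle: the ``promotion'' of this functional inequality to $(n+1)[(a-\eps)_+]\le n[b]$ in $W(A)$. No separation argument can deliver that on its own: in a general ordered abelian semigroup, strict domination under every state does \emph{not} imply an order relation --- that failure is precisely what perforation means, and it is why the hypothesis of almost unperforation is there at all. What Goodearl--Handelman actually yields is an inequality in the Grothendieck group of a suitable subsemigroup, valid only up to an additive error term; absorbing that error back into $W(A)$ consumes both the a priori bound $[(a-\eps)_+]\le N[b]$ for some $N$ (which requires simplicity, i.e.\ fullness of $b$, and is nowhere established in your write-up --- this, rather than compactness of $\mathrm{T}(A)$, is where simplicity enters) and almost unperforation itself. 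So the architecture ``separate first, then apply almost unperforation once at the end'' cannot be executed as described; the two steps are intertwined, and the correct packaging is R{\o}rdam's Proposition 3.2: if $W$ is almost unperforated, $x\le Ny$ for some $N$, and $f(x)<f(y)$ for \emph{every} state $f$ on $(W,y)$, then $x\le y$. To invoke it one must verify the strict inequality for all states on $(W(A),[b])$, not only the trace-induced ones; that is where the cut-down really earns its keep, since for an arbitrary dimension function $d$ one has $d((a-\eps)_+)\le\underline{d}(a)$ and $\underline{d}(b)\le d(b)$, where $\underline{d}$ is the lower semicontinuous regularization, and $\underline{d}$ is induced by a $2$-quasitrace (Blackadar--Handelman), hence by a trace (Haagerup, the one use of exactness). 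You instead use the cut-down only to manufacture continuity over $\mathrm{T}(A)$, and once Proposition 3.2 is in hand the uniform gap $c_0$ is not even needed: $d((a-\eps)_+)<d(b)$ for all states already suffices.
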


The dimension function $d_\tau(\cdot)$ is lower semicontinuous, both with $\cdot$ and with $\tau$.
\begin{lem}\label{LSC-EL}
For any state $\tau$ on $A$, the map $a\to d_{\tau}(a)$ is lower semicontinuous, i.e., for any $\eps>0$, there exists $\delta>0$ such that for any $b$ with $\norm{a-b}\leq\delta$, one has $d_\tau(b)>d_\tau(a)-\eps$.
\end{lem}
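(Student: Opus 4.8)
The plan is to unwind the definition $d_\tau(a)=\sup_{\eta>0}\tau(f_\eta(a))$ and observe that, for each fixed $\eta>0$, the map $a\mapsto\tau(f_\eta(a))$ is norm-continuous in $a$, uniformly for $a$ ranging over a bounded set; lower semicontinuity of the supremum is then immediate. First I would fix $\eps>0$ and, using $d_\tau(a)=\sup_{\eta>0}\tau(f_\eta(a))$, choose $\eta>0$ with $\tau(f_\eta(a))>d_\tau(a)-\eps/2$. Put $M=\norm{a}+1$. It then suffices to produce $\delta\in(0,1)$ such that $\norm{a-b}\le\delta$ forces $\norm{f_\eta(a)-f_\eta(b)}<\eps/2$: indeed then $\mathrm{sp}(b)\subseteq[0,M]$, $\abs{\tau(f_\eta(a))-\tau(f_\eta(b))}\le\norm{f_\eta(a)-f_\eta(b)}<\eps/2$ since $\norm{\tau}\le 1$, and hence $d_\tau(b)\ge\tau(f_\eta(b))>d_\tau(a)-\eps$.

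For the norm estimate on $\norm{f_\eta(a)-f_\eta(b)}$ the point is that continuous functional calculus is uniformly continuous on norm-bounded sets of self-adjoint elements, which one sees by polynomial approximation. Concretely, by the Weierstrass theorem choose a polynomial $p$ with $p(0)=0$ and $\sup_{t\in[0,M]}\abs{f_\eta(t)-p(t)}<\eps/8$ (subtract the constant term from an initial approximant if necessary). Since $\norm{c^n-d^n}\le nM^{n-1}\norm{c-d}$ for self-adjoint $c,d$ of norm at most $M$, there is $L>0$ with $\norm{p(c)-p(d)}\le L\norm{c-d}$ for all such $c,d$; choose $\delta\in(0,1)$ with $L\delta<\eps/8$. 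Then for $\norm{a-b}\le\delta$ one has $\mathrm{sp}(a),\mathrm{sp}(b)\subseteq[0,M]$ and
\[
\norm{f_\eta(a)-f_\eta(b)}\le\norm{f_\eta(a)-p(a)}+\norm{p(a)-p(b)}+\norm{p(b)-f_\eta(b)}<\tfrac{\eps}{8}+\tfrac{\eps}{8}+\tfrac{\eps}{8}<\tfrac{\eps}{2},
\]
which is what was needed.

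There is no real obstacle here; the only point deserving a moment's care is that a Lipschitz function such as $f_\eta$ need not be operator Lipschitz, so one cannot directly bound $\norm{f_\eta(a)-f_\eta(b)}$ by $\eta^{-1}\norm{a-b}$, and this is precisely why the argument is routed through polynomial approximation, where the elementary estimate $\norm{c^n-d^n}\le nM^{n-1}\norm{c-d}$ is available. If $A$ is non-unital one simply runs the whole argument in the unitization $\widetilde A$, extending $\tau$ to a state on $\widetilde A$ and noting $f_\eta(a),p(a)\in A$ because $f_\eta(0)=p(0)=0$.
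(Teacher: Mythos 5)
Your proof is correct and follows essentially the same route as the paper: pick $\eta$ with $\tau(f_\eta(a))>d_\tau(a)-\eps/2$, use norm-continuity of $b\mapsto f_\eta(b)$ to get $\norm{f_\eta(a)-f_\eta(b)}<\eps/2$, and conclude from $d_\tau(b)\geq\tau(f_\eta(b))$. The only difference is that you supply the polynomial-approximation justification for the continuity of the functional calculus, which the paper simply asserts when it chooses $\delta$.
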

\begin{proof}
There exists $\eps_1$ such that $$\abs{d_\tau(a)-\tau(f_{\eps_1}(a))}<\eps/2.$$ Choose $\delta>0$ such that $$\norm{f_{\eps_1}(a)-f_{\eps_1}(b)}<\eps/2$$ whenever $\norm{a-b}\leq\delta$. One then has that $$d_\tau(b)=\sup_{\eps'>0}\tau(f_{\eps'}(b))\geq \tau(f_{\eps_1}(b))>\tau(f_{\eps_1}(a))-\eps/2>d_{\tau}(a)-\eps,$$ as desired.
\end{proof}

\begin{lem}\label{LSC-TR}
For any positive element $a$ in $A$, the map $\tau\to d_{\tau}(a)$ is lower semicontinuous, i.e., for any $\eps>0$, there exists a neighbourhood $U\ni\tau$ such that for any $\rho\in U$, one has $d_\rho(a)>d_\tau(a)-\eps$.
\end{lem}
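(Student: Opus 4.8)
The statement to prove is the lower semicontinuity of $\tau \mapsto d_\tau(a)$ for a fixed positive element $a$. The plan is to mimic the structure of the proof of Lemma \ref{LSC-EL}, exploiting that $d_\tau(a)$ is a supremum (over $\eps'>0$) of the functions $\tau \mapsto \tau(f_{\eps'}(a))$, each of which is \emph{continuous} in $\tau$ since $f_{\eps'}(a)$ is a fixed element of $A$ and $\tau$ ranges over states with the weak-$*$ topology. A pointwise supremum of continuous functions is automatically lower semicontinuous, so in principle the result is immediate; but to match the quantitative $\eps$--$\delta$ formulation in the statement I would spell it out.

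First I would fix $\tau$ and $\eps>0$. By the definition $d_\tau(a)=\sup_{\eps'>0}\tau(f_{\eps'}(a))$, choose $\eps_1>0$ such that
$$\tau(f_{\eps_1}(a)) > d_\tau(a) - \eps/2.$$
Now the element $b:=f_{\eps_1}(a)$ is a fixed positive element of $A$, so the map $\rho \mapsto \rho(b)$ is weak-$*$ continuous on the state space. Hence there is a weak-$*$ neighbourhood $U$ of $\tau$ such that $\rho(b) > \tau(b) - \eps/2$ for all $\rho \in U$. Then for every $\rho \in U$,
$$d_\rho(a) = \sup_{\eps'>0}\rho(f_{\eps'}(a)) \geq \rho(f_{\eps_1}(a)) > \tau(f_{\eps_1}(a)) - \eps/2 > d_\tau(a) - \eps,$$
which is exactly the claimed inequality.

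There is essentially no obstacle here — the only point requiring a word of care is that $f_{\eps'}(a)$ genuinely lies in $A$ (it does, being a continuous function vanishing at $0$ applied to $a$, even in the non-unital case) so that $\rho \mapsto \rho(f_{\eps'}(a))$ is a legitimate continuous affine functional, and that one is free to work with a single $\eps_1$ rather than the full supremum. Both are routine. If one wanted to phrase it without choosing a neighbourhood explicitly, one could simply invoke the general fact that a supremum of a family of weak-$*$ continuous functions is weak-$*$ lower semicontinuous, but the argument above is self-contained and parallels Lemma \ref{LSC-EL} closely.
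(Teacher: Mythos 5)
Your proof is correct and is essentially identical to the paper's: both fix $\eps_1$ with $\tau(f_{\eps_1}(a))>d_\tau(a)-\eps/2$ and then use weak-$*$ continuity of $\rho\mapsto\rho(f_{\eps_1}(a))$ on a suitable neighbourhood to conclude. Nothing to add.
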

\begin{proof}
There exists $\eps_1$ such that $$\abs{d_\tau(a)-\tau(f_{\eps_1}(a))}<\eps/2.$$ Choose a neighbourhood $U$ in the simplex of traces of $A$ such that $$\abs{\rho(f_{\eps_1}(a))-\tau(f_{\eps_1}(a))}<\eps/2,\quad\forall \rho\in U.$$ One then has $$d_\rho(a)=\sup_{\eps'>0}\rho(f_{\eps'}(a))\geq \rho(f_{\eps_1}(a))>\tau(f_{\eps_1}(a))-\eps/2>d_{\tau}(a)-\eps,$$ as desired.
\end{proof}

\begin{lem}\label{localize}
{Let $A$ be a C*-algebra, and let $\{C_i\}$ be a collection of sub-C*-algebra of $A$ such that for any finite subset $\F\subseteq A$ and any $\eps>0$, there is $C_i$ with $\F\subseteq_{\eps} C_i$.}

Then, for any positive elements $a$ and $b$ in $A$ with $a\precsim b$, and any $\eps>0$, there exist $C_i$ and positive elements $a_i, b_i\in C_i$ such that $$\norm{a-a_i}\leq\eps,\quad\norm{b-b_i}\leq\eps, \quad\textrm{and}\quad (a_i-\eps)_+\precsim b_i$$ in $C_i$, and $b_i\precsim b$.
\end{lem}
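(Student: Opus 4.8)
The plan is to carry into one of the subalgebras $C_i$ not only $a$ and $b$ but also a witness for $a\precsim b$, and then to conclude with the standard $\eps$-perturbation calculus for Cuntz subequivalence recorded among the numbered facts (1)--(5) of Section~2. Fix $\eps>0$ and set $\eps_0:=\eps/8$. By fact~(3) there is $x\in A$ with $x^*bx=(a-\eps_0)_+$. Let $M:=1+\max\{\norm a,\norm b,\norm x\}$, and let $\delta>0$ be a small parameter, to be fixed at the end, with $\delta\le\eps/4$. Applying the hypothesis to the finite set $\F=\{a,b,x\}$ and the tolerance $\delta$ produces a sub-C*-algebra $C_i\subseteq A$ and elements $a_i',b_i',x_i\in C_i$ with $\norm{a-a_i'},\norm{b-b_i'},\norm{x-x_i}<\delta$.

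A routine functional-calculus perturbation makes the approximants of $a$ and $b$ positive: replace $a_i'$ by $\bigl(\tfrac12(a_i'+(a_i')^*)\bigr)_+\in C_i$, and similarly $b_i'$. Since $a,b\ge0$, this worsens the corresponding norm estimates by at most a further $\delta$, so after relabelling $\delta$ we may assume $a_i:=a_i'$ and $b_i^0:=b_i'$ are positive elements of $C_i$ with $\norm{a-a_i}<\delta$ and $\norm{b-b_i^0}<\delta$.

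Now cut down the approximant of $b$ so that it lies below $b$ in the Cuntz order: set $b_i:=(b_i^0-\delta)_+\in C_i$. Since $\norm{b_i^0-b}<\delta$, fact~(4) gives $b_i\precsim b$, while $\norm{b-b_i}\le\norm{b-b_i^0}+\norm{b_i^0-b_i}<2\delta\le\eps$. It remains to check $(a_i-\eps)_+\precsim b_i$ inside $C_i$. Working in $C_i$,
\[
\norm{x_i^*b_ix_i-(a_i-\eps_0)_+}\ \le\ \norm{x_i^*b_ix_i-x^*bx}+\norm{(a-\eps_0)_+-(a_i-\eps_0)_+}.
\]
The first summand is at most a constant depending only on $M$ times $\delta$, by the submultiplicative bound $\norm{y^*cz}\le\norm y\,\norm c\,\norm z$, the triangle inequality, and $\norm{x_i-x}<\delta$, $\norm{b_i-b}<2\delta$; the second is $\le\norm{a-a_i}<\delta$ because $t\mapsto(t-\eps_0)_+$ is $1$-Lipschitz. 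Denote by $\rho(\delta)$ a quantity exceeding this sum, so $\rho(\delta)\to0$ as $\delta\to0$. By fact~(4) applied in $C_i$, by the identity $\bigl((a_i-\eps_0)_+-\rho(\delta)\bigr)_+=(a_i-\eps_0-\rho(\delta))_+$, and by fact~(1) for the last step,
\[
(a_i-\eps_0-\rho(\delta))_+\ \precsim\ x_i^*b_ix_i\ \precsim\ b_i\quad\text{in }C_i.
\]
Choosing $\delta$ small enough that $\eps_0+\rho(\delta)\le\eps$ and using $(c-\eps)_+\precsim(c-s)_+$ for $0\le s\le\eps$, we obtain $(a_i-\eps)_+\precsim(a_i-(\eps_0+\rho(\delta)))_+\precsim b_i$ in $C_i$, which is the assertion.

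The only step that needs real care is the clause $b_i\precsim b$: one cannot simply take $b_i$ to be a norm-approximant of $b$, since closeness does not entail Cuntz subequivalence, and the remedy is precisely the cut-down $b_i=(b_i^0-\delta)_+$; this in turn forces the comparison estimate above to be run with $b_i$ rather than with $b_i^0$, which is why the bound $\norm{b_i-b}<2\delta$ (not $\delta$) appears. An alternative to the ad hoc cut-down is to invoke Lemma~\ref{Rordam1}: choose $b_i^0\in C_i$ within $\eps/16$ of $(b-\eps/2)_+$ and set $b_i:=(b_i^0-\eps/4)_+\in C_i$, which directly yields $(b-\eps)_+\precsim b_i\precsim(b-\eps/2)_+\precsim b$ together with $\norm{b-b_i}<\eps$; the remainder of the argument is unchanged. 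Everything else is routine bookkeeping with facts (1)--(5).
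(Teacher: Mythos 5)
Your argument is correct and is essentially the paper's proof: both transport a witness for $a\precsim b$ into the subalgebra $C_i$ along with approximants of $a$ and $b$, arrange $b_i\precsim b$ by a cut-down (the paper via Lemma \ref{Rordam1}, you via $(b_i^0-\delta)_+$ and fact (4), noting the Lemma \ref{Rordam1} alternative yourself), and close with facts (1) and (4); the only substantive difference is that the paper carries the approximating sequence $v_n$ with $v_n^*bv_n\to a$, whereas you carry the single exact witness $x^*bx=(a-\eps_0)_+$ supplied by fact (3). One small correction to your variant: $t\mapsto(t-\eps_0)_+$ is \emph{not} operator Lipschitz, so the bound $\norm{(a-\eps_0)_+-(a_i-\eps_0)_+}\le\norm{a-a_i}$ is unjustified as stated; but since all you need is that $\rho(\delta)\to0$ with $\rho$ determined by $M$ and $\delta$ alone, the uniform continuity of the functional calculus on the ball of radius $M+1$ (approximate $(t-\eps_0)_+$ uniformly on $[0,M+1]$ by polynomials vanishing at $0$) does the job and the rest of the argument is unaffected.
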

\begin{proof}
Since $a\precsim b$, there is a sequence $\{v_i\}$ in $A$ such that $$\lim_{i\to\infty}v^*_ibv_i=a.$$ For each $i$, choose a sub-C*-algebra $C_i$ with $u_i, a_i, b_i$ in $C_i$ such that $$\norm{v_i^*bv_i-u^*_ib_iu_i}\leq 1/2^i\quad\textrm{and}\quad\norm{a_i-a}\leq 1/2^i.$$ Moreover, by Lemma \ref{Rordam1}, $b_i$ can be chosen so that $b_i\precsim b$. Hence one has $$\lim_{i\to\infty} u_i^*b_iu_i=a$$ and $$\lim_{i\to\infty} a_i=a\quad \textrm{and}\quad \lim_{i\to\infty} b_i=b.$$ Moreover, $b_i\precsim b.$

Choose $C_i$ large enough such that $\norm{a-a_i}<\eps/4$ and $\norm{b-b_i}<\eps/4$, and $$\norm{u_i^*b_iu_i-a}<\eps/4.$$ Hence, one has $$\norm{u_i^*b_iu_i-a_i}<\eps/2,$$ and therefore $$(a_i-\eps)_+\precsim b_i$$ in $C_i$, and $b_i\precsim b$, as desired.
\end{proof}

{Recall that
\begin{defn}[\cite{RC-Toms}]
A C*-algebra $A$ has radius of comparison less than $r$, write $\mathrm{rc}(A)<r$, if for any positive elements $a, b\in A$ with $$d_\tau(a)+r< d_\tau(b),\quad\forall \tau\in\mathrm{T}(A),$$ one has that $a\precsim b$. The radius of comparison of $A$ is $\mathrm{rc}(A):=\inf\{r;\ \mathrm{rc}(A)<r\}$.
\end{defn}
}

\begin{lem}\label{LOA-RC}
Let $A$ be a simple C*-algebra satisfying the following property: For any finite subset $\F\subseteq A$ and any $\eps_1, \eps_2>0$, there is a sub-C*-algebra $C$ such that $\F\subseteq_{\eps_1} C$ and $\mathrm{rc}(C)<\eps_2$, then $W(A)$ is almost unperforated. {In particular, if $A$ is also exact, it} has the strict comparison of positive elements.
\end{lem}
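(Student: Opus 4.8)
The plan is to deduce almost unperforation of $W(A)$ directly from the local approximation hypothesis, using the comparison-radius version of strict comparison available on each approximating subalgebra $C$, together with the lower semicontinuity of dimension functions established in Lemmas \ref{LSC-EL} and \ref{LSC-TR}. So suppose $a,b$ are positive elements of $A$ (we may assume $a,b\in\mathrm{M}_\infty(A)$, or simply reduce to $A$ itself after a standard matrix-amplification argument) with $(n+1)[a]\le n[b]$ in $W(A)$ for some $n\in\mathbb N$. Since $A$ is simple and the element $(n+1)a\oplus 0$ is dominated by $nb$, one first upgrades this to a strict inequality of dimension functions: a routine argument (using $(a-\eps)_+$ and that $d_\tau$ is additive over orthogonal sums) gives, for each small $\eps>0$, a constant so that $(n+1)\,d_\tau\big((a-\eps)_+\big)\le n\,d_\tau(b)$ for all $\tau\in\mathrm{T}(A)$, hence $d_\tau\big((a-\eps)_+\big)\le \tfrac{n}{n+1}\,d_\tau(b)$, and therefore there is a uniform $r>0$ with $d_\tau\big((a-\eps)_+\big)+r<d_\tau(b)$ for all traces $\tau$ once $b$ is suitably normalized (here one uses simplicity to bound $d_\tau(b)$ away from $0$ uniformly, so that the multiplicative gap becomes an additive gap of size at least some fixed $r$).

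Next I would fix $\eps>0$ small and apply the hypothesis to the finite set $\F=\{a,b\}$ with tolerances $\eps_1=\eps$ and $\eps_2=r/2$, obtaining a sub-C*-algebra $C\subseteq A$ with $\mathrm{rc}(C)<r/2$ and positive elements $a',b'\in C$ with $\norm{a-a'}<\eps$, $\norm{b-b'}<\eps$. By Lemma \ref{LSC-TR} together with a compactness argument on $\mathrm{T}(A)$, and by restricting traces of $C$ back up to $A$ (every tracial state of $C$ extends to one of $A$, so the relevant inequality transfers), one arranges that for all $\sigma\in\mathrm{T}(C)$ one still has $d_\sigma\big((a'-\eps')_+\big)+r/2<d_\sigma(b')$ for an appropriate $\eps'>0$; this uses Lemma \ref{LSC-EL} to control the perturbation $a\rightsquigarrow a'$ inside the dimension function. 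Then $\mathrm{rc}(C)<r/2$ yields $(a'-\eps')_+\precsim b'$ in $C$, hence in $A$. Finally, letting $\eps,\eps'\to 0$ and using that $a\precsim b$ iff $(a-\delta)_+\precsim b$ for all $\delta>0$ (fact (2) in Section 2), one concludes $a\precsim b$, i.e. $[a]\le[b]$. Thus $W(A)$ is almost unperforated, and since $A$ is simple, unital and (being an inductive limit of type I C*-algebras) exact, Corollary 4.6 of \cite{Ror-Z-stable} gives strict comparison of positive elements.

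The main obstacle I anticipate is the bookkeeping in the passage from $A$ to $C$: one must ensure that the uniform additive gap $r$ survives (a) the perturbation of $a$ and $b$ to elements of $C$, and (b) the restriction of the trace space, i.e. that $\sup_{\sigma\in\mathrm T(C)}$ of the relevant quantities is still controlled by $\sup_{\tau\in\mathrm T(A)}$. Part (b) is clean because traces on $C$ extend to $A$, but part (a) requires the lower-semicontinuity Lemmas \ref{LSC-EL}, \ref{LSC-TR} to be applied uniformly in $\tau$, which is where the compactness of $\mathrm{T}(A)$ is essential; one should be a little careful that the $\eps'$ coming out of Lemma \ref{LSC-EL} can be chosen independently of $\sigma$. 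The other mildly delicate point is the very first step—converting the semigroup inequality $(n+1)[a]\le n[b]$ into the additive comparison-radius inequality—but this is a by-now standard manipulation with cutdowns $(\cdot-\eps)_+$ and the additivity and lower semicontinuity of $d_\tau$, and simplicity of $A$ supplies the uniform lower bound on $d_\tau(b)$ needed to turn a multiplicative gap into an additive one.
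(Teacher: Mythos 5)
Your overall strategy (localize to an approximating subalgebra $C$ with small radius of comparison, verify a dimension-function gap over $\mathrm{T}(C)$, conclude $(a'-\eps')_+\precsim b'$ there, and let the cut-down parameter tend to zero) matches the paper's, but the step where you move the trace inequality from $\mathrm{T}(A)$ to $\mathrm{T}(C)$ contains a genuine gap. You assert that the inequality $d_\tau\big((a-\eps)_+\big)+r<d_\tau(b)$, valid for all $\tau\in\mathrm{T}(A)$, transfers to all $\sigma\in\mathrm{T}(C)$ ``because every tracial state of $C$ extends to one of $A$.'' This is false: a tracial state on a subalgebra extends to a \emph{state} on $A$, but in general not to a \emph{tracial} state (already $C=\Comp\oplus\Comp$ embedded diagonally in $\Mat{2}$ has traces with no tracial extension). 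Since $\mathrm{rc}(C)<\eps_2$ only buys you comparison when the gap holds for \emph{every} trace of $C$, and $\mathrm{T}(C)$ is not controlled by restrictions of elements of $\mathrm{T}(A)$, your hypothesis over $\mathrm{T}(A)$ gives you nothing over $\mathrm{T}(C)$. Nor can this be repaired by the compactness/accumulation argument you sketch, because $d$ is only \emph{lower} semicontinuous (Lemmas \ref{LSC-EL} and \ref{LSC-TR}); ruling out traces $\sigma$ on $C$ with $d_\sigma((a'-\eps')_+)+r/2\geq d_\sigma(b')$ would require controlling $\limsup d_{\sigma_i}(a_i)$ from above, i.e.\ upper semicontinuity, which fails.

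The paper circumvents this by never converting $(n+1)[a]\le n[b]$ into a trace inequality over $\mathrm{T}(A)$ at all. Instead, Lemma \ref{localize} pushes the Cuntz relation itself into the subalgebra: one obtains $a_i,b_i\in C_i$ with $\bigoplus_{n+1}(a_i-\eps)_+\precsim\bigoplus_n b_i$ \emph{inside} $C_i$ and $b_i\precsim b$ in $A$. Applying an arbitrary trace $\sigma$ of $C_i$ to this relation immediately yields $d_\sigma((a_i-\eps)_+)\le\frac{n}{n+1}d_\sigma(b_i)$ for every $\sigma\in\mathrm{T}(C_i)$, with no reference to $\mathrm{T}(A)$. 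The only trace transfer actually needed is the uniform lower bound $d_\sigma(b_i)>c$, and that is a \emph{lower} bound, so lower semicontinuity together with a sequential accumulation argument (in which a limit of traces on subalgebras $C_i$ with dense union is genuinely a trace on $A$) does the job; this bound converts the multiplicative gap into the additive gap $d_\sigma((a_i-\eps)_+)+\frac{c}{n+1}\le d_\sigma(b_i)$, and $\mathrm{rc}(C_i)<c/(n+1)$ finishes. You should restructure your argument along these lines; your final limiting step $(a-4\eps)_+\precsim(a_i-\eps)_+\precsim b_i\precsim b$ with $\eps\to0$ is fine as it stands.
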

\begin{proof}
Assume that one has positive elements $a$ and $b$ in $A$ with  $(n+1)[a]\precsim n[b]$ for some $n$.

Fix $\eps>0$ for the time being. It follows from Lemma \ref{localize} that there exist $C_i$ and $a_i, b_i\in C_i$ such that $$\norm{a_i-a}\leq\eps,\quad \norm{b_i-b}\leq\eps\quad \textrm{and}\quad b_i\precsim b, $$ and 
\begin{equation}\label{eq310}
\bigoplus_{n+1}(a_i-\eps)_+\precsim \bigoplus_nb_i
\end{equation} 
in $C_i$. Moreover, using the simplicity of $A$, the compactness of the simplex of tracial states of $A$, and the lower-semicontinuity of $d_\tau$, one may assume that there is a strictly positive number $c$ which is independent of $C_i$ such that $$d_\tau(b_i)>c$$ for any tracial state $\tau$ on $C_i$. 

Indeed, for any $\eps'>0$, consider $f_{\eps'}(b_i)$. It is clear that $f_{\eps'}(b_i)\to f_{\eps'}(b)$. Since $A$ is simple, there exists $c>0$ such that $$\tau(f_{\eps'}(b))>c$$ for any tracial state $\tau$ on $A$.  Then there exists a sub-C*-algebra $C_i$ such that 
\begin{equation}\label{3031}
\tau(f_{\eps'}(b_i))>c
\end{equation} for all tracial state $\tau$ on $C_i$. If this were not true, there is a sequence of $C_i$ with dense union in $A$, positive elements $b_i\in C_i$ and a sequence of tracial state $\tau_i$ on $C_i$ such that $$\norm{f_{\eps'}(b_i)-f_{\eps'}(b)}\leq 1/2^i\quad\textrm{and}\quad{\tau_i}(f_{\eps'}(b_i))\leq c.$$ Extend $\tau_i$ to a state of $A$, and pick an accumulation point $\tau_\infty$, and assume that $\tau_i\to\tau_\infty$. One then has $$\tau_i(f_{\eps'}(b))\leq c+1/2^i,$$ and  $$\tau_\infty(f_{\eps'}(b))=\lim_{i\to\infty} \tau_i(f_{\eps'}(b))\leq c,$$ which is a contradiction. This proves \eqref{3031}. Thus, $$d_{\tau}(b_i)=\sup_{\eps'>0}f_{\eps'}(b_i)\geq \tau(f_{\eps'}(b_i))>c.$$ Note that $c$ is independent of $C_i$.

Therefore, one may assume that $C_i$ is large enough such that $\mathrm{rc}(C_i)<c/(n+1)$. It follows from \eqref{eq310} that $$d_\tau((a_i-\eps)_+)+\frac{1}{n+1}d_\tau(b_i)\leq d_\tau(b_i)$$ for any tracial state $\tau$ on $C_i$. Since $d_\tau(b_i)>c>0$ for all $\tau$, one has $$d_\tau((a_i-\eps)_+)+\frac{c}{n+1}\leq d_\tau(b_i)$$ for any tracial state $\tau$ on $C_i$.

Since $\mathrm{rc}(C_i)<c/(n+1)$, one has that $$(a_i-\eps)_+\precsim b_i.$$ Note that $\norm{(a-\eps)_+-(a_i-\eps)_+}\leq3\eps$, one has that $(a-4\eps)_+\precsim(a_i-\eps)_+$, and hence $$(a-4\eps)_+\precsim(a_i-\eps)_+\precsim b_i\precsim b.$$ Since $\eps$ is arbitrary, one has that $a\precsim b$, as desired.
\end{proof}

\begin{thm}\label{MD0-Comp}
Let $A$ be a simple unital AH-algebra with diagonal maps. If $A$ has mean dimension zero, then it has strict comparison of positive elements.
\end{thm}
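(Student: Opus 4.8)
\emph{Overall strategy.} The plan is to obtain strict comparison by feeding the local approximation of Theorem~\ref{LOA} into Lemma~\ref{LOA-RC}. Concretely, Lemma~\ref{LOA-RC} reduces the theorem to the assertion that for every finite subset $\F\subseteq A$ and all $\eps_1,\eps_2>0$ there is a sub-C*-algebra $C\subseteq A$ with $\F\subseteq_{\eps_1} C$ and $\mathrm{rc}(C)<\eps_2$; once this is shown, $W(A)$ is almost unperforated, and since $A$ is simple, unital and nuclear (hence exact) it has strict comparison of positive elements. Now, since $\gamma(A)=0$, Theorem~\ref{LOA} supplies, for any $\F$ and any $\eps_1,\eps_2'>0$, a unital sub-C*-algebra $C\cong\bigoplus_i p_i\MC{n_i}{\Omega_i}p_i\subseteq A$ with $\F\subseteq_{\eps_1} C$ and $\mathrm{dim}(\Omega_i)/\mathrm{rank}(p_i)<\eps_2'$ for every $i$. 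So everything reduces to a purely local fact about such building blocks: there is a universal constant $K$ so that
$$\mathrm{rc}\Big(\bigoplus_i p_i\MC{n_i}{\Omega_i}p_i\Big)\;\le\;K\cdot\max_i\frac{\mathrm{dim}(\Omega_i)}{\mathrm{rank}(p_i)}.$$
Choosing $\eps_2'=\eps_2/K$ then gives $\mathrm{rc}(C)<\eps_2$, as needed; the precise value of $K$ is immaterial here, since $\eps_2'$ in Theorem~\ref{LOA} is arbitrary.

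\emph{The homogeneous estimate.} It suffices to treat a single summand $C=p\MC{n}{X}p$ with $X$ connected, $\mathrm{rank}(p)=m$ and $\mathrm{dim}(X)=d$; set $r_0=d/m$. Let $a,b\in C_+$ satisfy $d_\tau(a)+r_0<d_\tau(b)$ for all $\tau\in\mathrm{T}(C)$. Specializing to the point traces $\tau_x$ and using $d_{\tau_x}(c)=\mathrm{rank}(c(x))/m$ for $c\in C_+$, we obtain the pointwise inequality $\mathrm{rank}(a(x))+d<\mathrm{rank}(b(x))$ for all $x\in X$. I would then invoke the comparison theory for positive elements of matrix algebras over a finite-dimensional compact space, in the same circle of ideas used for projections in the proof of Theorem~\ref{K0-comp}: a pointwise rank bound $\mathrm{rank}((a-\eps)_+(x))+\mathrm{dim}(X)\le\mathrm{rank}(b(x))$ for all $x$ forces $(a-\eps)_+\precsim b$, by cutting $a$ down to a spectral piece that behaves like a projection and embedding the associated vector bundle into the one attached to $b$ via Theorem~(9)1.2 and Theorem~(9)1.5 of \cite{Husemoller-Fibre-Bundles}. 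Since $a\precsim b$ whenever $(a-\eps)_+\precsim b$ for all $\eps>0$, this yields $a\precsim b$, hence $\mathrm{rc}(C)\le r_0$, i.e.\ $K=1$ already works for one summand, and a finite direct sum is handled summand by summand.

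\emph{Where the work is.} The reduction above is essentially formal; the one genuine point is the homogeneous estimate, and inside it the passage from \emph{projections} (where Husemoller's bundle theory applies directly, as in Theorem~\ref{K0-comp}) to \emph{arbitrary positive elements}. Here one has to localize: on a sufficiently fine open cover of $X$ the element $a$ is, up to small perturbation, a direct sum of honest projections with continuous scalar coefficients, and one must track how the lower-semicontinuous functions $x\mapsto\mathrm{rank}(a(x))$ and the cut-downs $(a-\eps)_+$ interact with the bundle-embedding argument, checking that the dimension penalty incurred is no worse than $\mathrm{dim}(X)$ --- indeed one expects it to be closer to $\mathrm{dim}(X)/2$, which is what should underlie the sharper radius-of-comparison bound of Section~\ref{comp-radius}. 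Granting this homogeneous comparison lemma, Theorem~\ref{MD0-Comp} follows at once from Theorem~\ref{LOA} and Lemma~\ref{LOA-RC}.
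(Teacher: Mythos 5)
Your proposal follows exactly the paper's route: the paper's proof of Theorem \ref{MD0-Comp} is a one-line combination of Theorem \ref{LOA}, Lemma \ref{LOA-RC}, and Theorem 3.15 of \cite{Toms-PLMS}, the last of which is precisely the homogeneous radius-of-comparison estimate you isolate as ``where the work is'' (with the sharp constant involving $\mathrm{dim}(X)/2$ rather than $\mathrm{dim}(X)$, as you anticipate). The only difference is that you sketch a derivation of that estimate from Husemoller's bundle theory instead of citing it, and since the precise constant is immaterial to the $\eps_2$-argument, this is essentially the same proof.
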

\begin{proof}
It follows directly from Theorem \ref{LOA}, Lemma \ref{LOA-RC}, and Theorem 3.15 of \cite{Toms-PLMS}.
\end{proof}

The next theorem is an analog to Theorem 3.4 of \cite{Toms-SDG} for local approximations, and the proof is similar.
\begin{lem}\label{LOA-AD}
Let $A$ be a simple separable unital C*-algebra satisfying the following property: For any finite subset $\F\subseteq A$ and any $\eps>0$, there is a unital sub-C*-algebra $C$ such that $\F\subseteq_{\eps} C$ and $C$ is a recursive sub-homogeneous C*-algebra with dimension ratio less than $\eps$. Then, for any strictly positive continuous affine function $f$ on $\mathrm{T}(A)$ and any $\eps>0$, there exist positive elements $a\in \mathrm{M}_{n}(A)$ for some $n$ such that $$\abs{f(\tau)-d_\tau(a)}<\eps\quad\forall \tau\in\mathrm{T}(A).$$
\end{lem}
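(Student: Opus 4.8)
The plan is to reduce the statement about $A$ to a statement about the approximating recursive sub-homogeneous (RSH) algebras $C$, where affine functions on the trace simplex can be realized by dimension functions of explicitly constructed positive elements (essentially rank functions of vector bundles / honest matrix-valued functions on the base pieces), and then to transport the approximation back to $A$ via the local approximation hypothesis together with the semicontinuity lemmas \ref{LSC-EL} and \ref{LSC-TR}. Concretely, given a strictly positive continuous affine $f$ on $\mathrm{T}(A)$ and $\eps>0$, choose $\delta>0$ with $\delta<\eps/4$ and $\inf_\tau f(\tau)>2\delta$; pick a finite $\F\subseteq A$ (to be specified below, large enough to ``pin down'' traces up to $\delta$) and apply the hypothesis to obtain a unital RSH subalgebra $C\subseteq A$ with $\F\subseteq_\delta C$ and dimension ratio less than some small $\eta$ to be chosen.

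The heart of the argument is the construction of $a$ inside $C$. On $C\cong$ an RSH algebra built from pieces $p_s\MC{n_s}{\Omega_s}p_s$, a positive element $a$ with $a(x)$ a projection of rank $k_s(x)$ at $x\in\Omega_s$ has $d_{\tau_x}(a)=k_s(x)/n_s$ (after normalizing by the unit), and a general positive element lets one interpolate between consecutive integer values: choosing $a(x)$ to have eigenvalues $1$ with multiplicity $k$ and one eigenvalue in $(0,1)$ gives $d_{\tau_x}(a)\in(k/n_s,(k+1)/n_s]$. Thus, after passing to a matrix amplification $\mathrm{M}_N(C)$ to make the ``grid'' $1/(Nn_s)$ fine, one can build $a\in\mathrm{M}_N(C)$ whose rank function approximates $N\cdot f$ restricted to $\mathrm{T}(C)$ within $\delta$; here one uses that $f$, viewed via the restriction map $\mathrm{T}(C)\to\mathrm{T}(A)$, is continuous affine on $\mathrm{T}(C)$, so it is determined by its values on the finitely many base pieces and can be uniformly approximated by a locally constant (or continuous piecewise-affine) rank pattern whose jumps are controlled. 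The point where the dimension ratio enters is exactly that any prescribed continuous rank function on $\Omega_s$ with values in $\{0,1,\dots,Nn_s\}$ that varies by at most $1$ on small sets can be realized by an actual self-adjoint element, with no obstruction, since we only need a positive element (not a projection) — so the low-dimensionality of $\Omega_s$ is not strictly needed for existence, but it is what guarantees $\mathrm{T}(C)$ is ``governed'' by the base pieces cleanly and that $d_{\tau_x}$ computes as claimed on the amplification. This yields $a\in\mathrm{M}_N(A)$ (for $n=N$) with $|f(\tau)-d_\tau(a)|<2\delta$ for all $\tau\in\mathrm{T}(C)$, read as traces on $A$ after extending.

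The remaining step is the transfer from $\mathrm{T}(C)$-traces to all of $\mathrm{T}(A)$. Every tracial state on $A$ restricts to one on $C$, and since $\F\subseteq_\delta C$ with $\F$ chosen to generate (approximately) enough of $A$, any $\tau\in\mathrm{T}(A)$ agrees on $\F$ with (the extension of) its restriction to $C$; combined with the lower semicontinuity of $\tau\mapsto d_\tau(a)$ (Lemma \ref{LSC-TR}) and of $a\mapsto d_\tau(a)$ (Lemma \ref{LSC-EL}), together with the fact that $d_\tau(a)\le \mathrm{Tr}$-type upper bounds controlled by $\|a\|$ and the rank, one gets $|f(\tau)-d_\tau(a)|<\eps$ for all $\tau\in\mathrm{T}(A)$. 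I would model the bookkeeping here on the proof of Theorem 3.4 of \cite{Toms-SDG}, as the paper signals. The main obstacle I anticipate is not the existence of $a$ on a single RSH piece but controlling the discrepancy uniformly over $\mathrm{T}(A)$: one must ensure that the finite set $\F$ and the tolerance $\delta$ are chosen so that the trace on $C$ approximating a given $\tau\in\mathrm{T}(A)$ does so compatibly with the already-built $a$, i.e. that $a$ does not have to be rechosen for different traces. This is handled by fixing $a$ first (from the geometry of $C$) and only then invoking semicontinuity and the density of $\bigcup C$'s to conclude — exactly the order of steps above — so that $\eps$, $\delta$, $\F$, $\eta$ and $N$ are quantified in the order $\eps \to \delta \to (\F,\eta) \to C \to N \to a$.
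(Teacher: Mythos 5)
Your overall strategy---realize an approximation of $f$ by the rank function of a positive element inside one approximating subalgebra $C$, then transfer back to $A$---is the same as the paper's, but two of your key steps have genuine gaps.

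First, you treat $C$ as if it were a direct sum of homogeneous pieces $p_s\MC{n_s}{\Omega_s}p_s$ and build $a$ piece by piece. But $C$ is a \emph{recursive} subhomogeneous algebra: an element must agree with a prescribed pullback on the clutching subspace of each stage. The real work in the paper's proof is an induction on the length of the decomposition: given $a_K$ on the first $K$ stages with $g_k\leq\mathrm{rank}(\pi_k(a_K))\leq h_k$, one checks that its pullback $b$ to the clutching subspace $Y_{K+1}$ still satisfies the rank bounds (a computation using $n_y\tau_y=\sum_j n_{y_j}\tau_{y_j}$ and the ceiling/floor corrections built into $h$ and $g$), and then extends $b$ over $X_{K+1}$ keeping those bounds via Proposition 2.9 of \cite{Toms-SDG}. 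That extension result is exactly where the dimension ratio is consumed: it needs the gap $h_k(x)-g_k(x)=2+\eps n_x/2$ to dominate $4d_x$, i.e.\ $d_x/n_x\lesssim\eps$. So your parenthetical claim that ``the low-dimensionality of $\Omega_s$ is not strictly needed for existence'' is wrong: it is precisely the hypothesis that makes a positive element with prescribed (discontinuous, integer-valued) rank bounds exist on a glued base space, and without it the achievable error in $d_\tau(a)$ is of the order of the dimension ratio.

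Second, your transfer step runs in the wrong direction. You propose to view $f$ ``via the restriction map $\mathrm{T}(C)\to\mathrm{T}(A)$,'' but the restriction map goes the other way, $\mathrm{T}(A)\to\mathrm{T}(C)$, and a trace on $C$ has no canonical extension to $A$; so $f$ cannot be pulled back to $\mathrm{T}(C)$, and you have no target function to realize on the base spaces of $C$. The paper instead approximates $f$ within $\eps/2$ in $\mathrm{Aff}(\mathrm{T}(A))$ by $\iota'_i(f_i)$ for some $f_i\in\mathrm{Aff}(\mathrm{T}(C_i))$, using that $\bigcup_i\iota'_i(\mathrm{Aff}(\mathrm{T}(C_i)))$ is dense in $\mathrm{Aff}(\mathrm{T}(A))$ when the $C_i$ locally approximate $A$; it is $f_i$, a genuine function on $\mathrm{T}(C_i)$ and hence on the base spaces via $x\mapsto f_i(\tau_x)$, that is realized by the rank of $a$. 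Once $\abs{d_\sigma(a)-f_i(\sigma)}<\eps/2$ holds for all $\sigma\in\mathrm{T}(C_i)$, the passage to all of $\mathrm{T}(A)$ is automatic, since $d_\tau(a)=d_{\tau\circ\iota_i}(a)$ and $\iota'_i(f_i)(\tau)=f_i(\tau\circ\iota_i)$; it requires neither Lemma \ref{LSC-EL} nor Lemma \ref{LSC-TR}, which you invoke at this point without a clear role.
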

\begin{proof}
Since $f$ is strictly positive, there exists $\delta>0$ such that $f(\tau)>\delta$ for all $\tau\in\mathrm{T(A)}$. Without loss of generality, one may assume that $\eps<\delta/8$. 

Let $(\F_i)$ be an increasing sequence of finite subsets of $A$ with dense union, and let $(C_i)$ be the sequence of corresponding sub-C*-algebras with $\F_i$ approximates within $\eps/i$ of $C_i$. Since $A$ is simple, one may assume that the dimension of the irreducible representation of $C_i$ is at least $i^2$ and the dimension ratio of $C_i$ is less that $\eps/i$.

Denot by $\iota_i$ the embedding of $C_i$, and  denoted by $\iota'_i: \mathrm{Aff}(\mathrm{T}(C_i))\to \mathrm{Aff}(\mathrm{T}(A))$ the map induced by $\iota_i$. Since $A$ can be locally approximated by $\{C_i\}$, it is well known that the set ${\bigcup_i\iota'_i(\mathrm{Aff}(\mathrm{T}(C_i)))}$ is dense in $\mathrm{Aff}(\mathrm{T}(A))$. Therefore, for sufficiently large $k$, the set $\iota'_k(\mathrm{Aff}(\mathrm{T}(C_k)))$ approximately contains $f$ up to $\eps/2$, and thus, one may assume that $\iota'_i(\mathrm{Aff}(\mathrm{T}(C_i)))$ approximately contains $f$ up to $\eps/2$ for all $i$.

For each $i$, pick $f_i\in \mathrm{Aff}(\mathrm{T}(C_i))$ such that $\norm{\iota'_i(f_i)-f}<\eps/2$ . Since $f$ is strictly positive, by an asymptotic argument, one may assume further that the sub-C*-algebras $\{C_i\}$ and $\{f_i\}$ are chosen in a way so that $f_i(\tau)>\delta/2$ for all $k$ and for all $\tau\in\mathrm{T}(C_i).$ 

Fix $C_i$ with $i>\max\{8+2/\eps, 8/\delta\}$. Denote by $\{X_1, ..., X_l\}$ the base spaces of $C_i$, and denote by $X=\coprod_{k=1}^l X_k$. Note that each $x\in X$ induces a tracial state of $C_i$, and denote it by $\tau_x$.

For each $1\leq k\leq l$, consider the functions $h_k(x)=\lceil n_xf_i(\tau_x)\rceil-2$ and $g_k(x)=\lfloor n_xf_i(\tau_x)\rfloor-\eps n_x/2-3$, where $x\in X_k$, and $n_x$ is the matrix size of $C_i$ at $x$, and $d_x$ is the dimension of the connected component of the base space $X$ containing $x$. Then the functions $h_k$ and $g_k$ are lower semicontinuous and upper semicontinuous respectively. Note that 
$$h_k(x)-g_k(x)=2+\frac{\eps n_x}{2}\geq 4d_x.$$ 

Assert that there exists $a\in C_i$ such that $$g_k(x)\leq\mathrm{rank}(\pi_k(a)(x))\leq h_k(x),$$ where $\pi_k$ is the restriction of to $X_k$.
 
If $l=0$, since $f_i(\tau_x)>\delta/2$, $d_x/n_x<\eps/2$, $n_x>i^2$, and $\eps<\delta/8$, one has that $$g(x)>n_x\delta/2-4d_x-2>i^2\delta/4-2.$$ Thus the function $g(x)$ is positive.  
Hence, by Proposition 2.9 of \cite{Toms-SDG} (with $Y=\textrm{\O}$), there exists a positive element $a\in \mathrm{M}_n(C_i)$ for some $n$ such that $$g(x)\leq\mathrm{rank}(a(x))\leq h(x),$$ and hence $$\abs{d_{\tau_x}(a)-f_i(\tau_x)}\leq \abs{\frac{g(x)}{n_x}-\frac{h(x)}{n_x}}<\frac{\eps}{2}.$$ 

Assume that the assertion is true for $l=K$. There exists $a_K\in R^{(K)}$ such that $$g_k(x)\leq\mathrm{rank}(\pi_k(a_K)(x))\leq h_k(x), \quad\ 0\leq k\leq K.$$ Consider $b=\varphi(a_K)\in \textrm{M}_n(\textrm{C}(Y_{K+1}))$. Then, for any $y\in Y_{K+1}$, $$g_{K+1}(y)\leq \mathrm{rank}(b(y))\leq h_{K+1}(y).$$ Indeed, there are points $\{y_1,..., y_m\}\subseteq \coprod_{j=1}^K X_j$ such that the representation $\pi_y$ of $R^{(k)}$ is unitarily equivalent to the direct sum of $\pi_{y_i}$. Therefore, $$n_y\tau_y=\sum n_{y_j}\tau_{y_j},$$ and hence
\begin{eqnarray*}
\textrm{rank}(b(y))&=&\textrm{rank}(a(y_j))\\
&=&\sum h_j(y_j)\\
&\leq& \sum (\lceil n_{y_j}f_i(\tau(y_j))\rceil -2)\\
&\leq& \lceil n_yf_i(\tau_y)\rceil-2=h_{K+1}(y).
\end{eqnarray*}
The same argument shows that $g_{k+1}(y)\leq\textrm{rank}(b(y))$. Therefore, by Proposition 2.9 of \cite{Toms-SDG}, there is a function $a'\in  \textrm{M}_n(\textrm{C}(X_{K+1}))$ such that the restriction of $a'$ to $Y_{K+1}$ equals to $b$ and $$g_{K+1}(x)\leq \mathrm{rank}(a(x))\leq h_{K+1}(x), \quad\forall x\in X_{K+1}.$$ Thus, the assertion holds for $l=K+1$, and hence for $C_i$ with arbitrary length.

Therefore it follows from the assertion that $\abs{d_{\tau}(a)-f_i(\tau)}<\eps/2$ for all $\tau\in\mathrm{T}(C_i)$, and hence for all $\tau\in\mathrm{T}(A)$, one has
\begin{eqnarray*}
\abs{d_{\tau}(a)-f(\tau)}&\leq&\abs{d_{\tau}(a)-\iota'_i(f_i)(\tau)}+\abs{\iota'_i(f_i)(\tau)-f(\tau)}\\
&<&\abs{d_{\tau\circ\iota_i}(a)-f_i(\tau\circ\iota_i)}+\eps/2\\
&<&\eps,
\end{eqnarray*}
as desired.
\end{proof}

\begin{cor}\label{rob}
{Any simple unital AH-algebra with diagonal maps and mean dimension zero is an AH-algebra without dimension growth.}
\end{cor}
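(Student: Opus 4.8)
The plan is to identify the Cuntz semigroup of $A$ and then feed this into known $\mathcal Z$-stability and classification theorems. First I would invoke Theorem \ref{LOA}: since the mean dimension of $A$ is zero, for every finite $\F\subseteq A$ and every $\eps>0$ there is a unital homogeneous (hence recursive sub-homogeneous) subalgebra $C\subseteq A$ with $\F\subseteq_\eps C$ whose dimension ratio is as small as we wish; in other words $A$ has local slow dimension growth. This is precisely the hypothesis of Lemma \ref{LOA-AD}, so every strictly positive continuous affine function on $\mathrm{T}(A)$ is realized, up to $\eps$, as $d_\tau(a)$ for some positive $a$ in a matrix algebra over $A$. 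Combined with the lower semicontinuity Lemmas \ref{LSC-EL} and \ref{LSC-TR} and a routine approximation, this shows that the canonical map from the purely positive part of $W(A)$ into $\mathrm{SAff}(\mathrm{T}(A))$ is onto a dense subset, and after passing to the completed Cuntz semigroup $\mathrm{Cu}(A)$ it hits every strictly positive element of $\mathrm{SAff}(\mathrm{T}(A))$.

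Next I would pin down the order. By Theorem \ref{MD0-Comp}, $A$ has strict comparison of positive elements, so on the soft part of $\mathrm{Cu}(A)$ the order is governed entirely by the dimension functions; by Theorem \ref{K0-comp} the order on the projection part $\mathrm{V}(A)$ is governed by traces; and, using that every simple AH-algebra with diagonal maps has stable rank one (as recalled just before Theorem \ref{LOA}), the interaction of the two pieces is the standard one, with no nontrivial identifications between $\mathrm{V}(A)$ and the soft part and with every soft element a supremum of an increasing sequence of soft elements strictly below it. Assembling these facts yields $\mathrm{Cu}(A)\cong\mathrm{V}(A)\sqcup\mathrm{SAff}(\mathrm{T}(A))$, i.e.\ $\mathrm{Cu}(A)$ agrees with the Cuntz semigroup of a $\mathcal Z$-stable C*-algebra with the same $\Kzero$-data and the same trace simplex.

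With the Cuntz semigroup computed, I would apply the $\mathcal Z$-stability criterion of W. Winter: a simple separable unital C*-algebra that is locally approximated by recursive sub-homogeneous algebras of small dimension ratio and whose Cuntz semigroup has this form satisfies $A\cong A\otimes\mathcal Z$. Finally, a simple separable unital $\mathcal Z$-stable AH-algebra is classified by its Elliott invariant by the results of Gong and of Elliott--Gong--Li, and the range of the Elliott invariant over simple unital AH-algebras without dimension growth already exhausts every such $\mathcal Z$-stable invariant; hence $A$ is isomorphic to a simple unital AH-algebra without dimension growth, as claimed.

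The hard part will be the Cuntz semigroup computation itself: one must check carefully that strict comparison, surjectivity onto the strictly positive elements of $\mathrm{SAff}(\mathrm{T}(A))$, and stable rank one together determine $\mathrm{Cu}(A)$ up to isomorphism, the delicate points being the behaviour at the interface between the projection part and the soft part and the handling of lower semicontinuous affine functions (not necessarily continuous, not necessarily strictly positive) as suprema of increasing sequences. Once $\mathrm{Cu}(A)$ is identified, the passage to $\mathcal Z$-stability and then to the classification theorem is a matter of citation.
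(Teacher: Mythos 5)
Your proposal follows essentially the same route as the paper: the paper's proof likewise combines Theorem \ref{MD0-Comp} and Lemma \ref{LOA-AD} to identify the Cuntz semigroup as $\mathrm{V}(A)\sqcup\mathrm{SAff}(\mathrm{T}(A))$, invokes Winter's result to deduce $\mathcal Z$-stability, and then concludes by citation to the classification literature. The only cosmetic difference is in the last step, where the paper routes the classification through the $\mathcal Z$-stability results of Winter, Lin, and Lin--Niu rather than directly through Gong and Elliott--Gong--Li; this does not change the argument.
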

\begin{proof}
By Theorem \ref{MD0-Comp} and Lemma \ref{LOA-AD}, the Cuntz semigroup of an AH-algebra with mean dimension zero is $\mathrm{V}(A)\sqcup\mathrm{SAff}(\mathrm{T}(A))$. It then follows from \cite{Winter-Z-stable-02} that it is $\mathcal Z$-stable. By \cite{Winter-Z}, \cite{Lin-Asy}, \cite{L-N}, and \cite{Lnclasn}, {this C*-algebra is an AH-algebra} without dimension growth.
\end{proof}

\begin{cor}\label{cor-general}
If a unital simple AH-algebra with diagonal maps has at most countably many extremal tracial states, or if there is $M>0$ such that $\rho^{-1}(\kappa)$ has at most $M$ extreme points for any $\kappa\in\mathrm{S}(\Kzero(A))$, then it is an AH-algebra without dimension growth.
\end{cor}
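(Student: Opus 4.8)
The plan is to reduce this statement to Corollary \ref{rob} by checking, in each of the two cases, that $A$ has mean dimension zero; once that is known, Corollary \ref{rob} immediately gives that $A$ is an AH-algebra without dimension growth. So the proof naturally divides into two short arguments, one per hypothesis, followed by a single invocation of Corollary \ref{rob}.

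For the first case I would argue as follows. Let $c$ denote the cardinality of the set of extremal tracial states of $A$. The hypothesis says $c\le\aleph_0$ (allowing $c$ to be finite), so by elementary cardinal arithmetic $\aleph_0 c\le\aleph_0\cdot\aleph_0=\aleph_0$, which is strictly less than the continuum. Hence the hypothesis of Corollary \ref{lisa} is satisfied, and that corollary (via Lemma \ref{baby-lisa} together with Theorem \ref{nancy}) shows that $A$ has the SBP, and in particular mean dimension zero.

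For the second case there is essentially nothing to do: the hypothesis that $\rho^{-1}(\kappa)$ has at most $M$ extreme points for every $\kappa\in\mathrm{S}_u(\Kzero(A))$ is exactly the hypothesis of Theorem \ref{RR0-SBP}, which gives that $A$ has SBRP and hence mean dimension zero.

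In both cases $A$ is then a simple unital AH-algebra with diagonal maps of mean dimension zero, so Corollary \ref{rob} finishes the proof. I do not expect any genuine obstacle here: the corollary is simply the common specialization of the two mean-dimension-zero criteria combined with the classification-flavoured Corollary \ref{rob}. The only points that need a moment's care are the cardinal-arithmetic step in the first case and the bookkeeping needed to see that the cited results' hypotheses match verbatim (for instance, that "at most countably many extremal traces" covers the finite and empty cases, in which $\aleph_0 c=\aleph_0$ as well).
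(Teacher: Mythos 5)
Your proposal is correct and matches the paper's own proof exactly: the paper's argument is precisely the combination of Corollary \ref{lisa} (for the countable-extremal-traces case), Theorem \ref{RR0-SBP} (for the bounded-fibre case), and Corollary \ref{rob}. You have simply spelled out the routine verifications that the paper leaves implicit.
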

\begin{proof}
It follows from Corollary \ref{rob}, Theorem \ref{RR0-SBP}, and Corollary \ref{lisa}.
\end{proof}

In the following, let us consider the tensor product $A\otimes A$, where $A$ is any simple AH-algebra with diagonal maps (without any assumptions on dimension growth or mean dimension). As another application of Lemma \ref{LOA-RC} and Lemma \ref{LOA-AD}, we shall show that $A\otimes A$ is always an AH-algebra with slow dimension growth.
\begin{lem}\label{approx-general}
Let $A$ be a simple unital AH-algebra with diagonal maps. Then, for any finite subset $\mathcal F\subseteq A$ and any $\eps>0$, there is a unital sub-C*-algebra $C\subseteq A$ with $C\cong\bigoplus_{i=1}^h p_i\MC{r_i}{\Omega_i}p_i$ such that $\mathcal F\subseteq_\eps C$ and 
$$\frac{\mathrm{dim}(\Omega_i)}{\mathrm{rank}(p_i)}\cdot\frac{1}{\min\{\mathrm{rank}(p_i);\ i=1, ..., h\}}<\eps,\quad\forall i.$$
\end{lem}

\begin{proof}
Suppose that $A=\varinjlim(A_i, \varphi_{i})$, where $A_i=\bigoplus_{l=1}^{h_i}\MC{r_{i, l}}{X_{i, l}}$, and assume that $\mathcal F\subseteq A_1$. Choose a finite open cover $\alpha$ of $X_1:=\bigcup_l X_{1, l}$ such that for any $U\in\alpha$, one has
$$\norm{f(x)-f(y)}<\eps,\quad\forall f\in\mathcal F,\ \forall x, y\in U.$$

Let $\{\phi_U: X_1\to [0, 1];\ U\in\alpha\}$ be a partition of unity which is subordinate to $\alpha$. Consider the $\mathrm{ord}(\alpha)$-dimensional simplicial complex $\Delta$ constructed as follows: The vertices of $\Delta$ correspond to the elements of $\alpha$, the $s$-dimensional simplices correspond to all $U_1, ..., U_s$ with ${\bigcap U_i\neq \textrm{\O}}$. The point in each simplex $\{U_1, ..., U_s\}$ can be parameterized as 
$$\sum_{i=1}^s\lambda_1[U_i]\quad\textrm{with $\lambda_i\geq 0$ and $\sum_i\lambda_i=1.$}$$
Define a map $\xi: X_1\to \Delta$ by $$x\mapsto \sum_{x\in U\in\beta}\phi_U(x)[U],$$ and it induces a *-homomorphism 
$$\Xi: \bigoplus_l\MC{r_{1, l}}{\Delta_{l}}\to {\bigoplus_l\MC{r_{1, l}}{X_{1, l}}}=A_1,$$
where $\Delta_l$ is the connected component of $\Delta$ corresponding to $X_{1, l}$. Then the same argument as that of Theorem \ref{LOA} shows that any element of $\mathcal F$ can be approximated by the elements of the image of $\Xi$ up to $\eps$. 

Note that $\mathrm{dim}(\Delta)=\mathrm{ord}(\alpha)<\infty$, and $A$ is assumed to be simple. One may assume that $A_2$ is far enough so that the multiplicities $\{m_{1, 2}^{l, k};\ 1\leq l\leq h_1, 1\leq k\leq h_2\}$ of $\varphi_{1, 2}$ are sufficiently large such that
\begin{equation}\label{larg-mult}
\frac{\mathrm{dim}(\Delta)}{\min\{r_{1, l};\ 1\leq l\leq h_1\}}\cdot\frac{1}{{\min\{m_{1, 2}^{l, k};\ 1\leq l\leq h_1, 1\leq k\leq h_2\}}}<\eps.
\end{equation}

For each $1\leq k\leq h_2$, consider $\varphi_{1, 2}^k$, the restriction of $\varphi_{1, 2}$ to $A_1$ and $\MC{r_{2, k}}{X_{2, k}}$. Then one has a factorization 
\begin{displaymath}
\xymatrix{A_1\ar[r]^{\Psi} & B\ar[r] & \MC{r_{2, k}}{X_{2, k}},}
\end{displaymath} 
where $$B=\bigoplus_{l=1}^{h_1}\MC{m_{1, 2}^{l, k}r_{1, l}}{\underbrace{X_{1, l}\times\cdots\times X_{1, l}}_{m_{1, 2}^{l, k}}}.$$ 
Indeed, the map $\Psi: A_1\to B$ is induced by
$$\MC{r_{1, l}}{X_{1, l}}\ni f\mapsto \mathrm{diag}\{f\circ\pi_1, ..., f\circ\pi_{m_{1, 2}^{l, k}} \}\in \MC{m_{1, 2}^{l, k} r_{1, l}}{\underbrace{X_{1, l}\times\cdots\times X_{1, l}}_{m_{1, 2}^{l, k}}},$$
where $\pi_1$, ..., $\pi_{m_{1, 2}^{l, k}}$ are the coordinator projections from $X_{1, l}\times\cdots\times X_{1, l}$ to each copy of $X_{1, l}$, and the map $B\to \MC{r_{2, k}}{X_{2, k}}$ is induced by 
$$\bigoplus_{l=1}^{h_1}\MC{m_{1, 2}^{l, k}r_{1, l}}{\underbrace{X_{1, l}\times\cdots\times X_{1, l}}_{m_{1, 2}^{l, k}}}\ni (f_1, ..., f_{h_1}) \mapsto \mathrm{diag}\{f_1\circ\Lambda_1, ..., f_{h_1}\circ\Lambda_{h_1}\}\in \MC{r_{2, k}}{X_{2, k}},$$
where each continuous map $\Lambda_{l}: X_{2, k}\to \underbrace{X_{1, l}\times\cdots\times X_{1, l}}_{m_{1, 2}^{l, k}}$ is defined by $$\Lambda_l(x)=(\lambda_{1, 2}^{l, k}(1)(x), ..., \lambda_{1, 2}^{l, k}(m_{1, 2}^{l, k})(x)),$$
where $\lambda_{1, 2}^{l, k}(1)$, ..., $\lambda_{1, 2}^{l, k}(m_{1, 2}^{l, k})$ are eigenvalue maps from $X_{2, k}$ to $X_{1, l}$.

Consider the C*-algebra $B$, and define the map
$$\tilde{\Xi}: \bigoplus_{l=1}^{h_1}\MC{m_{1, 2}^{l, k}r_{1, l}}{\underbrace{\Delta_{1, l}\times\cdots\times \Delta_{1, l}}_{m_{1, 2}^{l, k}}} \to \bigoplus_{l=1}^{h_1}\MC{m_{1, 2}^{l, k}r_{1, l}}{\underbrace{X_{1, l}\times\cdots\times X_{1, l}}_{m_{1, 2}^{l, k}}}$$ 
by
$$\MC{m_{1, 2}^{l, k}r_{1, l}}{\underbrace{\Delta_{1, l}\times\cdots\times \Delta_{1, l}}_{m_{1, 2}^{l, k}}}\ni f\mapsto f\circ (\xi_l, ..., \xi_l)\in \MC{m_{1, 2}^{l, k}r_{1, l}}{\underbrace{X_{1, l}\times\cdots\times X_{1, l}}_{m_{1, 2}^{l, k}}},$$
where $\xi_l$ is the restriction of $\xi$ to $X_{1, l}$.

Then one has the commutative diagram
\begin{displaymath}
\xymatrix{
A_1 \ar[r]^{\Psi}  &   B \\
\bigoplus_l\MC{r_{1, l}}{\Delta_{l}} \ar[u]^{\Xi} \ar[r] & \bigoplus_{l=1}^{h_1}\MC{m_{1, 2}^{l, k}r_{1, l}}{\underbrace{\Delta_{1, l}\times\cdots\times \Delta_{1, l}}_{m_{1, 2}^{l, k}}} \ar[u]_{\tilde{\Xi}},
}
\end{displaymath}
where the bottom horizontal map is induced by the coordinator projections. In particular, the image of the map $\Psi\circ\Xi$ is contained in the image of $\tilde{\Xi}$, and hence the finite subset $\mathcal F$ is approximately contained in the image of $\tilde{\Xi}$ up to $\eps$.

Denoted by $$C=\bigoplus_{k}\bigoplus_{l=1}^{h_1}\MC{m_{1, 2}^{l, k}r_{1, l}}{\Omega_{l, k}}$$ the image of $\tilde{\Xi}$, where $\Omega_{l, k}$ is a closed subset of in $\underbrace{\Delta_{1, l}\times\cdots\times \Delta_{1, l}}_{m_{1, 2}^{l, k}}$ (the image under $(\xi_l, ..., \xi_l)$). Hence, one has that $\mathcal F\subseteq_{\eps} C$ and
$$\mathrm{dim}(\Omega_{l, k}) \leq m_{1, 2}^{l, k}\cdot \mathrm{dim}(\Delta).$$
Therefore, by \eqref{larg-mult}, one has
\begin{eqnarray*}
&&\frac{\mathrm{dim}(\Omega_{l, k})}{m_{1, 2}^{l, k} r_{1, l}} \cdot\frac{1}{\min\{m_{1, 2}^{l, k} r_{1, l};\ 1\leq l\leq h_1, 1\leq k\leq h_2\}}\\
&\leq& \frac{\mathrm{dim}(\Delta)}{r_{1, l}} \cdot\frac{1}{\min\{m_{1, 2}^{l, k} r_{1, l};\ 1\leq l\leq h_1, 1\leq k\leq h_2\}}\\
&<&\eps.
\end{eqnarray*}
as desired.

Suppose, in general, that $A=\varinjlim(A_i, \varphi_{i})$, where $A_i=\bigoplus_{l=1}^{h_i}p_{i, l}\MC{r_{i, l}}{X_{i, l}}p_{i, l}$ and each $p_{i, l}$ is a projection in $\MC{r_{i, l}}{X_{i, l}}$. Then there is a simple unital AH-algebra $D$ with diagonal maps and a projection $p\in D$ such that $A\cong pDp$ (in fact, the AH-algebra $D$ is the limit of $(\bigoplus_{l=1}^{h_i}\MC{r_{i, l}}{X_{i, l}}, \tilde{\varphi_i}$), where $\tilde{\varphi_i}$ is induced by $\varphi_i$). Since $B$ is simple, there is $\delta>0$ such that $\tau(p)>\delta$ for all $\tau\in\mathrm{T}(B)$.

By the conclusion above, there is a unital sub-C*-algebra $C\subseteq B$ such that $C\cong \bigoplus_{k=1}^h \MC{r_k}{\Omega_k}$, $\mathcal F\subseteq_{\eps} C$ and 
\begin{equation}\label{contr-big}
\frac{\mathrm{dim}(\Omega_k)}{r_k}\cdot\frac{1}{\min\{r_k;\ 1\leq k\leq h\}}<\eps\delta^2,\quad\forall k.
\end{equation}
Without loss of generality, one may assume that each element of $\mathcal F$ has norm at most one, and $p\in C$ (otherwise, let $p\in\mathcal F$, and then apply a conjugation of $C$ by a unitary which is very close to $1_B$). Moreover, by an asymptotical sequence argument, one may also assume that 
\begin{equation}\label{lb-trace}
\tau(p)\geq\delta,\quad\forall \tau\in\mathrm{T}(C).
\end{equation}

Then one has that $pCp\subseteq pDp=A$ and $$\mathcal F\subseteq_{\eps} pCp\cong \bigoplus_k p_k\MC{r_k}{\Omega_k}p_k,$$ where $p_k$ is the direct summand of $p$ in $\MC{r_k}{\Omega_k}$. Assume that $\min\{\mathrm{rank}(p_k);\ 1\leq k\leq h\}$ is obtained on $p_j$. By \eqref{lb-trace}, one has 
$$\frac{\mathrm{rank}(p_k)}{r_k}\geq\delta,\quad\forall k,$$
and therefore, for any $k$, by \eqref{contr-big}, one has
\begin{eqnarray*}
&&\frac{\mathrm{dim}(\Omega_k)}{\mathrm{rank}(p_k)}\cdot\frac{1}{\min\{\mathrm{rank}(p_k);\ k=1, ..., h\}}\\
&=& \frac{\mathrm{dim}(\Omega_k)}{r_k}\cdot \frac{r_k}{\mathrm{rank}(p_k)}\cdot\frac{r_j}{\mathrm{rank}(p_j)}\cdot\frac{1}{r_j}\\
&\leq &\delta^2 \cdot \frac{\mathrm{dim}(\Omega_k)}{r_k}\cdot \frac{1}{r_j}\\
&\leq &\delta^2 \cdot \frac{\mathrm{dim}(\Omega_k)}{r_k}\cdot \frac{1}{\min\{r_k;\ 1\leq k\leq h\}}\\
&<&\delta^2\cdot \frac{\eps}{\delta^2}=\eps,
\end{eqnarray*}
as desired.
\end{proof}

\begin{thm}\label{tensor}
Let $A$ be an arbitrary simple unital AH-algebra with diagonal maps (without any assumptions on dimension growth or mean dimension). Then $A\otimes A$ is an AH-algebra without dimension growth. In other words, the Toms' growth rank (\cite{Toms-GR}) of $A$,  denoted by $\mathrm{gr}(A)$, is at most two.
\end{thm}
\begin{proof}
Let us show that for any finite subset $\F\subseteq A\otimes A$ and any $\eps>0$, there is a homogeneous C*-algebra $D\subseteq A\otimes A$ such that $\F\subseteq_\eps D$ and the dimension ratio of $D$ is at most $\eps$. Then the theorem follows from Lemma \ref{LOA-RC} and Lemma \ref{LOA-AD}.

Without loss of generality, one may assume that $$f_i=\sum_{j=1}^{l_i}f_{i, j}^{(1)}\otimes f_{i, j}^{(2)}$$ for each $f_i\in\F$. Set $l=\max_i\{l_i\}$, $s=\max_{i, j}\{\norm{f^{(1)}_{i, j}}, \norm{f^{(2)}_{i, j}}\}$, and $$\mathcal G=\{f_{i, j}^{(1)}, f_{i, j}^{(2)};\ f_i\in\F\}.$$ 

By Lemma \ref{approx-general}, there is $C\subseteq A$ with $C\cong \bigoplus_{k=1}^h p_k\MC{r_k}{\Omega_k}p_k$ such that $\mathcal G\subseteq_{\eps/ls^2} C$ and
\begin{equation}\label{contr-big-1}
\frac{\mathrm{dim}(\Omega_k)}{\mathrm{rank}(p_k)}\cdot\frac{1}{\min\{\mathrm{rank}(p_i);\ i=1, ..., h\}}<\eps/2,\quad\forall k.
\end{equation}
One then has that $\F\subseteq_{\eps} C\otimes C$. Moreover,
$$C\otimes C \cong \bigoplus_{i, j} (p_i\otimes p_j) \MC{r_ir_j}{\Omega_i\times\Omega_j} (p_i\otimes p_j).$$
By \eqref{contr-big-1}, for any $1\leq i, j\leq h$, one has
\begin{eqnarray*}
\frac{\mathrm{dim}(\Omega_i\times\Omega_j)}{\mathrm{rank}(p_i \otimes p_j)} & = &\frac{\mathrm{dim}(\Omega_i)+\mathrm{dim}(\Omega_j)}{\mathrm{rank}(p_i) \mathrm{rank}(p_j)}\\
&\leq & \frac{\mathrm{dim}(\Omega_i)}{\mathrm{rank}(p_i)}\cdot\frac{1}{\mathrm{rank}(p_j)} + \frac{\mathrm{dim}(\Omega_j)}{\mathrm{rank}(p_j)}\cdot\frac{1}{\mathrm{rank}(p_i)}\\
&<&\frac{\eps}{2}+\frac{\eps}{2}=\eps.
\end{eqnarray*}
Therefore $D:=C\otimes C$ is the desired sub-C*-algebra.
\end{proof}

\section{Radius of comparison for AH-algebras with diagonal maps}\label{comp-radius}

In this section, a lower bound for the mean dimension is given in the term of radius of comparison. 

\begin{lem}\label{LOA-RC1}
Let $A$ be a simple exact C*-algebra satisfying the following properties: 
\begin{enumerate}
\item there is $r\geq 0$ such that for any finite subset $\F\subseteq A$ and any $\eps_1, \eps_2>0$, there is a sub-C*-algebra $C$ such that $\F\subseteq_{\eps_1} C$ and $\mathrm{rc}(C)<r+\eps_2$;
\item for any $0\leq s\leq 1$ and any $\eps>0$, there exists a projection $z$ with $\abs{\tau(z)-s}<\eps$, $\forall\tau\in\mathrm{T}(A)$.
\end{enumerate}
Then one has that $\mathrm{rc}(A)\leq r$.
\end{lem}
\begin{proof}
Since $A$ is exact (and hence all sub-C*-algebras $C_i$), any lower semicontinuous dimension function on $W(A)$ is induced by a tracial state.

Let $a$ and $b$ be positive elements in $A$ with $$d_{\tau}(a)+r'<d_{\tau}(b),\quad\forall \tau\in\mathrm{T}(A)$$ for some $r'>r$. Let us show that $a \precsim b$.

Choose a projection $z$ such that $r<\tau(z)<r'$, $\forall\tau\in\textrm{T}(A)$. Since $d_{\tau}(a\oplus z)<d_{\tau}(b)$ for any tracial state of $A$, there is a rational number $0<c<1$ such that $d_{\tau}(a\oplus z)<cd_{\tau}(b)$ for all tracial state $\tau$ on $A$, and hence there are natural numbers $m<m'$ such that $$m'd_{\tau}(a\oplus z)<md_{\tau}(b).$$ By Proposition 3.1 of \cite{RorUHF-II}, there exists $n\in\mathbb N$ an $d\in W(A)$ such that
\begin{equation}\label{4030}
n(m'[a\oplus z])+ d\leq nm[b]+ d
\end{equation} 
in $W(A)$. Applying \eqref{4030} $k$ times, one has $$km'n([a\oplus z])+ d\leq kmn[b]+ d,\quad\forall k\in \mathbb N.$$ Since $W(A)$ is simple, there exists $l$ such that $d\leq l[b]$, and hence $$km'n[a\oplus z]\leq (kmn+l)[b]\quad\forall k\in\mathbb N.$$ Choose $k$ sufficiently large so that $km'n>kmn+l$, and set $N=km'n-1$, one has $$(N+1)[a\oplus z]\leq N[b].$$

One can then follows the same proof as that of Lemma \ref{LOA-RC}. Fix $\eps>0$ for the time being. It follows from Lemma \ref{localize} that there exist $C_i$ and $a_i, b_i, z_i\in C_i$ such that $z_i$ is a projection,  $$\norm{a_i-a}\leq\eps,\quad \norm{b_i-b}\leq\eps\quad \textrm{and}\quad b_i\precsim b, $$ and 
\begin{equation}\label{eq410}
\bigoplus_{N+1}((a_i-\eps)_+\oplus z_i)\precsim \bigoplus_Nb_i
\end{equation} 
in $C_i$. Moreover, using the simplicity of $A$, the compactness of the simplex of tracial states of $A$, and the lower-semicontinuity of $d_\tau$, one may assume that there is a strictly positive number $c$ which is independent of $C_i$ such that 
\begin{equation}\label{4033}
d_\tau(b_i)>c\quad\textrm{and}\quad r<\tau(z_i)<r'
\end{equation} for any tracial state $\tau$ of $C_i$. 

Indeed, for any $\eps'>0$, consider $f_{\eps'}(b_i)$. Fix $\eps'$, and it is clear that $f_{\eps'}(b_i)\to f_{\eps'}(b)$. Since $A$ is simple, there exists $c>0$ such that $$\tau(f_{\eps'}(b))>c$$ for any tracial state $\tau$ of $A$.  Then, there exists a sub-C*-algebra $C_i$ such that 
\begin{equation}\label{4031}
\tau(f_{\eps'}(b_i))>c
\end{equation} for all tracial state $\tau$ of $C_i$. If this were not true, there is a sequence of $C_i$ with dense union in $A$, positive elements $b_i\in C_i$ and a sequence of tracial state $\tau_i$ on $C_i$ such that $$\norm{f_{\eps'}(b_i)-f_{\eps'}(b)}\leq 1/2^i\quad\textrm{and}\quad{\tau_i}(f_{\eps'}(b_i))\leq c.$$ Extend $\tau_i$ to a state of $A$, and pick an accumulation point $\tau_\infty$, and assume that $\tau_i\to\tau_\infty$. One then has $$\tau_i(f_{\eps'}(b))\leq c+1/2^i,$$ and  $$\tau_\infty(f_{\eps'}(b))=\lim_{i\to\infty} \tau_i(f_{\eps'}(b))\leq c,$$ which is a contradiction. This proves \eqref{4031}. Thus, $$d_{\tau}(b_i)=\sup_{\eps'>0}f_{\eps'}(b_i)\geq \tau(f_{\eps'}(b_i))>c.$$ Note that $c$ is independent of $C_i$. A similar argument shows that $r<\tau(z_i)<r'$.

Therefore, one may assume that $C_i$ is large enough such that $\mathrm{rc}(C_i)<r+c/(N+1)$. It follows from \eqref{eq410} that $$d_\tau((a_i-\eps)_+)+d_\tau(z_i)+\frac{1}{N+1}d_\tau(b_i)\leq d_\tau(b_i)$$ for any tracial state $\tau$ of $C_i$. By \eqref{4033}, $c_i=\inf_\tau\{d_\tau(b_i)\}\geq c$ is strictly positive, and hence one has $$d_\tau((a_i-\eps)_+)+r+\frac{c}{N+1}\leq d_\tau(b_i)$$ for any tracial state $\tau$ of $C_i$.

Since $\mathrm{rc}(C_i)<r+c/(N+1)$, one has that $$(a_i-\eps)_+\precsim b_i.$$ Note that $\norm{(a-\eps)_+-(a_i-\eps)_+}\leq3\eps$, one has that $(a-4\eps)_+\precsim(a_i-\eps)_+$, and hence $$(a-4\eps)_+\precsim(a_i-\eps)_+\precsim b_i\precsim b.$$ Since $\eps$ is arbitrary, one has that $a\precsim b$, as desired.
\end{proof}

\begin{thm}
If an AH-algebra $A$ with diagonal maps has mean dimension $\gamma$, then $\mathrm{rc}(A)\leq \gamma/2$.
\end{thm}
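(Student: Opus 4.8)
The plan is to combine the local approximation theorem (Theorem \ref{LOA}) with the permanence result for the radius of comparison (Lemma \ref{LOA-RC1}). By Theorem \ref{LOA}, for every finite $\F\subseteq A$ and all $\eps_1,\eps_2>0$ there is a unital homogeneous subalgebra $C\cong\bigoplus_i p_i\MC{n_i}{\Omega_i}p_i\subseteq A$ with $\F\subseteq_{\eps_1}C$ and $\mathrm{dim}(\Omega_i)/\mathrm{rank}(p_i)<\gamma+\eps_2$ for every $i$. Since $A$ is simple and, being an AH-algebra, exact, Lemma \ref{LOA-RC1} (applied with $r=\gamma/2$) reduces the theorem to the single estimate: every such $C$ satisfies $\mathrm{rc}(C)\le\tfrac12\max_i\mathrm{dim}(\Omega_i)/\mathrm{rank}(p_i)$. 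Indeed, granting this, the $C$ furnished for a given $\eps_2$ has $\mathrm{rc}(C)<(\gamma+\eps_2)/2<\gamma/2+\eps_2$, so Lemma \ref{LOA-RC1} yields $\mathrm{rc}(A)\le\gamma/2$.

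To prove the estimate on $C$, I would first observe that $\mathrm{rc}(\bigoplus_i B_i)\le\max_i\mathrm{rc}(B_i)$ for a finite direct sum of unital C*-algebras: a Cuntz subequivalence can be assembled block by block, and the pure-block tracial states test each block in isolation. Thus it suffices to treat one summand $B=p\MC{n}{X}p$; write $N=\mathrm{rank}(p)$ and $d=\mathrm{dim}(X)$, and the goal becomes $\mathrm{rc}(B)\le d/(2N)$. Take $a,b\in B$ positive with $d_\tau(a)+r<d_\tau(b)$ for all $\tau\in\mathrm{T}(B)$ and some fixed $r>d/(2N)$; we must show $a\precsim b$. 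Every tracial state of $B$ is an average of the point-evaluation traces $\tau_x$ ($x\in X$), normalized so that $\tau_x(p)=1$, and $d_{\tau_x}(a)=\tfrac1N\mathrm{rank}(a(x))$; testing the hypothesis against the $\tau_x$ therefore gives the pointwise rank inequality $\mathrm{rank}(a(x))+rN<\mathrm{rank}(b(x))$, i.e. $\mathrm{rank}(b(x))-\mathrm{rank}(a(x))>d/2$ for all $x\in X$.

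Since the pointwise rank of a positive section is only lower semicontinuous, the next step is to pass to cut-downs. Using the lower semicontinuity of $d_{\tau_x}$ in each variable (Lemmas \ref{LSC-EL} and \ref{LSC-TR}), the compactness of $X$, and Lemma \ref{Rordam1}, one can choose $\eta,\eta'>0$ so that $\mathrm{rank}((a-\eta)_+(x))+\lceil d/2\rceil\le\mathrm{rank}((b-\eta')_+(x))$ for all $x\in X$. At this point the classical vector-bundle machinery applies: over a base of dimension $d$ the locus of non-injective linear maps $\mathbb C^m\to\mathbb C^k$ has real codimension $2(k-m+1)$, which exceeds $d$ precisely once the rank gap $k-m$ is at least of the order $d/2$ — this is the source of the factor $\tfrac12$ — so a generic section realizes $(a-\eta)_+$ as Cuntz-below $(b-\eta')_+$. (This is the positive-element refinement of the bundle facts used in the proof of Theorem \ref{K0-comp}; compare \cite[Prop.~2.9]{Toms-SDG} and \cite[Thms.~(9).1.2 and (9).1.5]{Husemoller-Fibre-Bundles}.) Hence $(a-\eta)_+\precsim(b-\eta')_+\precsim b$, and letting $\eta\to0^+$ gives $a\precsim b$. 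Thus $\mathrm{rc}(B)\le d/(2N)$, and summing over blocks $\mathrm{rc}(C)\le\tfrac12\max_i\mathrm{dim}(\Omega_i)/\mathrm{rank}(p_i)$, which together with Lemma \ref{LOA-RC1} completes the argument.

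I expect the main obstacle to be exactly this single-summand bound $\mathrm{rc}(p\MC{n}{X}p)\le\mathrm{dim}(X)/(2\,\mathrm{rank}(p))$, and within it the delicate passage from the trace inequality to a \emph{uniform} pointwise rank inequality between cut-downs of $a$ and $b$ without eroding the gap below the value needed for the embedding (this is where compactness of $X$ and the two semicontinuity lemmas enter), followed by invoking the correct obstruction-theoretic comparison statement for positive sections — rather than for projections — over a finite-dimensional base. Everything else is routine bookkeeping with the $\eps$'s layered on top of the already-established Theorem \ref{LOA} and Lemma \ref{LOA-RC1}.
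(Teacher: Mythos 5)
Your overall architecture is exactly the paper's: apply Theorem \ref{LOA} to get homogeneous subalgebras $C$ with dimension ratio below $\gamma+\eps_2$, bound $\mathrm{rc}(C)$ by half that ratio, and feed this into Lemma \ref{LOA-RC1}. The only divergence is at the middle step: the paper treats the bound $\mathrm{rc}\bigl(p\MC{n}{X}p\bigr)\leq\mathrm{dim}(X)/(2\,\mathrm{rank}(p))$ as a black box, citing Corollary 5.2 of \cite{Toms-Comp-DS}, whereas you attempt to re-derive it. Your block-by-block reduction and the passage from the trace hypothesis to the pointwise rank inequality via the point-evaluation traces are fine, and you correctly locate the source of the factor $\tfrac12$ in the codimension $2(k-m+1)$ of the non-injective locus.

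The re-derivation itself, however, has a real gap at the step you half-acknowledge. You propose to choose $\eta'>0$ so that $\mathrm{rank}((a-\eta)_+(x))+\lceil d/2\rceil\leq\mathrm{rank}((b-\eta')_+(x))$ uniformly on $X$; but $\mathrm{rank}(b(\cdot))$ is only lower semicontinuous, so for \emph{every} $\eta'>0$ there may be points where $(b-\eta')_+$ loses rank relative to $b$, and compactness plus semicontinuity do not rescue a uniform lower bound on $\mathrm{rank}((b-\eta')_+(x))$. The standard fix is to cut down only $a$ (which costs nothing, since $\mathrm{rank}((a-\eta)_+(x))\leq\mathrm{rank}(a(x))$ and one only needs $(a-\eta)_+\precsim b$ for all $\eta>0$), but then the target $b$ is a genuine positive element whose rank function is merely lower semicontinuous, and the ``generic section'' argument for vector bundles does not apply directly: one needs the full machinery for comparison of positive elements over a finite-dimensional base (a decomposition of $\langle b\rangle$ over an exhaustion by open sets, as in \cite{Toms-PLMS} and \cite{Toms-Comp-DS}), not just Husemoller's bundle theorems. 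So either cite Corollary 5.2 of \cite{Toms-Comp-DS} as the paper does, or be prepared to reproduce that argument in full; as written, the single-summand estimate is asserted rather than proved.
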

\begin{proof}
Since the dimensions of the irreducible representations of $A_i$ go to $\infty$ {as $i\to\infty$}, the C*-algebra $A$ satisfies Condition (2) of Lemma \ref{LOA-RC1}. Let us show that $A$ also satisfies Condition (1) of Lemma \ref{LOA-RC1}. 

Since $A$ has mean dimension $\gamma$, by Theorem \ref{LOA}, for any finite subset $\F\subseteq A$ and any $\eps_1>0$ and $\eps_2>0$, there exists a unital sub-C*-algebra $$C\cong\bigoplus\MC{n_i}{\Omega_i}\subseteq A$$ such that $\F\subseteq_{\eps_1} C$, and $$\frac{\mathrm{dim}{\Omega_i}}{n_i}<\gamma+\eps_2.$$

By Corollary 5.2  of \cite{Toms-Comp-DS}, $$\mathrm{rc}(C)\leq \gamma/2+\eps_2.$$ It then follows from Lemma \ref{LOA-RC1} that $\mathrm{rc}(A)\leq \gamma/2$.
\end{proof}

\section{Cuntz mean dimension for AH-algebras with generalized diagonal maps}
\begin{defn}\label{Bcover}
A branched open cover $\tilde{\alpha}$ of a compact Hausdorff space $X$ consists pairs $$\{(U_\lambda, \kappa_\lambda);\ \lambda\in\Lambda\}$$ with $U_\lambda$ an open subset of $X$ and {$\kappa_\lambda$} a Murray-von Neumann {equivalence} class of a projection in $\mathrm{C}(\overline{U}_\lambda, \mathcal K)$ such that $\bigcup_{\lambda\in\Lambda} U_\lambda=X.$ Note $\{U_\lambda\}$ is an ordinary cover of $X$, and denote it by $\alpha$.

For any $U\in\alpha$, define $$\mathcal K_U=\{\kappa_V;\ (V, \kappa_V)\in\tilde{\alpha},\ U=V \}.$$ Then the Murray von Neumann equivalence relation induces a partition $$\mathcal K_U=\mathcal E_U(1)\sqcup\cdots \sqcup \mathcal E_U(n_U).$$ For any $\mathcal E_U(i)$, define $$\mathrm{rank}(\mathcal E_U(i))=\mathrm{rank}(p),$$ where $p$ is any representative of $\mathcal E_U(i)$.

Define the multiplicity of $U$ by $$\mathrm{mul}(U):=\min\{\abs{\mathcal E_U(i)}\cdot\mathrm{rank}(\mathcal E_U(i));\ 1\leq i\leq n_U\},$$ and define the multiplicity of $\tilde{\alpha}$ to be $$\mathrm{mul}(\tilde{\alpha}):=\min\{\textrm{mul}(U);\ U\in\alpha\}.$$
\end{defn}

\begin{defn}\label{Bintersection}
Let $\tilde{\alpha}$ and $\tilde{\beta}$ be two branched covers. Define $\tilde{\alpha}\vee\tilde{\beta}$ to be the branched cover consisting of $$\{(U\cap V, \kappa_{U\cap V});\ U\in\alpha,\ V\in\beta,\ \kappa_{U\cap V}=\kappa_U\ \textrm{or}\ \kappa_{U\cap V}=\kappa_V\}.$$
\end{defn}

\begin{defn}\label{Binduce}
Let $\tilde{\alpha}$ be a branched cover of $X$, and let $\beta$ be any ordinary cover with $\beta\succ \alpha$. For any $W\in\beta$, consider $$\tilde{\alpha}_W=\{(U, \kappa_U);\ (U, \kappa_U)\in\tilde{\alpha}\ \textrm{and}\ W\subseteq U\},\quad \textrm{and}\quad \mathcal K^{\tilde{\alpha}}_W=\{(\kappa_U)|_W;\ (U, \kappa_U)\in\tilde{\alpha}_W\}.$$
Then $$\mathrm{Ind}^{\tilde{\alpha}}_\beta:=\{(W, \kappa_W);\ W\in\beta,\ \kappa_W\in\mathcal{K}_W^{\tilde{\alpha}}\}$$ is a branched cover of $X$ induced by $\alpha$ based on $\beta$.
\end{defn}

\begin{NN}
Consider the homogeneous C*-algebra $A=p(\mathrm{C}(X)\otimes\mathcal K)p$, where $X$ is a compact Hausdorff space, and $p$ is a projection. Let $w$ be an element in $W(A)$ such that there exists an open set $U$ and $d\in \mathbb N$ such that 
\begin{equation}\label{type0}
d_{\tau_x}(w)=\left\{\begin{array}{ll}
d, &\textrm{if}\ x\in U,\\
0, & \textrm{otherwise},
\end{array}
\right.
\end{equation}
where $\tau$ is the Dirac measure concentrated on $x$.
\end{NN}

\begin{lem}
Let $w$ be the Cuntz semigroup element as above. Then, for any open subset $U'\subset U$ with $\overline{U'}\subset U$, there is a vector bundle $\xi$ on $\overline{U'}$ such that $w|_{U'}$ is induced by $\xi$.
\end{lem}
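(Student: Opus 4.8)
The plan is to realise $w$ by an honest positive element, observe that the hypothesis forces that element to have constant matricial rank on $\overline{U'}$, extract a uniform spectral gap from compactness, and use the gap to replace the element by a continuous projection — which is precisely the data of a vector bundle on $\overline{U'}$. First I would write $w=[a]$ for a positive $a$ in a matrix algebra over $A$; since $\mathrm{M}_m(p(\mathrm{C}(X)\otimes\mathcal K)p)\cong p'(\mathrm{C}(X)\otimes\mathcal K)p'$ for a suitable projection $p'\in\mathrm{C}(X)\otimes\mathcal K$, one may assume $a\in(p'(\mathrm{C}(X)\otimes\mathcal K)p')^+$, so that each $a(x)$ is a positive compact operator. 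The key point is that $d_{\tau_x}(w)=d_{\tau_x}(a)=\mathrm{Tr}(\chi_{(0,\infty)}(a(x)))=\mathrm{rank}(a(x))$, the number of nonzero eigenvalues of $a(x)$; hence by \eqref{type0}, together with $\overline{U'}\subseteq U$, one has $\mathrm{rank}(a(x))=d$ for every $x\in\overline{U'}$.

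Next I would produce the spectral gap. Listing the eigenvalues of $a(x)$ with multiplicity as $\lambda_1(x)\ge\lambda_2(x)\ge\cdots$, each $\lambda_j$ is continuous in $x$ because $x\mapsto a(x)$ is norm continuous. On $\overline{U'}$ one has $\lambda_d(x)>0=\lambda_{d+1}(x)$ for all $x$, and $\overline{U'}$ is compact, so $\delta:=\inf_{x\in\overline{U'}}\lambda_d(x)>0$ and $\mathrm{sp}(a(x))\subseteq\{0\}\cup[\delta,\norm{a}]$ for every $x\in\overline{U'}$. Choosing a continuous $g\colon[0,\infty)\to[0,1]$ with $g\equiv 0$ on $[0,\delta/2]$ and $g\equiv 1$ on $[\delta,\infty)$, the element $q:=g(a|_{\overline{U'}})$ is a projection in $p'(\mathrm{C}(\overline{U'})\otimes\mathcal K)p'$ with $q(x)$ of rank $d$ for each $x\in\overline{U'}$; thus $q$ is the projection-valued section defining a rank-$d$ vector bundle $\xi$ over $\overline{U'}$.

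Finally I would identify $w|_{U'}$ with $\xi$. By the spectral gap one has, pointwise on $\overline{U'}$, the inequalities $\delta\,q(x)\le a(x)\le\norm{a}\,q(x)$, whence $q\sim\delta q\precsim a|_{\overline{U'}}\precsim\norm{a}\,q\sim q$ by the standard comparison facts recalled after the first definition; in particular $w|_{\overline{U'}}=[a|_{\overline{U'}}]=[q]$. Restricting once more to $U'$ gives $w|_{U'}=[q|_{U'}]$, i.e. $w|_{U'}$ is induced by $\xi$, as wanted.

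The only delicate point is the uniform spectral gap in the second step: having $\mathrm{rank}(a(x))=d$ constant on the compact set $\overline{U'}$ is exactly what forces the $d$-th eigenvalue to stay bounded away from $0$, so that a single continuous functional calculus produces a genuine (continuous) projection and hence a bona fide vector bundle; this is where the hypothesis $\overline{U'}\subseteq U$ is essential, and everything else is routine functional calculus and Cuntz comparison.
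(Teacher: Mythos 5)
Your proposal is correct and follows essentially the same route as the paper: the paper's proof simply takes a positive representative $a$ of $w$, notes that $\mathrm{rank}(a(x))=d$ on $U$, and declares $\mathrm{Im}(a(x))$ to be the desired bundle. Your spectral-gap and functional-calculus argument is exactly the justification (left implicit in the paper) that the family $x\mapsto\mathrm{Im}(a(x))$ is a continuous, hence locally trivial, rank-$d$ projection field over the compact set $\overline{U'}$, together with the routine check that $[a|_{U'}]=[q|_{U'}]$.
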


\begin{proof}
Let $a(x)\in \mathrm{M}_n({A^+})$ be a representative of $w$. Then one has that $\mathrm{supp}(a)=\overline{U}$, and for any $x\in U$, $\mathrm{rank}(a(x))=d$. Then, $\mathrm{Im}(a(x))$ is the desired vector bundle.
\end{proof}

\begin{defn}
An element $w$ in $W(A)$ is said to be type 0 if it satisfies \eqref{type0}, and $U$ is called an open support of $w$. 
\end{defn}

\begin{NN}
Let $w_1, ..., w_n$ be elements of type 0 in $W(A)$, and denoted by $U_1, ..., U_n$ the their open supports respectively. If $\{U_1, ..., U_n\}$ forms an open cover of $X$, then $w_1, ..., w_n$ induce a branched cover of $X$ in the natural way. 
\end{NN}

\begin{defn}[\cite{Vill-sr}]
A homomorphism: $\varphi: \textrm{C}(X)\otimes\mathcal K\to \mathrm{C}(Y)\otimes\mathcal K$ is called generized diagonal if there exist $k\in\mathbb N$ and maps $\lambda_1, ..., \lambda_k:  Y \to X$ and mutually orthogonal projections $p_1, ..., p_k$ in $\mathrm{C}(Y)\otimes\mathcal K$ such that $\varphi=(\mathrm{id}_{\mathrm{C}(X)}\otimes\vartheta)\circ(\tilde{\varphi}\otimes\mathrm{id_{\mathcal K}})$ where 
$$\tilde{\varphi}:\mathrm{C}(X)\to\mathrm{C}(Y)\otimes\mathcal{K}: \quad f\mapsto\sum_{i=1}^k(f\circ\lambda_i)p_i$$ 
and $\vartheta:\mathcal K\otimes\mathcal K\to\mathcal K$ is an isomorphism. In this case, one {says that} the map $\varphi$ comes from {the paires} $(\lambda_i, p_i)_{i=1}^k$.

\end{defn}

\begin{rem}
The homomorphisms in the form above are referred to as diagonal maps in \cite{Vill-sr}, which are different from the diagonal maps considered in this paper.
\end{rem}

\begin{NN}
Let $X_i$ be a sequence of compact Hausdorff spaces, and consider a sequence of diagonal maps $\varphi_i: \mathrm{C}(X_i)\otimes\mathcal K\to\mathrm{C}(X_{i+1})\otimes \mathcal K$. Let $q_1$ be a projection in $\mathrm{C}(X_1)\otimes\mathcal K$, and denote by $q_i=\varphi_{i-1}\circ\cdots\circ\varphi_{1}(q_1)$. Set 
$$A_i=q_i(\mathrm{C}(X_i)\otimes\mathcal K)q_i.$$ The the inductive limit $\varinjlim(A_i, \varphi_i)$ is called a unital AH-algebra with generalized diagonal maps. 

Let $\alpha$ be an open cover of $X_i$. For each $U\in \alpha$, consider any complex valued function $\phi_U$ with $\phi_U(x)\neq 0$ for all $x\in U$ and $\phi_U(x)=0$ for all $x\notin U$. Consider $[U]=[\phi_U \cdot q_i]$ in the Cuntz semigroup of $A_i$. This element is uniquely determined by $U$.

Consider the image $[\varphi_i]([U])$. Then,
$$[\varphi_i]([U])=\sum_{j=1}^{m_i}[(\phi_U\cdot\lambda_j)p_j\otimes q_i]=\sum_{j=1}^{m_i}[(\phi_{\lambda_j^{-1}(U)})p_j\otimes q_i].$$
\end{NN}

\begin{NN}
Let $\alpha$ be an open cover of $X_i$, and let $\varphi_i: A_i\to A_{i+1}$ be a generalized diagonal map. Assume that $X_{i+1}$ is connected and the map $\varphi_i$ is induced by $(\lambda_j, p_j)$. Then, the pair $(\lambda_j, p_j)$ induces a branched cover $$(\lambda, p_j)^{-1}(\alpha):=\{(\lambda^{-1}_j(U), p_j|_{\lambda^{-1}_j(U)});\ U\in\alpha\}$$ of $X_{i+1}$.

Consider the branched cover $$\tilde{\alpha}_{i+1}:=(\lambda_1, p_1)^{-1}(\alpha)\vee(\lambda_2, p_2)^{-1}(\alpha)\vee\cdots\vee(\lambda_{m_i}, p_{m_i})^{-1}(\alpha)$$ and the ordinary cover $${\alpha}_{i+1}:=\lambda_1^{-1}(\alpha)\vee \lambda_2^{-1}(\alpha)\vee\cdots\vee\lambda_{m_i}^{-1}(\alpha).$$

For any $\beta\succ {\alpha}_{i+1}$, consider the branched cover $\mathrm{Ind}_\beta^{\tilde{\alpha}_{i+1}}.$ Set $$n'_{i+1}(\beta):=\mathrm{mul}(\mathrm{Ind}_\beta^{\tilde{\alpha}_{i+1}}),$$ and $$r'_{i+1}(\alpha)=\inf\{\frac{\mathrm{ord}(\beta)}{n_{i+1}(\beta)};\ \beta\succ \alpha_{i+1}\}.$$

Note that $r'_{i+1}$ depends on the pair $(\lambda_j, p_j)$, but the map $\varphi_i$ may be induced by different pairs. Hence one defines $$r_{i+1}(\alpha)=\inf\{r'_{i+1};\ (\lambda_j, p_j)\ \textrm{induces}\ \varphi_i\}.$$

If $X_{i+1}$ has more than one connected component, say, $X_{i+1}^{(1)}, ..., X_{i+1}^{(k)}$, then define $$r_{i+1}(\alpha)=\max\{r_{i+1}(\alpha)\ \textrm{for the restriction of $\varphi_{i}$ to $X_{i+1}^{l}$};\ 1\leq l\leq k \}.$$
\end{NN}

\begin{rem}
Although it is possible to define $r_{i+1}(\alpha)$ without considering connected components of base spaces (direct sum of generalized diagonal maps is a general diagonal map by itself), it would be overkill to compare the overall order of $\beta$ on $X_{j+1}$ to the overall multiplicity of $\beta$. In other words, the order of $\beta$ might be obtained on one connected component, but the multiplicity might be achieved on another.
\end{rem}

\begin{defn}\label{cmdim}
Let $A$ be an AH-algebra with generalized diagonal maps.  Set $$\gamma_i(A):=\sup_{\alpha\in\mathcal{C}(X_i)}\liminf_{j\geq i} r_j(\alpha).$$ The Cuntz mean dimension of $A$ is defined by $$\gamma_\mathrm{c}(A)=\lim_{i\to\infty} \gamma_i(A).$$
\end{defn}

\begin{rem}\label{degenerat}
For an AH-algebra with generalized diagonal maps, if for each diagonal map $\varphi$, all $p_j$ are Murray-von Neumann equivalent, (in particular, if the connecting map $\varphi_i$ is diagonal), then $n_{j+1}(\beta)=\mathrm{rank}(q_{i+1})$ for any $\beta\succ\alpha_{i+1}$, and therefore $$\gamma_i=\frac{\mathcal D(\alpha_{i+1})}{\mathrm{rank}(q_{i+1})}.$$ Hence $\gamma_\mathrm{c}(A)=\gamma(A)$.
\end{rem}

\begin{defn}\label{sep-defn}
Let $X$ and $\Delta$ be two compact Hausdorff spaces. Let $S$ be a unital sub-C*-algebra of $A=p(\mathrm{C}(X)\otimes\mathcal K)p$ and let $\xi: X\to \Delta$ be a continuous map. Then $S$ is said to separate $\Delta$ if 
\begin{enumerate}
\item \label{sep-cond1} for any $y\in \Delta$ and any open set $U\subset \Delta$ containing $y$, there exists $f\in S\cap A'$ such that $f|_{\xi^{-1}(x)}\neq 0$ and $f|_{\xi^{-1}(U^c)}=0$;
\item \label{sep-cond2} for any $x_1$ and $x_2$ in $X$ with $\xi(x_1)=\xi(x_2)$, the representation $\pi_{x_1}|_S$ is unitarily equivalent to $\pi_{x_2}|_S$.
\end{enumerate}
\end{defn}

\begin{lem}\label{sep}
With the notation as above, one then has that for any subset $D\subseteq X$ with a closed image and $x\in X$ with $\xi(x)\notin\xi(D)$, there exists $f\in S\cap A'$ such that $f(x)\neq 0$ but $f|_D=0$.
\end{lem}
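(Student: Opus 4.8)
The plan is to derive the conclusion directly from the two clauses of Definition~\ref{sep-defn}. First I would produce the open set to feed into clause~\ref{sep-cond1}: since $\xi(D)$ is closed in $\Delta$ and $\xi(x)\notin\xi(D)$, the set $U:=\Delta\setminus\xi(D)$ is open, contains $\xi(x)$, and satisfies $U^{c}=\xi(D)$. No appeal to normality of $\Delta$ is needed here.

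Next, apply clause~\ref{sep-cond1} of Definition~\ref{sep-defn} with $y=\xi(x)$ and this $U$. This yields an element $f\in S\cap A'$ with $f|_{\xi^{-1}(\xi(x))}\neq 0$ and $f|_{\xi^{-1}(U^{c})}=0$; that is, $f$ vanishes on $\xi^{-1}(\xi(D))$. Since $D\subseteq\xi^{-1}(\xi(D))$, this gives $f|_{D}=0$ at once.

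It remains to check that $f(x)\neq 0$, since clause~\ref{sep-cond1} only guarantees that $f$ does not vanish identically on the fibre $\xi^{-1}(\xi(x))$, not that it is nonzero at the specific point $x$. Here I would invoke clause~\ref{sep-cond2}: choose $x'\in\xi^{-1}(\xi(x))$ with $f(x')\neq 0$; since $\xi(x')=\xi(x)$, the evaluation representations $\pi_{x'}|_{S}$ and $\pi_{x}|_{S}$ are unitarily equivalent, and as $f\in S$ this forces $f(x)=\pi_{x}(f)$ to be unitarily equivalent to $\pi_{x'}(f)=f(x')$, hence nonzero. This completes the argument.

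I do not expect a genuine obstacle. The only points requiring a little care are that the element $f$ furnished by clause~\ref{sep-cond1} really lies in $S$ (so that clause~\ref{sep-cond2} may be applied to it), and that unitary equivalence of representations preserves the property of sending $f$ to a nonzero operator --- both of which are immediate from the definitions.
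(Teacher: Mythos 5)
Your proposal is correct and follows essentially the same route as the paper: take an open set $U$ containing $\xi(x)$ and disjoint from the closed set $\xi(D)$ (you take $U=\Delta\setminus\xi(D)$), apply clause (1) of Definition \ref{sep-defn} to get $f\in S\cap A'$ vanishing on $\xi^{-1}(U^c)\supseteq D$ and not vanishing on the fibre $\xi^{-1}(\xi(x))$. Your additional step --- using clause (2) to upgrade ``$f$ is nonzero somewhere on the fibre over $\xi(x)$'' to ``$f(x)\neq 0$ at the specific point $x$'' --- is a detail the paper's two-line proof leaves implicit, and it is a legitimate and welcome refinement.
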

\begin{proof}
Set $y=\xi(x)$. Since $\xi(x)\notin \xi(D)$ and $\xi(D)$ is closed, there is an open set $U\subseteq \Delta$ containing $y$, and $f\in S\cap A'$ such that $U\cap\xi(D)=\textrm{\O}$, $f|_{\xi^{-1}(y)}\neq 0$ and $f|_{\xi^{-1}(U^c)}=0$. Note that $D\subseteq \xi^{-1}(U^c)$ and the lemma follows.
\end{proof}

\begin{lem}\label{subset-dim}
Let $X$ be a locally compact second-countable Hausdorff space. If $\mathrm{dim}(C)\leq n$ for any compact subset $C\subseteq X$, then $\mathrm{dim}(C)\leq n+1$.
\end{lem}
\begin{proof}
Denote by $\tilde{X}=X\cup\{\infty\}$ the one-point compactification of $X$. Note that $X$ and $\tilde{X}$ are metrizable. Let us show that $\mathrm{dim}(\tilde{X})\leq n+1$. Then, by Corollary 3.1.20 of \cite{eng-dim}, one has that $\mathrm{dim}(X)\leq n+1$.

Let $\alpha=\{U_1, ..., U_k\}$ be a finite open cover of $\tilde{X}$. Assume that $U_1=\{\infty\}\cup V$ with $V\subseteq X$ and $X\setminus V$ compact. Then $\{U_2, ..., U_k\}$ is a cover of $X\setminus V$. Since $\mathrm{dim}(C)\leq n$ {for any compact subset $C\subseteq X$, one has that $\mathrm{dim}(X\setminus V)\leq n$, and hence there is an open cover $\beta$ of $X\setminus V$} with $\mathrm{ord}(\beta)\leq n+1$ and $\beta$ refines $\{U_2, ..., U_k\}$. Then $\{U_1\}\cup\beta$ is an open cover of $X$ with order at most $n+2$ which also refines $\alpha$. Hence $\mathrm{dim}(\tilde{X})\leq n+1$, as desired.
\end{proof}

\begin{lem}\label{dim-base}
Let $X$ and $\Delta$ be two compact Hausdorff spaces. Let $S$ be a unital sub-C*-algebra of $A=p(\mathrm{C}(X)\otimes\mathcal K)p$ and let $\xi: X\to \Delta$ be a continuous map. Suppose that $S$ separates $\Delta$. Then $S$ has a recursive subhomogeneous C*-algebra decomposition with topological dimension at most $\mathrm{dim}(\Delta)+1$.
\end{lem}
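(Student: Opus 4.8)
The plan is to exhibit $S$ as a $\mathrm{C}(Y)$-algebra, where $Y:=\xi(X)$, and to read the recursive subhomogeneous structure off the way $S$ sits over $Y$. Since $X$ is compact and $\xi$ continuous, $Y$ is a closed subset of $\Delta$, so $\mathrm{dim}(Y)\leq\mathrm{dim}(\Delta)$; it therefore suffices to produce a recursive subhomogeneous decomposition of $S$ all of whose base spaces have dimension at most $\mathrm{dim}(Y)$. Put $N=\mathrm{rank}(p)$ and, for $x\in X$, write $\pi_x\colon A\to p(x)\mathcal{K}p(x)\cong\mathrm{M}_N(\mathbb{C})$ for evaluation at $x$. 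By the second condition of Definition \ref{sep-defn}, $\pi_x|_S$ and $\pi_{x'}|_S$ are unitarily equivalent whenever $\xi(x)=\xi(x')$, so there is a representation $\sigma_y$ of $S$, well defined up to unitary equivalence, with $\sigma_y\cong\pi_x|_S$ for all $x\in\xi^{-1}(y)$. Next, any $f\in S\cap A'$ lies in $A'\cap A=Z(A)\cong\mathrm{C}(X)$, say $f=gp$; by the same condition the scalars $g(x)$, $x\in\xi^{-1}(y)$, are pairwise unitarily conjugate, hence equal, so $g$ is constant on the fibres of $\xi$ and $f=(h\circ\xi)p$ for a unique continuous $h$ on $Y$. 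Conversely the elements furnished by the first condition of Definition \ref{sep-defn} separate the points of $Y$ and, together with $1_S=p$, all lie in the closed self-adjoint subalgebra $\{h\in\mathrm{C}(Y):(h\circ\xi)p\in S\}$ of $\mathrm{C}(Y)$; by Stone--Weierstrass that subalgebra is all of $\mathrm{C}(Y)$. Hence $S\cap A'\cong\mathrm{C}(Y)$, and this copy of $\mathrm{C}(Y)$ sits in the centre of $S$, so $S$ is a unital $\mathrm{C}(Y)$-algebra.

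Then I would record the two properties that make this decomposition useful. First, each fibre $S_y$ is finite-dimensional: $\sigma_y$ kills the ideal generated by $\{h\in\mathrm{C}(Y):h(y)=0\}$, so it factors through $S_y$, and the irreducible representations of $S$ lying over $y$ in the $\mathrm{C}(Y)$-algebra sense are precisely those occurring in some $\pi_x|_S$ with $\xi(x)=y$, i.e.\ in $\sigma_y$; thus there are at most $N$ of them, each of dimension at most $N$, $\sigma_y$ is faithful on $S_y$, and $S_y\cong\sigma_y(S)\subseteq\mathrm{M}_N(\mathbb{C})$. Second, the $\mathrm{C}(Y)$-algebra is continuous: for $s\in S$ the function $y\mapsto\norm{s+\ker\sigma_y}=\norm{s(x)}$ (any $x\in\xi^{-1}(y)$) is the factorization through the quotient map $\xi\colon X\to Y$ of the continuous function $x\mapsto\norm{s(x)}$ on $X$, which is constant on $\xi$-fibres. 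Thus $S$ is a continuous $\mathrm{C}(Y)$-algebra with finite-dimensional fibres --- a continuous field of finite-dimensional C*-algebras over the compact space $Y$, with $\mathrm{dim}(Y)\leq\mathrm{dim}(\Delta)$.

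To conclude I would peel off matrix strata. For each finite-dimensional C*-algebra type $T$, the set of $y\in Y$ with $S_y\cong T$ is locally closed in $Y$; over (the closure of) each such stratum the continuous field is a locally trivial bundle of a fixed finite-dimensional algebra, and since $\mathrm{dim}(Y)<\infty$ such a bundle is trivial over each member of a finite closed cover of the stratum by subsets of $Y$. Assembling these trivial pieces by iterated pullbacks, the new summands being matrix algebras over closed subsets of $Y$ glued along boundaries, produces a recursive subhomogeneous decomposition of $S$; Lemma \ref{sep} is exactly what guarantees that the restrictions of $S$ occurring in this recursion still separate the corresponding quotient spaces, so the hypotheses reproduce themselves at each stage. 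Every base space arising this way is a closed subset of $Y$, hence of dimension at most $\mathrm{dim}(\Delta)$, which is the assertion.

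The steps up to ``$S$ is a continuous $\mathrm{C}(Y)$-algebra with finite-dimensional fibres'' are essentially formal consequences of the two conditions of Definition \ref{sep-defn}; the substantive work is the last step, turning that continuous field into a recursive subhomogeneous pullback diagram without enlarging the base spaces past $Y$. The delicate points are that the fibre $S_y$ genuinely jumps (for instance $\mathrm{M}_k\oplus\mathrm{M}_k$ can degenerate to $\mathrm{M}_k$ or to $\mathrm{M}_{2k}$ along a limit), so there is no global local triviality and the stratification must be done by hand; that the stratification, the trivialising cover of each stratum, and the gluing must all be carried out with finitely many closed subsets of $Y$, which is where finiteness of $\mathrm{dim}(Y)$ is used; and that one must check the separation hypotheses survive the restrictions used at each stage, which is the role of Lemma \ref{sep}. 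Alternatively, once the central copy of $\mathrm{C}(Y)$ and the finite-dimensionality of the fibres are in hand, one may invoke the general structure theory of recursive subhomogeneous C*-algebras to organise the patching, controlling the dimensions of the base spaces through their continuous finite-to-one maps to $Y$.
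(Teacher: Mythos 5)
Your first two paragraphs are correct and amount to a clean repackaging of the setup: condition (2) of Definition \ref{sep-defn} makes central elements of $S$ constant on $\xi$-fibres, condition (1) plus Stone--Weierstrass gives $S\cap A'\cong\mathrm{C}(Y)$ with $Y=\xi(X)$, and the fibres of the resulting $\mathrm{C}(Y)$-algebra are finite-dimensional. The paper does essentially the same bookkeeping, but on the dual side: it defines a map $F:\mathrm{Prim}(S)\to\Delta$ sending an irreducible $\sigma$ to $\xi(x)$ for any $x$ with $\sigma\leq\pi_x|_S$, and uses Lemma \ref{sep} to show $F$ is well defined and continuous with finite fibres. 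The real content of the lemma, however, is the passage from this picture to a recursive subhomogeneous decomposition of the stated topological dimension, and that is where your argument has a genuine gap.

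Your hands-on stratification does not work as written. The assertion that the field is a locally trivial bundle ``over (the closure of) each such stratum'' is false: the fibre jumps exactly at the boundary of a stratum (you note this yourself two sentences later), so there is no local triviality over the closure, and the inductive gluing is never actually performed. Moreover the $\mathrm{M}_n$-summands of any such decomposition do not live over closed subsets of $Y$ but over spaces (pieces of $\mathrm{Prim}_n(S)$) that map finite-to-one onto subsets of $Y$ -- a point $y$ can carry several inequivalent $n$-dimensional irreducibles -- so bounding their dimension by $\mathrm{dim}(\Delta)$ is itself a nontrivial dimension-theoretic step. Your fallback, ``invoke the general structure theory,'' is an appeal to Theorem 2.16 of \cite{Phill-RSA1}, but the hypothesis of that theorem is precisely a bound on $\mathrm{dim}(\mathrm{Prim}_n(S))$ for each $n$, which you never establish: you only bound $\mathrm{dim}(Y)$. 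The paper's proof consists exactly of supplying this missing step -- it shows $F$ restricted to $\mathrm{Prim}_n(S)$ is a continuous map with finite fibres and then cites the dimension-lowering theorems of \cite{eng-dim} to get $\mathrm{dim}(\mathrm{Prim}_n(S))\leq\mathrm{dim}(\Delta)$, after which Phillips' theorem delivers the decomposition. To complete your argument you would need to prove that inequality (or re-prove Phillips' theorem in this special case), neither of which your sketch does.
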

\begin{proof}
Denote by $\mathrm{Prim}(S)$ the space of equivalent classes of the irreducible representations of $S$, and denote by $\mathrm{Prim}_n(S)$ the subspace of the {irreducible representations which have dimension $n$}. Let us first show that {$\mathrm{dim}(\mathrm{Prim}_n(S))\leq\mathrm{dim}(\Delta)+1$} for all $n$. 

For any $\sigma\in\mathrm{Prim}(S)$, there exists $x\in X$ such that $\sigma$ is a direct summand of the evaluation $\pi_x$. (Note that there might be more than one $x$ such that $\pi_x$ contains $\sigma$.) Define the map 
$$F: \textrm{Prim}(S)\ni\sigma\mapsto \xi(x)\in \Delta.$$
(Since $S$ is of type I, one can identify the space of irreducible representations with the space of primitive ideals.)

Assert that $F$ does not depend on the choice of $x$. In fact, let $x_1, x_2\in X$ such that both $\pi_{x_1}|_S$ and $\pi_{x_2}|_S$ contain $\sigma$. If $\xi(x_1)\neq\xi(x_2)$, since $S$ separates $\Delta$, by Lemma \ref{sep}, there exists $f\in S\cap A'$ such that $\pi_{x_1}(f)\neq 0$ but $\pi_{x_2}(f)=0$. In particular, $\pi_{x_1}|_S$ and $\pi_{x_2}|_S$ cannot have a common factor, and thus $\xi(x_1)=\xi(x_2)$.

Note that any pre-image $E$ of a closed subset $D'\subseteq\Delta$ under $F$ has the following form: 
$$E=\{\sigma;\ \textrm{$\sigma$ is a direct summand of $\pi_x|_S$ for some $x\in D$}\},$$
where $D=\xi^{-1}(D')$.

Let us show that $E$ is closed. Denote by $$I(E):=\{f\in S;\ \sigma(f)=0,\ \forall\sigma\in E\}.$$ Let $\sigma'$ be a irreducible representation of $S$ such that $\sigma'(I(E))=\{0\}$. There is $x'\in X$ such that $\sigma'$ is a direct summand of $\pi_{x'}|_S$. If $x'\notin D=\xi^{-1}(D')$, then $\xi(x')\notin D'=\xi(D)$. By Lemma \ref{sep}, there exists $f\in S\cap A'$ such that $f(x')\neq 0$ and $f|_D=0$. Therefore, $f\in I(E)$, but $\sigma'(f)\neq 0$, and this contradicts  the assumption. Thus $x'\in D$, $\sigma'\in E$, and $E$ is a closed set.

Hence the map $F$ is continuous. 

For any $y\in \Delta$, consider the pre-image $F^{-1}(\{y\})$. Let $x\in X$ such that $\xi(x)=y$. Since $S$ separates $\Delta$, by \eqref{sep-cond2} of Definition \ref{sep-defn}, the set $F^{-1}(\{y\})$ agrees with the set of direct summands of $\pi_x|_S$, and therefore $F^{-1}(\{y\})$ only has finitely many elements. 

Note that $\textrm{Prim}_n(S)$ is locally compact, Hausdorff, and second-countable, and hence is metrizable. Let $C$ be any compact subset of $\textrm{Prim}_n(S)$. Then the restriction of $F$ to $C$ is a closed mapping. By Theorem 4.3.9,  Theorem 4.1.5, and Corollary 3.1.20 of \cite{eng-dim}, one has that 
$$\mathrm{dim}(C)\leq\mathrm{dim}(F(C))\leq\mathrm{dim}(\Delta).$$
By Lemma \ref{subset-dim}, one has that $\mathrm{dim}(\mathrm{Prim}_n(S))\leq\mathrm{dim}(\Delta)+1$

Thus, $S$ is a unital subhomogeneuous C*-algebra with $\mathrm{dim}(\mathrm{Prim}_n(S))\leq\mathrm{dim}(\Delta)+1$ for all $1\leq n\leq N$. By Theorem 2.16 of \cite{Phill-RSA1}, $S$ has a recursive subhomogeneous C*-algebra decomposition with topological dimension at most $\mathrm{dim}(\Delta)+1$, as desired.
\end{proof}

\begin{defn}
Let $\tilde{\alpha}$ be a branched cover of $X$. A family of projections $$\{p^{(i)}_{(U, \kappa_U)};\ (U, \kappa_U)\in\tilde{\alpha}, 1\leq i\leq M_{(U, \kappa_U)}\}$$ is compatible to $\tilde{\alpha}$ if 
\begin{enumerate}
\item each $p^{(i)}_{(U, \kappa_U)}$ is a projection in $\mathrm{C}(\overline{U})\otimes\mathcal K$;
\item for each $U\in\alpha$, the projections $\{p^{(i)}_{(U, \kappa)}; (U, \kappa)\in\tilde{\alpha}, 1\leq i\leq M_{(U, \kappa_U)}\}$ are mutually orthogonal;
\item $[p^{(i)}_{(U, \kappa_U)}]=\kappa_U$ for all $1\leq i\leq M_{(U, \kappa_U)}$;
\item\label{coherence} for any $U\cap V\neq\O$, the restrictions of $\{p^{(i)}_{(U, \kappa_U)};\ \kappa, i\}$ to $U\cap V$ are one-to-one corresponding to the restrictions of $\{p^{(i)}_{(V, \kappa_V)};\ \kappa, i\}$ to $U\cap V$.

\end{enumerate}
\end{defn}

\begin{lem}\label{approx-rsa}
Let $\tilde{\alpha}$ be a branched cover of $X$, and let $\{p_{(U, \kappa_U)}\}$ be a family of projections which is compatible with $\tilde{\alpha}$. Consider the homogeneous C*-algebra $A=p(\mathrm{C}(X)\otimes\mathcal K)p$, where $p$ is a projection in $\mathrm{C}(X)\otimes\mathcal K$. Assume that for each $U\in\alpha$, one has $\sum_{(U, \kappa)\in\tilde{\alpha}}p_{(U, \kappa)}=p|_U$. 

Let $\{\phi_i\}$ is a partition of identity which is subordinate to $\alpha$, then there is a sub-C*-algebra $S\subseteq A$ such that $$\{\phi_U\cdot p_{(U, \kappa_U)};\ (U, \kappa_U)\in\tilde{\alpha}\}\subseteq S,$$ and $S$ has a recursive subhomogeneous decomposition of topological dimension at most $\mathrm{ord}(\alpha)+1$, and the dimensions of the irreducible representations of $S$ are at least $\mathrm{mul}(\tilde{\alpha})$.
\end{lem}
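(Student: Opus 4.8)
The plan is to realize $S$ as a subalgebra of $A$ whose primitive ideal space is governed by a polyhedron of dimension $\mathrm{ord}(\alpha)$, then to quote Lemma \ref{dim-base} for the recursive subhomogeneous decomposition; the bound on the dimensions of the irreducible representations will be read off directly from the generators of $S$. Concretely, I would let $N$ be the nerve of $\alpha$ --- vertices the members $U\in\alpha$, with $\{U_0,\dots,U_s\}$ spanning a simplex exactly when $U_0\cap\cdots\cap U_s\neq\varnothing$, so $\mathrm{dim}(N)=\mathrm{ord}(\alpha)$ --- form the partition-of-identity map $\xi\colon X\to N$, $\xi(x)=\sum_{U}\phi_U(x)[U]$, and take $\Delta=\xi(X)$, a compact subset of $N$ with $\mathrm{dim}(\Delta)\le\mathrm{ord}(\alpha)$. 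The $U$-th barycentric coordinate of $N$ pulls back under $\xi$ to $\phi_U$, so $\xi^{*}(\mathrm C(\Delta))$ is the unital C*-subalgebra of $\mathrm C(X)$ generated by $\{\phi_U\}$; and $\xi(x_1)=\xi(x_2)$ forces $\phi_U(x_1)=\phi_U(x_2)$ for all $U$ and a common carrier simplex $\sigma=\{U:\phi_U(x_1)>0\}$.

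Next I would build $S$. For each $U$ write $\mathcal K_U=\mathcal E_U(1)\sqcup\cdots\sqcup\mathcal E_U(n_U)$ for the partition by Murray--von Neumann equivalence over $\overline U$, put $e^{U,i}=\sum_{\kappa\in\mathcal E_U(i)}p_{(U,\kappa)}$ (so $\sum_i e^{U,i}=p|_U$), and for each block fix a finite family $\mathcal W_{U,i}\subseteq e^{U,i}(\mathrm C(\overline U)\otimes\mathcal K)e^{U,i}$ consisting of the $p_{(U,\kappa)}$ with $\kappa\in\mathcal E_U(i)$, partial isometries implementing the Murray--von Neumann equivalences among them, and matrix units inside each $p_{(U,\kappa)}$, arranged so that at every point of $\overline U$ the set $\mathcal W_{U,i}$ generates the full matrix algebra $\mathrm M_{|\mathcal E_U(i)|\cdot\mathrm{rank}(\mathcal E_U(i))}$ on the fibre of $e^{U,i}$, and so that these systems are locally constant in fixed trivialisations. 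Since $\mathrm{supp}(\phi_U)\subseteq U$, each product $\phi_U a$ with $a\in\mathcal W_{U,i}$ extends by zero to an element of $A$, as does $\xi^{*}(g)\,p$ for $g\in\mathrm C(\Delta)$. I would then let $S$ be the C*-subalgebra of $A$ generated by all $\phi_U a$ and all $\xi^{*}(g)\,p$. Because $\sum_U\phi_U=1$ and $\sum_\kappa p_{(U,\kappa)}=p|_U$, the unit $p=\sum_{U,\kappa}\phi_U p_{(U,\kappa)}$ lies in $S$, and by construction $\phi_U p_{(U,\kappa_U)}\in S$ for every $(U,\kappa_U)\in\tilde\alpha$.

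Then I would verify that $S$ separates $\Delta$ in the sense of Definition \ref{sep-defn} and invoke Lemma \ref{dim-base}. Condition \eqref{sep-cond1} is routine: $\xi^{*}(\mathrm C(\Delta))\,p\subseteq S\cap A'$, and for $y\in\Delta$ and an open neighbourhood $V$ a Urysohn function $g$ on $\Delta$ with $g(y)=1$, $g|_{V^{c}}=0$ supplies the required central bump $f=\xi^{*}(g)\,p$ (compare Lemma \ref{sep}). Condition \eqref{sep-cond2} --- unitary equivalence of $\pi_{x_1}|_S$ and $\pi_{x_2}|_S$ when $\xi(x_1)=\xi(x_2)$ --- is the crux. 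At such a pair only the generators with $U$ in the common carrier simplex $\sigma$ survive, and since $\phi_U(x_1)=\phi_U(x_2)$ the task reduces to producing a single unitary in $\mathrm M_{\mathrm{rank}(p)}$ intertwining, for every $U\in\sigma$ simultaneously, the configuration of projections, linking isometries and matrix units determined by $\mathcal W_{U,i}$ at $x_1$ with that at $x_2$. For a fixed $U$ the flatness of $\mathcal W_{U,i}$ makes the two configurations canonically identified (each presents $\bigoplus_i\mathrm M_{|\mathcal E_U(i)|\cdot\mathrm{rank}(\mathcal E_U(i))}$ with minimal central projections $e^{U,i}$ summing to $p$); the delicate step is to choose the trivialisations --- and, if necessary, to pass beforehand to a refinement of $\alpha$ over whose members the bundles in play are trivial, taking care that this does not worsen the bound $\mathrm{ord}(\alpha)$ --- so that these identifications for the finitely many $U\in\sigma$ are realized by one and the same unitary. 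Granting \eqref{sep-cond2}, Lemma \ref{dim-base} gives $S$ a recursive subhomogeneous decomposition of topological dimension at most $\mathrm{dim}(\Delta)\le\mathrm{ord}(\alpha)$.

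Finally, the multiplicity bound. Every irreducible representation of $S$ is a direct summand of some $\pi_x|_S$ (as in the proof of Lemma \ref{dim-base}), so it suffices to bound the dimensions of the irreducible summands of $\pi_x|_S$ for each $x\in X$. Choose $U$ with $\phi_U(x)>0$; then $\bigoplus_i\mathrm M_{|\mathcal E_U(i)|\cdot\mathrm{rank}(\mathcal E_U(i))}\subseteq\pi_x(S)\subseteq\mathrm M_{\mathrm{rank}(p)}$ with unit $\sum_i e^{U,i}(x)=p(x)=1_S$, so any irreducible summand $\rho$ of $\pi_x|_S$ is nonzero on some $\mathrm M_{|\mathcal E_U(i)|\cdot\mathrm{rank}(\mathcal E_U(i))}$ and hence has dimension at least $|\mathcal E_U(i)|\cdot\mathrm{rank}(\mathcal E_U(i))\ge\mathrm{mul}(U)\ge\mathrm{mul}(\tilde\alpha)$. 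The main obstacle is, as indicated, Definition \ref{sep-defn}\eqref{sep-cond2}: reconciling the need for a flat, full matrix-unit system inside each block (to push the representation dimensions up to $\mathrm{mul}(\tilde\alpha)$) with the requirement that the generators at $\xi$-identified points be jointly unitarily equivalent (to keep the primitive ideal space of dimension $\mathrm{ord}(\alpha)$).
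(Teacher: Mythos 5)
Your architecture is the same as the paper's: the nerve $\Delta$ of $\alpha$, the map $\xi(x)=\sum_U\phi_U(x)[U]$, a subalgebra $S$ generated by partition-of-unity--weighted matrix units supported in the members of $\alpha$, verification that $S$ separates $\Delta$ in the sense of Definition \ref{sep-defn}, and an appeal to Lemma \ref{dim-base}. The genuine divergence, and the first gap, is in the choice of generators. Your families $\mathcal{W}_{U,i}$ include matrix units \emph{inside} each $p_{(U,\kappa)}$, arranged to generate the full matrix algebra $\mathrm{M}_{|\mathcal{E}_U(i)|\cdot\mathrm{rank}(\mathcal{E}_U(i))}$ fibrewise and to be ``locally constant in fixed trivialisations.'' Such a system exists only if each $p_{(U,\kappa)}$ is a trivial bundle over $\overline{U}$, which is not among the hypotheses, and your proposed remedy --- refine $\alpha$ until the bundles trivialise --- is not compatible with the statement: a refinement can strictly increase $\mathrm{ord}(\alpha)$, and it replaces the given partition of unity, so the prescribed elements $\phi_U\cdot p_{(U,\kappa_U)}$ of the \emph{original} cover are no longer visibly in $S$. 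The paper sidesteps this by taking as generators only the $m_k\times m_k$ system of matrix units $(e^k_U)_{i,j}$ whose \emph{diagonal} entries are the projections $p_{(U,\kappa_i)}$ themselves; such a system exists over $\overline{U}$ with no triviality assumption, coming directly from the Murray--von Neumann equivalences that define the class $\mathcal{E}_U(k)$. (Your instinct is not unmotivated: with only those generators the fibre algebra produced by a single class is a copy of $\mathrm{M}_{|\mathcal{E}_U(k)|}$ acting with multiplicity $\mathrm{rank}(\mathcal{E}_U(k))$, so one has to be careful about which ``dimension'' of the irreducible summands is being bounded below by $\mathrm{mul}(\tilde\alpha)$ --- but you cannot purchase the larger matrix blocks at the price of a trivialisation you do not have.)

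The second gap is condition \eqref{sep-cond2} of Definition \ref{sep-defn}: producing one unitary that intertwines $\pi_{x_1}|_S$ and $\pi_{x_2}|_S$ simultaneously for every $U$ in the common carrier simplex whenever $\xi(x_1)=\xi(x_2)$. This is the step on which the bound $\mathrm{dim}\leq\mathrm{ord}(\alpha)$ via Lemma \ref{dim-base} actually rests, and you explicitly leave it open (``the delicate step is to choose the trivialisations\dots''). A complete proof must supply it; for the paper's generators it amounts to showing that two finite families of matrix-unit systems in $\mathrm{M}_{\mathrm{rank}(p)}$, indexed by the $U$ in the carrier and having matching ranks and multiplicities, are conjugate by a single unitary. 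As written, the proposal identifies the correct skeleton and honestly names the two hard points, but resolves neither, so it does not yet constitute a proof.
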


\begin{proof}
Consider a simplicial complex $\Delta$ as the following: The vertices of $\Delta$ consists of elements of $\alpha$. The $s$-dimensional simplices correspond to all $U_1, ..., U_s$ with $\bigcap_{i=1}^s U_i\neq\textrm{\O}$. Let $\{\phi_U; U\in\alpha\}$ be a partition of unit which is subordinate to $\alpha$.  Define the map $\xi: X\to \Delta$ by $$x\mapsto\sum_{U\in\alpha}\phi_U(x)[U].$$ Note that $\mathrm{dim}(\Delta)=\mathrm{ord}(\alpha)$.

For any $U\in\alpha$, consider $\mathcal K_U=\mathcal E_U(1)\cup\cdots\cup\mathcal E_U(n_U)$. Write $$\mathcal E_U(k)=\{\kappa_1, ..., \kappa_{m_k}\}$$ where $1\leq k\leq n_U$. Since $\kappa_i$, $i=1,..., m_k$ are mutually equivalent, there exists a system of matrix units $\{(e^k_U)_{i, j}; 1\leq i, j\leq m_k\}\subseteq A|_U$ such that $[(e^k_U)_{i,i}]=\kappa_i$, $(e^k_U)_{i,i}=p_{(U, \kappa_i)}$. Note that $(e_U^k)_{i, j}\cdot\phi_U\in A$.

Consider the elements $$\mathcal G_{\tilde{\alpha}}:=\{(e^k_U)_{i, j}\cdot\phi_U;\ U\in\alpha, 1\leq k\leq n_U, 1\leq i, j\leq m_k\},$$ and the sub-C*-algebra $$S:=\textrm{C*}(\mathcal G_{\tilde{\alpha}})\subseteq A.$$ Note that for any $(U, \kappa_U)\in\tilde{\alpha}$, 
$$\phi_U\cdot p_{(U, \kappa_U)}=\phi_U\cdot (e_U^k)_{i, i}   \in S.$$

Then, $S$ is a subhomogeneous C*-algebra. For any $x\in X$, the restriction of the representation $\pi_x$ to $S$ has a decomposition 
\begin{equation}\label{lowbd-irr}
\pi_x(S)=\bigoplus_{i}\mathrm{M}_{n_i}(\Comp),
\end{equation} 
with $n_i\geq \mathrm{mul}(\tilde{\alpha})$. 

Let us show that $S$ separates $\Delta$. Note that for each $U\in\alpha$, 
\begin{equation}\label{eqn-cathy}
\phi_U\cdot p=\phi_U\cdot p|_U=\phi_U\cdot (\sum_{(U, \kappa_U)\in\tilde{\alpha}} p_{(U, \kappa_U)})\in S,
\end{equation}
and it is in the centre of $A$. Note that the map $\xi$ induces a homomorphism $$\Xi: \textrm{C}(\Delta)\to\textrm{C}(X)\cong A'.$$ By considering the image of $X$ in $\Delta$, we may assume that $\Xi$ is injective. By \eqref{eqn-cathy}, one has that $\phi_U\cdot p\in\Xi( \textrm{C}(\Delta))$ for any $U\in \alpha$. If $\xi(x_1)\neq\xi(x_2)$ for some $x_1, x_2\in X$, then there exists $\phi_U$ such that $\phi_U(x_1)\neq\phi_U(x_2)$. Therefore, the elements $\{\phi_U\cdot p\}$ separate points of the algebra $\Xi(\textrm{C}(\Delta))$. By the Stone-Weierstrass Theorem, the elements $\{\phi_U\cdot p\}$ generate $\Xi(\textrm{C}(\Delta))$, that is, $S$ contains $\Xi(\textrm{C}(\Delta))$. Therefore, for any $y\in\Delta$ and {any open ball $V\subseteq \Delta$} containing $y$, there exists $f\in S\cap A'$ such that $f|_{\xi^{-1}(y)}=1$ and $f|_{\xi^{-1}(V^c)}=0$. This show the Condition \eqref{sep-cond1} of Definition \ref{sep-defn}.

For \eqref{sep-cond2} of Definition \ref{sep-defn}, if $\xi(x_1)=\xi(x_2)$ for some $x_1, x_2\in X$, then, $\phi_U(x_1)=\phi_U(x_2)$ for all $U\in\alpha$. By Condition \eqref{coherence},  the representation $\pi_{x_1}|_S$ is unitarily equivalent to the representation $\pi_{x_2}|_S$. 

Thus, the subhomogeneous C*-algebra $S$ separates $\Delta$. By Lemma \ref{dim-base}, the C*-algebra $S$ has a recursive subhomogeneous decomposition with topological dimension at most $\mathrm{ord}(\alpha)+1$. Moreover, by \eqref{lowbd-irr}, the irreducible representations of $S$ has dimension at least $\mathrm{mul}(\tilde{\alpha})$.
\end{proof}

\begin{thm}\label{Cuntz-approx}
Let $A$ be a simple AH-algebra with generalized diagonal maps. Then $A$ can be locally approximated by C*-algebras with supreme of their comparison radii at most $\gamma_{\mathrm c}(A)/2$.
\end{thm}
\begin{proof}
One has to show that for any finite subset $\mathcal F\subseteq A$ and any $\eps>0$, there exists a sub-C*-algebra $S\subseteq A$ such that $\mathcal F\subseteq_\eps S$, and $\mathrm{rc}(S)\leq \gamma_{\mathrm c}(A)/2+\eps.$ 

Without loss of generality, one may assume that $\F\subseteq A_1$. Then there is an open cover $\alpha$ of $X_1$ such that for any $f\in\F$ and any $U\in\alpha$, one has $$\norm{f(x)-f(y)}<\eps\quad\forall x, y\in U.$$

Choose $j$ and an open cover $\beta$ of $X_j$ with $\beta\succ\alpha_j$ such that $$\frac{\mathrm{ord}(\beta)}{n_j(\beta)}<\gamma_\mathrm{c}(A)+\eps.$$

Consider the branched cover $\tilde{\beta}:=\mathrm{Ind}^{\tilde{\alpha}_j}_\beta$. Note that $$\varphi_j(f)=\sum_{i=1}^k(f\circ\lambda_i)p_i,\quad\forall f\in A_1,$$ where $\lambda_i$ are continuous maps from $X_j$ to $X_1$, and $\{p_i;\ 1\leq i\leq k\}$ is a family of mutually orthogonal projections in $\mathrm{C}(X_j)\otimes\mathcal K$. Then, the family of projections $$\{p_i|_{V};\ 1\leq i\leq k, V\in\beta\}$$ is compatible to $\tilde{\beta}$. Indeed, for each projection $p_i|_V$, it corresponds to $(V, [p_i|_V])\in\tilde{\beta}$. 

Let $\{\phi_V;\ V\in\beta\}$ be a partition of identity subordinate to $\beta$, and let $\{x_V;\ V\in\beta\}$ be a set of points with $x_V\in V$. Then, for any $f\in\F$, one has that for any $x\in X_j$,
\begin{eqnarray}
&&\norm{\sum_{i=1}^k\sum_{V\in\beta}(f\circ\lambda_i)(x_V)\phi_V(x)p_i(x)-\varphi_j(f)(x)}\\
&=&\norm{\sum_{i=1}^k\sum_{V\in\beta}(f\circ\lambda_i)(x_V)\phi_V(x)p_i(x)-(\sum_{i=1}^k(f\circ\lambda_i)(x)p_i(x))(\sum_{V\in\beta}\phi_V(x))}\\
&=&\norm{\sum_{i=1}^k\sum_{V\in\beta}(f\circ\lambda_i)(x_V)\phi_V(x)p_i(x)-\sum_{i=1}^k\sum_{V\in\beta}(f\circ\lambda_i)(x)\phi_V(x)p_i(x)}\\
&=&\norm{\sum_{i=1}^k\sum_{V\in\beta}((f\circ\lambda_i)(x_V)-(f\circ\lambda_i)(x))\phi_V(x)p_i(x)}\\
&\leq&\max_{1\leq i\leq k}\sum_{V\in\beta}\norm{((f\circ\lambda_i)(x_V)-(f\circ\lambda_i)(x))\phi_V(x)p_i(x)}\leq\eps,
\end{eqnarray}
and thus $$\varphi_j(\F)\subseteq_\eps \textrm{C*}\{\phi_Vp_i|_V;\ 1\leq i\leq k, V\in\beta\}.$$

By Lemma \ref{approx-rsa}, there is a sub-C*-algebra $S\subseteq A$ such that $$\varphi_j(\F)\subseteq_\eps S,$$ and $S$ has a recursive subhomogeneous C*-algebra decomposition with topological dimension at most $\mathrm{ord}(\beta)+1$, and the dimensions of the irreducible representations of $S$ are at least $n_j=\mathrm{mul}(\tilde{\beta})$.

Since $A$ is simple, $n_j\to\infty$. By Theorem 5.1 of \cite{Toms-Comp-DS}, one has that $\mathrm{rc}(S)\leq \gamma_{\mathrm{c}}(A)/2+\eps$. Thus, $S$ is the desired sub-C*-algebra. 
\end{proof}

\begin{cor}\label{CMD-zero}
Let $A$ be a simple AH-algebra with generalized diagonal maps. If $A$ has Cuntz mean dimension zero, then the Cuntz semigroup of $A$ is almost unperforated. In particular, the C*-algebra $A$ has strict comparison of positive elements.
\end{cor}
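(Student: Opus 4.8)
The plan is to re-route the local approximation built inside the proof of Theorem \ref{Cuntz-approx} through Lemma \ref{LOA-RC} instead of Lemma \ref{LOA-RC1}. The reason one cannot simply quote Theorem \ref{Cuntz-approx} to conclude $\mathrm{rc}(A)\le\gamma_{\mathrm c}(A)/2=0$ and stop is that radius of comparison zero is, a priori, weaker than strict comparison: the function $\tau\mapsto d_\tau(b)-d_\tau(a)$ is only a difference of lower semicontinuous functions (Lemma \ref{LSC-TR}), so a pointwise strict inequality of dimension functions need not come with a uniform gap. What one actually wants is almost unperforation of $W(A)$, from which strict comparison then follows for exact algebras.

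First I would revisit the proof of Theorem \ref{Cuntz-approx}: given a finite subset $\F\subseteq A$ and $\eps>0$, after arranging $\F\subseteq_{\eps/2}A_i$ and choosing an open cover $\alpha$ of $X_i$ on whose members every element of $\F$ varies by less than $\eps$, the hypothesis $\gamma_{\mathrm c}(A)=0$ (via Definition \ref{cmdim} together with the monotonicity of the $\gamma_i$) provides an index $j$ and a refinement $\beta\succ\alpha_j$ with $\mathrm{ord}(\beta)/n_j(\beta)$ as small as we please; Lemma \ref{approx-rsa} then yields a sub-C*-algebra $S\subseteq A$ with $\F\subseteq_\eps S$ that is recursive subhomogeneous with dimension ratio $\mathrm{ord}(\beta)/\mathrm{mul}(\tilde{\beta})$ arbitrarily small, so that $\mathrm{rc}(S)\le\gamma_{\mathrm c}(A)/2+\eps=\eps$ by Theorem 5.1 of \cite{Toms-Comp-DS}. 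Taking $\eps=\tfrac12\min\{\eps_1,\eps_2\}$ shows that for every finite $\F\subseteq A$ and all $\eps_1,\eps_2>0$ there is a sub-C*-algebra $C\subseteq A$ with $\F\subseteq_{\eps_1}C$ and $\mathrm{rc}(C)<\eps_2$.

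This is exactly the hypothesis of Lemma \ref{LOA-RC}, so that lemma gives at once that $W(A)$ is almost unperforated; and since $A$ is simple, unital, and nuclear (hence exact), almost unperforation yields strict comparison of positive elements by Corollary 4.6 of \cite{Ror-Z-stable} (this last implication is already built into the conclusion of Lemma \ref{LOA-RC}). I do not expect any genuine obstacle here, since all the substance is already in Theorem \ref{Cuntz-approx}; the only points requiring a little care are the elementary bookkeeping of the $\eps$'s and the verification that $\gamma_{\mathrm c}(A)=0$ genuinely forces $\mathrm{ord}(\beta)/n_j(\beta)$ to be arbitrarily small for suitable $j$, which is immediate from Definition \ref{cmdim}.
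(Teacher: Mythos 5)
Your proposal matches the paper's proof: the paper likewise reruns the local approximation argument of Theorem \ref{Cuntz-approx} to produce, for every finite set $\F$ and $\eps>0$, a sub-C*-algebra $S$ with $\F\subseteq_\eps S$ and $\mathrm{rc}(S)<\eps$, and then invokes Lemma \ref{LOA-RC}. Your remark on why radius of comparison zero alone does not immediately give strict comparison correctly identifies the reason the detour through almost unperforation (Lemma \ref{LOA-RC}) is needed.
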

\begin{proof}
By Theorem \ref{Cuntz-approx}, for any finite subset $\mathcal F$ and any $\eps>0$, there is a sub-C*-algebra $S$ such that $\mathcal F\subseteq_\eps S$ and $\mathrm{rc}(S)<\eps$. Then, the corollary follows from Lemma \ref{LOA-RC}.
\end{proof}

\begin{cor}\label{CMD-non-zero}
Let $A$ be a simple AH-algebra with generalized diagonal maps. Then one has that $\mathrm{rc}(A)\leq \gamma_{\mathrm c}(A)/2$.
\end{cor}
\begin{proof}
By Theorem \ref{Cuntz-approx}, the C*-algebra $A$ satisfies Condition (1) of Lemma \ref{LOA-RC1}. Since $A$ is simple, the number of eigenvalue maps between each direct summand of $A_i$ and $A_j$ goes to infinity as $j\to\infty$. Thus a simple calculation shows that Condition (2) of Lemma \ref{LOA-RC1} is satisfied automatically. Then, the corollary follows directly from Lemma \ref{LOA-RC1}. 
\end{proof}

\begin{thm}\label{Cuntz-zero}
Let $A$ be a simple AH-algebra with generalized diagonal maps. If $A$ has Cuntz mean dimension zero, then $A$ is isomorphic to an AH-algebra without dimension growth.
\end{thm}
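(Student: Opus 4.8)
The plan is to mimic, in the setting of generalized diagonal maps, the proof of Corollary \ref{rob}, using Corollary \ref{CMD-zero} in place of Theorem \ref{MD0-Comp} and the recursive subhomogeneous approximation implicit in the proof of Theorem \ref{Cuntz-approx} in place of Theorem \ref{LOA}.

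First I would record the following local approximation. Since $\gamma_{\mathrm c}(A)=0$, the computation carried out in the proof of Theorem \ref{Cuntz-approx} shows that for every finite subset $\F\subseteq A$ and every $\eps>0$ there are $j$ and an open cover $\beta$ of $X_j$ with $\beta\succ\alpha_j$ and $\mathrm{ord}(\beta)/n_j(\beta)<\eps$; applying Lemma \ref{approx-rsa} to the branched cover $\mathrm{Ind}^{\tilde{\alpha}_j}_\beta$ and a partition of identity subordinate to $\beta$ then produces a unital sub-C*-algebra $S\subseteq A$ with $\F\subseteq_\eps S$ which admits a recursive subhomogeneous decomposition of topological dimension at most $\mathrm{ord}(\beta)$ and with all irreducible representations of dimension at least $\mathrm{mul}(\mathrm{Ind}^{\tilde{\alpha}_j}_\beta)=n_j(\beta)$. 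Hence the dimension ratio of $S$ is less than $\eps$, and, invoking the simplicity of $A$ exactly as in the proof of Lemma \ref{LOA-AD}, the dimensions of the irreducible representations of $S$ may be taken as large as one wishes. Thus $A$ satisfies the hypothesis of Lemma \ref{LOA-AD}.

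Next I would combine Corollary \ref{CMD-zero} (so that $W(A)$ is almost unperforated and $A$ has strict comparison of positive elements) with the previous step and Lemma \ref{LOA-AD} (so that every strictly positive continuous affine function on $\mathrm{T}(A)$, hence every element of $\mathrm{SAff}(\mathrm{T}(A))$ by taking suprema of increasing sequences, arises as $d_\tau(a)$ for some positive $a$ over $A$). Together these identify the Cuntz semigroup of $A$ with $\mathrm{V}(A)\sqcup\mathrm{SAff}(\mathrm{T}(A))$ equipped with its natural order; in particular it is almost unperforated and almost divisible. By \cite{Winter-Z-stable-02} this forces $A$ to be $\mathcal Z$-stable. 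Since $A$ is a unital simple separable nuclear $\mathcal Z$-stable C*-algebra satisfying the UCT, by \cite{Winter-Z}, \cite{Lin-Asy}, \cite{L-N}, and \cite{lnclasn} it is isomorphic to an AH-algebra without dimension growth, which is what is wanted.

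The hard part is the first step: one must check that the recursive subhomogeneous algebras supplied by Lemma \ref{approx-rsa} inside the proof of Theorem \ref{Cuntz-approx} genuinely have dimension ratio tending to zero, that is, that the multiplicity $\mathrm{mul}(\mathrm{Ind}^{\tilde{\alpha}_j}_\beta)$ which bounds their irreducible representation dimensions from below coincides with the quantity $n_j(\beta)$ entering the definition of the Cuntz mean dimension, and that this lower bound can be pushed to infinity using simplicity. Once this matching is confirmed, Steps 2 and 3 are a verbatim repetition of the argument behind Corollary \ref{rob}.
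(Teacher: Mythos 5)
Your proposal is correct and follows essentially the same route as the paper: Corollary \ref{CMD-zero} for almost unperforation, Lemma \ref{LOA-AD} (with its hypothesis verified via the recursive subhomogeneous approximation from the proof of Theorem \ref{Cuntz-approx} and Lemma \ref{approx-rsa}) for the range of the dimension functions, then $\mathcal Z$-stability and classification. The paper's own proof is a two-line citation of the same ingredients; you have merely made explicit the verification that $A$ is locally approximated by recursive subhomogeneous algebras of small dimension ratio, which the paper leaves implicit.
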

\begin{proof}
By Corollary \ref{CMD-zero} and Lemma \ref{LOA-AD}, the Cuntz semigroup of $A$ is $\mathrm{V}(A)\sqcup\mathrm{SAff}(\tr(A))$. Then, the C*-algebra $A$ is $\mathcal Z$-stable and hence is isomorphic to an AH-algebra without dimension growth.
\end{proof}
One has the following corollary immediately. 
\begin{cor}
Let $A$ be an AH-algebra with generalized diagonal maps with all projections $p_1, ..., p_k$ are equivalent (in particular, if all base spaces of $A_i$ are contractible). If $A$ has at most countably many extreme tracial states, or there exists $M>0$ such that $\rho^{-1}(\kappa)$ has at most $M$ extreme points for any $\kappa\in\textrm{S}(\Kzero(A))$, then $A$ is isomorphic to an AH-algebra without dimension growth.
\end{cor}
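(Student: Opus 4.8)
The plan is to reduce the statement to Theorem~\ref{Cuntz-zero} by showing that, under either hypothesis, the Cuntz mean dimension $\gamma_{\mathrm c}(A)$ vanishes. The key first observation is that since all the projections $p_1,\dots,p_k$ attached to each connecting map are Murray--von Neumann equivalent, Remark~\ref{degenerat} applies and gives $\gamma_{\mathrm c}(A)=\gamma(A)$, the mean dimension of Section~3. This quantity is meaningful here: the eigenvalue patterns of a generalized diagonal map are exactly the continuous maps $\lambda_1,\dots,\lambda_k$ (each occurring with multiplicity $\mathrm{rank}(p_i)$), so such an $A$ is an AH-algebra whose eigenvalue maps are separated by continuous maps in the sense required by Definition~\ref{mdim} and Definition~\ref{Defn-SBP}, and Lemma~\ref{baby-lisa}, Theorem~\ref{nancy}, and Theorem~\ref{RR0-SBP} are applicable to it. For the parenthetical case, if every base space $X_i$ is contractible then each $p_i$ is a trivial projection, say of rank $d_i$, hence decomposes as a sum of $d_i$ mutually orthogonal rank-one trivial projections; since rank-one trivial projections over any two contractible spaces are equivalent, one may refine the given generalized diagonal presentation to one in which every $p_i$ is equivalent (replacing each $\lambda_i$ by $d_i$ copies of itself), and this refinement is stable under composition of the connecting maps and does not affect simplicity. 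So this case reduces to the main one, and it remains only to prove $\gamma(A)=0$, $A$ being simple and unital.

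I would then dispatch the two trace hypotheses separately, in each case invoking Section~3. If $A$ has at most countably many extreme tracial states, the cardinality $c$ of its extreme boundary satisfies $\aleph_0 c\le \aleph_0\cdot\aleph_0=\aleph_0 < 2^{\aleph_0}$, so Corollary~\ref{lisa} (via Lemma~\ref{baby-lisa}) shows $A$ has the small boundary property; by Lemma~\ref{judy} it has the small boundary refinement property, and by Theorem~\ref{nancy} it has mean dimension zero. If instead there is $M>0$ with $\rho^{-1}(\kappa)$ having at most $M$ extreme points for every $\kappa\in\mathrm{S}(\Kzero(A))$, then Theorem~\ref{RR0-SBP} gives directly that $A$ has the small boundary refinement property, hence mean dimension zero. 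Either way $\gamma(A)=0$.

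Putting the pieces together, $\gamma_{\mathrm c}(A)=\gamma(A)=0$, and Theorem~\ref{Cuntz-zero} then yields that $A$ is isomorphic to an AH-algebra without dimension growth, completing the proof.

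I expect the only real delicacy to be the bookkeeping in the first paragraph: one must check that a generalized diagonal inductive limit with all $p_i$ equivalent genuinely falls within the class for which $\gamma(A)$, the orbit capacities, and the SBP/SBRP of Section~3 are defined, so that the cited theorems apply verbatim rather than merely in spirit, and that the rank-splitting used in the contractible case is carried out compatibly along the whole inductive system. Everything downstream of that is a routine chain of citations.
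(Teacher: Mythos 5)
Your proposal is correct and follows the same route as the paper: the paper's proof is precisely the chain Corollary~\ref{lisa} / Theorem~\ref{RR0-SBP} $\Rightarrow \gamma(A)=0$, then Remark~\ref{degenerat} $\Rightarrow \gamma_{\mathrm c}(A)=\gamma(A)=0$, then Theorem~\ref{Cuntz-zero}. Your additional care about the contractible-base-space reduction and about why the Section~3 machinery applies to generalized diagonal maps is more than the paper records, but it is elaboration of the same argument, not a different one.
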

\begin{proof}
It follows from Corollary \ref{lisa} and Theorem \ref{RR0-SBP} that $\gamma(A)=0$. By Remark \ref{degenerat}, one has that $\gamma_{\mathrm{c}}(A)=\gamma(A)=0$. Then the AH-algebra $A$ is isomorphic to an AH-algebra without dimension growth by Theorem \ref{Cuntz-zero}.
\end{proof}

This corollary can be generalized as the following:
\begin{defn}
An AH-algebra with generalized diagonal maps has Property (D) if there is $\delta>0$ such that any connecting map can be induced by a pair $(\lambda_i, p_i)$ with a grouping among $\{p_i\}$ $$\{\{p_{1, 1}, ..., p_{1, c_1}\}, ..., \{p_{l, 1}, ..., p_{l, c_l} \}\}$$ such that that any two projection in one group are Murray-von Nuemann equivalent and $$\frac{\sum_{k=1}^{c_j}\textrm{rank}(p_{j, k})}{\sum_{i}\textrm{rank}(p_{i})}\geq\delta$$ for any $j$.
\end{defn}

\begin{cor}
Let $A$ be a simple unital AH-algebra with generalized diagonal maps, {and assume that $A$ has Property (D)}. If $A$ has at most countably many extreme tracial states, or if there exists $M>0$ such that $\rho^{-1}(\kappa)$ has at most $M$ extreme points for any $\kappa\in\textrm{S}(\Kzero(A))$, then $A$ is isomorphic to an AH-algebra without dimension growth.
\end{cor}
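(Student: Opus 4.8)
The plan is to reduce to Theorem \ref{Cuntz-zero} by showing that $\gamma_{\mathrm c}(A)=0$; once this is established, Theorem \ref{Cuntz-zero} gives that $A$ is isomorphic to an AH-algebra without dimension growth. This is the common generalization of the preceding corollary: there the hypothesis that $p_1,\dots,p_k$ are all Murray--von Neumann equivalent forces $\gamma_{\mathrm c}(A)=\gamma(A)$ by Remark \ref{degenerat}, and then $\gamma(A)=0$ by Corollary \ref{lisa} and Theorem \ref{RR0-SBP}. Under the weaker Property (D) we no longer have $\gamma_{\mathrm c}=\gamma$, so the two ingredients -- the boundary-avoidance coming from the trace hypotheses, and a lower bound on the multiplicities appearing in the denominator of $r_j(\alpha)$ -- must be combined directly at the level of branched covers.

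\emph{Step 1: the multiplicity bound from Property (D).} Fix an open cover $\alpha$ of $X_i$ and $\eps>0$; it suffices to exhibit infinitely many $j$ with $r_j(\alpha)<\eps$. For the connecting map $A_{j-1}\to A_j$ induced by a pair $(\lambda_m,p_m)_{m=1}^{k}$ witnessing Property (D), with grouping $\{\{p_{t,1},\dots,p_{t,c_t}\}:1\le t\le l\}$ and $s_t:=\mathrm{rank}(p_{t,1})$, one has $c_t s_t\ge\delta\sum_m\mathrm{rank}(p_m)$ for every $t$. Because restriction of projections preserves Murray--von Neumann equivalence, for \emph{any} refinement $\beta\succ\alpha_j$ and any $W\in\beta$ the classes over $W$ in $\mathrm{Ind}_\beta^{\tilde\alpha_j}$ coming from a single group $t$ remain mutually equivalent, so the partition class of $\mathcal K_W$ containing them has cardinality at least $c_t$ and rank at least $s_t\,\mathrm{rank}(1_{A_{j-1}})$; hence $\mathrm{mul}(W)\ge\min_t c_t s_t\,\mathrm{rank}(1_{A_{j-1}})\ge\delta\big(\sum_m\mathrm{rank}(p_m)\big)\mathrm{rank}(1_{A_{j-1}})=\delta\,\mathrm{rank}(1_{A_j})$. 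Therefore $n_j(\beta)=\mathrm{mul}(\mathrm{Ind}_\beta^{\tilde\alpha_j})\ge\delta\,\mathrm{rank}(1_{A_j})$ for every admissible $\beta$, so $r_j(\alpha)\le\mathrm{ord}(\beta)/(\delta\,\mathrm{rank}(1_{A_j}))$.

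\emph{Step 2: controlling $\mathrm{ord}(\beta)$ via the trace hypotheses.} Both hypotheses -- at most countably many extremal traces, via the cardinality argument of Lemma \ref{baby-lisa}, or uniformly bounded fibres of $\rho$, via Lemma \ref{flat} -- yield, exactly as in Corollary \ref{lisa} and Theorem \ref{RR0-SBP} (the orbit capacity for generalized diagonal maps simply weights the eigenvalue map $\lambda_m$ by $\mathrm{rank}(p_m)$, which is how $\mathrm{Tr}$ behaves, so the arguments transcribe verbatim), that for $j$ large one can choose a refinement $\beta\succ\alpha_j$ for which a $\delta'$-neighbourhood of $\bigcup_{W\in\beta}\partial W$ has small orbit capacity. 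Then, following the proof of Theorem \ref{nancy}, the subordinate partition of unity attached to $\beta$ produces a $\beta$-compatible map onto a finite union of affine subspaces whose dimension is at most $\eps$ times the ambient coordinate dimension; the essential modification is that the coordinate block attached to $\lambda_m$ must be taken $\mathrm{rank}(p_m)$-fold -- equivalently, one argues with the branched cover and Lemma \ref{approx-rsa} rather than with the plain cover -- so that it is the $\mathrm{rank}(p_m)$-weighted fraction of eigenvalue maps hitting the boundary, the quantity actually bounded by the orbit-capacity estimate, that governs the dimension drop. This gives $\mathrm{ord}(\beta)\le\eps\,|\alpha|\,\big(\sum_m\mathrm{rank}(p_m)\big)\mathrm{rank}(1_{A_{j-1}})=\eps\,|\alpha|\,\mathrm{rank}(1_{A_j})$, and combining with Step 1, $r_j(\alpha)\le\eps\,|\alpha|/\delta$. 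Since $\eps$ is arbitrary while $|\alpha|$ and $\delta$ are fixed, $\gamma_i(A)=0$ for every $i$, hence $\gamma_{\mathrm c}(A)=0$.

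\emph{Main obstacle.} The delicate point is Step 2: the single refinement $\beta$ must simultaneously trap the boundaries in a set of small orbit capacity \emph{and} have order small relative to the fibre dimension, and this forces one to redo the dimension-reduction step of Theorem \ref{nancy} with projection-weighted coordinates, so that a rank-weighted smallness statement (which the trace estimates provide) translates into smallness of $\mathrm{ord}(\beta)$ -- one cannot instead bound the mere \emph{number} of offending eigenvalue maps. A secondary bookkeeping issue is making sure that $r_j(\alpha)$ is unwound through the one-step connecting map $A_{j-1}\to A_j$ (to which Property (D) applies with the same $\delta$, via the persistence of the grouping under restriction), with $\alpha$ pushed forward to $X_{j-1}$; everything else is a routine transcription of the diagonal-map arguments together with the elementary observation that restriction preserves Murray--von Neumann equivalence.
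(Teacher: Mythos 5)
Your proposal is correct and follows essentially the same route as the paper: Property (D) gives the uniform lower bound $n_j(\beta)\geq\delta\,\mathrm{rank}(1_{A_j})$ on the multiplicity of any induced branched cover, while the trace hypotheses give $\gamma(A)=0$ (via Corollary \ref{lisa} and Theorem \ref{RR0-SBP}, whose proofs apply with the eigenvalue maps $\lambda_m$ counted with multiplicity $\mathrm{rank}(p_m)$), so one can choose $\beta\succ\alpha_j$ with $\mathrm{ord}(\beta)$ small relative to $\mathrm{rank}(1_{A_j})$, forcing $r_j(\alpha)$ and hence $\gamma_{\mathrm c}(A)$ to vanish, after which Theorem \ref{Cuntz-zero} applies. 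The paper simply asserts ``$\gamma(A)=0$'' where you carefully unpack the rank-weighted orbit-capacity bookkeeping; that elaboration (including the aside about Lemma \ref{approx-rsa}, which is not actually needed here) is a more detailed rendering of the same argument rather than a different one.
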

\begin{proof}
Suppose that each $X_i$ has only one connected component {(the other cases can be proved in a similar way)}. Note that $\gamma(A)=0$. Fix $A_i$ and consider an open cover $\alpha$ of $X_i$. {Let $\delta$ be the constant of Property (D)}. Fix an arbitrary $\eps>0$. Since $\gamma(A)=0$, one may assume that there is an open cover $\beta$ of $X_{i+1}$ such that $\beta\succ \varphi_{i, i+1}(\alpha)$ and $\textrm{ord}(\beta)<\eps\delta d$, where $d$ is the dimension of the irreducible representation of $A_{i+1}$. 

Consider the induced branched cover by $\beta$. One then has  $$\frac{\mathrm{ord}(\beta)}{n_{i+1}(\beta)}\leq \frac{\mathrm{ord}(\beta)}{d\delta}\leq\eps.$$ Thus, $\gamma_{\textrm{c}}(A)=0$, and hence $A$ is an AH-algebra without dimension growth, as desired.
\end{proof}

\section{Variation mean dimension for general AH-algebras}

For general AH-algebras, the variation mean dimension is considered in this section. Similar to the mean dimension, the class of AH-algebra with variation mean dimension zero are classifiable. However, {it is not clear which global property of the AH-algebra would guarantee the variation mean dimension to be zero. For instance, it is not clear if real rank zero could imply variation mean dimension zero.}

Let $A=\varinjlim(A_i, \varphi_i)$ be an AH-algebra. For any finite subset $\F\subseteq A_i$ and any $\eps>0$, consider $\mathrm{Cov}(\F, \eps)$, the collection of open covers $\alpha$ of $X_i$ satisfying that for any $U\in \alpha$ and any $x, y\in U$, one has that $\norm{f(x)-f(y)}<\eps$, $\forall f\in\F$. Note that if $\alpha\in\mathrm{Cov}(\F, \eps)$ and $\beta\succ\alpha$, then $\beta\in \mathrm{Cov}(\F, \eps)$.

\begin{defn}
For any $\F\in A_i$ and any $\eps>0$, set $$\mathcal{D}(\F, \eps)=\min\{\mathrm{ord}(\alpha);\  \alpha\in\mathrm{Cov}(\F, \eps)\},$$ and set $$\nu_i=\sup_{\F\subseteq A_i, \eps>0}\liminf_{j\to\infty}\max\{\frac{\mathcal{D}(\varphi_{i, j}^k(\F), \eps)}{n_{j, k}};\ 1\leq k\leq h_j\}.$$ The sequence $(\nu_i)$ is increasing, and the limit $\nu$ is the variation mean dimension of $A$.
\end{defn}

\begin{lem}
If $A$ is an AH-algebra with diagonal maps, then $\gamma(A)=\nu(A)$.
\end{lem}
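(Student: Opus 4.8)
The plan is to prove that $\gamma_i(A)=\nu_i(A)$ for every $i$; since $\gamma(A)=\lim_i\gamma_i(A)$ by Definition~\ref{mdim} and $\nu(A)=\lim_i\nu_i(A)$ by the definition of the variation mean dimension, this gives the lemma. Two structural facts will be used repeatedly. First, $\mathrm{Cov}(\F,\eps)$ is closed under refinement (noted just before the definition of $\nu$). Second, $\mathcal D$ is monotone under refinement: if $\alpha'\succ\alpha$ then $\{\beta:\beta\succ\alpha'\}\subseteq\{\beta:\beta\succ\alpha\}$ by transitivity of $\succ$, so $\mathcal D(\alpha')\geq\mathcal D(\alpha)$, and likewise $\varphi_{i,j}^k(\alpha')\succ\varphi_{i,j}^k(\alpha)$ whenever $\alpha'\succ\alpha$. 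I will also use that $j\mapsto\max_k\mathcal D(\varphi_{i,j}^k(\alpha_i))/n_{j,k}$ is decreasing, hence convergent, as already established in the text.

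For $\nu_i(A)\leq\gamma_i(A)$: fix a finite $\F\subseteq A_i$ and $\eps>0$. Since the elements of $\F$ are continuous matrix-valued functions on the compact space $X_i$, a standard compactness argument produces a finite open cover $\alpha_i\in\mathrm{Cov}(\F,\eps/2)$ with members in single connected components. By the remark immediately preceding Theorem~\ref{LOA}, on each member of $\varphi_{i,j}^k(\alpha_i)$ every $\varphi_{i,j}^k(f)$, $f\in\F$, varies by at most $\eps/2$; hence $\varphi_{i,j}^k(\alpha_i)\in\mathrm{Cov}(\varphi_{i,j}^k(\F),\eps)$, and so is every refinement of it. Therefore
$$\mathcal D(\varphi_{i,j}^k(\F),\eps)=\min_{\beta\in\mathrm{Cov}(\varphi_{i,j}^k(\F),\eps)}\mathrm{ord}(\beta)\ \leq\ \min_{\beta\succ\varphi_{i,j}^k(\alpha_i)}\mathrm{ord}(\beta)=\mathcal D(\varphi_{i,j}^k(\alpha_i)).$$
Dividing by $n_{j,k}$, maximizing over $k$, and taking $\liminf_j$ (the right side being convergent) gives $\liminf_j\max_k\mathcal D(\varphi_{i,j}^k(\F),\eps)/n_{j,k}\leq\lim_j\max_k\mathcal D(\varphi_{i,j}^k(\alpha_i))/n_{j,k}\leq\gamma_i(A)$. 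Taking the supremum over $\F$ and $\eps$ yields $\nu_i(A)\leq\gamma_i(A)$.

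For $\nu_i(A)\geq\gamma_i(A)$: fix a finite open cover $\alpha_i=\{U_1,\dots,U_p\}$ of $X_i$, each $U_s$ inside one connected component. Choose a partition of unity $\phi_1,\dots,\phi_p$ subordinate to $\alpha_i$ and set $\F=\{\phi_s\cdot 1_{A_i}:1\leq s\leq p\}$, $\eps=1/(2p)$. The elementary estimate $\|aP-bQ\|\geq|a-b|$ for scalars $a,b\geq0$ and projections $P,Q$ (test against a unit vector in the range of $P$, or of $Q$), together with the diagonal form $\varphi_{i,j}^k(h)=\mathrm{diag}\big(h\circ\lambda_{i,j}^{l,k}(m)\big)_{l,m}$ of the connecting maps, shows that if $\beta\in\mathrm{Cov}(\varphi_{i,j}^k(\F),\eps)$ then for every $V\in\beta$, $x,y\in V$, and every eigenvalue map $\lambda=\lambda_{i,j}^{l,k}(m)$ one has $|\phi_s(\lambda(x))-\phi_s(\lambda(y))|<\eps$ for all $s$ with $U_s\subseteq X_{i,l}$. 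Fixing $V\in\beta$ and a point $x_V\in V$: for each $\lambda$ mapping into $X_{i,l}$, the at most $p$ functions $\phi_s$ with $U_s\subseteq X_{i,l}$ sum to $1$ on $X_{i,l}$, so some such $\phi_s$ has $\phi_s(\lambda(x_V))\geq1/p$; then $\phi_s(\lambda(y))>1/p-\eps>0$ for all $y\in V$, i.e.\ $\lambda(V)\subseteq\{\phi_s>0\}\subseteq U_s$. Letting $\lambda$ run over all eigenvalue maps shows $V$ lies in a member of $\varphi_{i,j}^k(\alpha_i)=\bigvee_{l,m}(\lambda_{i,j}^{l,k}(m))^{-1}(\alpha_{i,l})$, so $\beta\succ\varphi_{i,j}^k(\alpha_i)$. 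Hence
$$\mathcal D(\varphi_{i,j}^k(\F),\eps)=\min_{\beta\in\mathrm{Cov}(\varphi_{i,j}^k(\F),\eps)}\mathrm{ord}(\beta)\ \geq\ \min_{\beta\succ\varphi_{i,j}^k(\alpha_i)}\mathrm{ord}(\beta)=\mathcal D(\varphi_{i,j}^k(\alpha_i)),$$
and the same limiting procedure gives $\nu_i(A)\geq\liminf_j\max_k\mathcal D(\varphi_{i,j}^k(\F),\eps)/n_{j,k}\geq\lim_j\max_k\mathcal D(\varphi_{i,j}^k(\alpha_i))/n_{j,k}$; as $\alpha_i$ is arbitrary, $\nu_i(A)\geq\gamma_i(A)$. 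Combining the two inequalities gives $\gamma_i(A)=\nu_i(A)$ for all $i$, hence $\gamma(A)=\nu(A)$.

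I expect the only delicate point to be the choice of test set $\F$ and threshold $\eps$ in the second inequality: the reason for building $\F$ from a partition of unity is that the identity $\sum_s\phi_s=1$ survives composition with the eigenvalue maps, so the pigeonhole step succeeds with an $\eps=1/(2p)$ that is independent of $j$; and the inequality $\|aP-bQ\|\geq|a-b|$ is precisely what allows the argument to run without trivializing the projections $p_{i,l}$ cutting down $A_i$. Everything else is bookkeeping with refinements and with the monotonicity and convergence facts already recorded in the paper.
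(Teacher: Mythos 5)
Your proof is correct and takes essentially the same route as the paper's: the easy inequality comes from $\varphi_{i,j}^k(\alpha)\in\mathrm{Cov}(\varphi_{i,j}^k(\F),\eps)$, and the hard inequality uses a partition of unity subordinate to $\alpha$ as the test set $\F$, with a pigeonhole on $\sum_s\phi_s=1$ (which survives composition with the eigenvalue maps because the connecting maps are diagonal) forcing every set of small $\F$-variation into a member of $\varphi_{i,j}^k(\alpha)$. The only differences are cosmetic: the paper uses the threshold $1/(\mathrm{ord}(\alpha)+1)$ where you use $1/(2\abs{\alpha})$, and you are more explicit about the norm estimate $\norm{aP-bQ}\geq\abs{a-b}$ and about passing to the limits in $j$ and $i$.
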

\begin{proof}
For any $\F\subseteq A_i$, $\eps>0$, there is an open cover $\alpha$ of $X_i$ such that for any $U\in\alpha$ and any $x, y\in U$, $\norm{f(x)-f(y)}\leq\eps$ for any $f\in\F$. Since all connecting maps of $A$ are diagonal, one has $$\varphi_{i, j}^k(\alpha)\in\mathrm{Cov}(\varphi_{i, j}^k(\F), \eps),$$ and hence $$\mathcal{D}(\varphi_{i, j}^k(\alpha))\geq\mathcal{D}(\varphi_{i, j}^k(\F), \eps).$$ Therefore, $\gamma(A)\geq\nu(A)$.


On the other hand, for any $\eps>0$, consider any open cover $\alpha$ of $X_i$, and denote by $d=\mathrm{ord}(\alpha)$. 

 Pick a partition of unit $\{\psi_U;\ U\in\alpha\}$ subordinate to {$\alpha$}, and regard them as central elements of $A_i$. Set $\F=\{\psi_U;\ U\in\beta\}\subseteq A_i$. 

Assume that there is an open subset $V$ such that $\forall x, y\in V$, $\norm{f(x)-f(y)}< 1/(d+1)$, $\forall f\in\F$. Since $\mathrm{ord}(\alpha)=d$, for any $x\in X_i$, there exists $\psi_U$ such that $\psi_U(x)\geq 1/(d+1)$ (note that $\sum\psi_U(x)=1$).

Fix $x_0\in V$ and $U_0\in {\alpha}$ such that $\psi_{U_0}(x_0)\geq 1/(d+1)$. Since for any $x\in V$, {$$\norm{\psi_{U_0}(x)-\psi_{U_0}(x_0)}< 1/(d+1),$$} one has that $\varphi_{U_0}(x)>0$ and hence $x\in U_0$. Therefore, $V\subseteq U_0$.

Thus, for any open cover $o\in\mathrm{Cov}(\F, 1/(d+1))$, one has that $o\succ{\alpha}$, and hence $$\mathcal{D}(\F, 1/(d+1))\geq \mathcal{D}(\alpha).$$ 

Consider $A_j$ with $j>i$. If there is an open subset $V\subseteq X_j$ such that $\forall x, y\in V$, $\forall f\in\F$, $$\norm{\varphi_{i, j}^k(f)(x)-\varphi_{i, j}^k(f)(y)}< 1/(d+1),$$ then $$\norm{\psi_U\circ\lambda_{i, j}^{l, k}(m)(x)-\psi_U\circ\lambda_{i, j}^{l, k}(m)(y)} < 1/(d+1)$$ for all $U\in \alpha$, $1\leq l\leq h_i$, and $1\leq m\leq m_{i, j}^{l, k}$. 

For any $l$ and $m$, fix $x_0\in V$ and $U_0\in\alpha$ such that $$\psi_{U_0}(\lambda_{i, j}^{l, k}(m)(x_0))\geq 1/(d+1).$$ Hence one has 
$$\lambda_{i, j}^{l, k}(m)(x)\in U_0,\quad \forall x\in V,$$ 
and 
$$V\subseteq (\lambda_{i, j}^{l, k}(m))^{-1}(U_0).$$ 
Therefore $$V\in\varphi_{i, j}^k(\alpha).$$ Thus, for any open cover $o\in\mathrm{Cov}(\varphi_{i, j}^k(\F), 1/(d+1))$, one has that $o\succ{\varphi_{i, j}^k(\alpha)}$, and $$\mathcal{D}(\varphi_{i, j}^k(\F), 1/(d+1))\geq \mathcal{D}(\varphi_{i, j}^k(\alpha)).$$ Hence $\nu(A)\geq\gamma(A),$ as desired.
\end{proof}

\begin{rem}
In general, the mean dimension might not be equal to the variation mean dimension. For example, consider the Villadsen's algebras of second type (\cite{Vill-sr}). These AH-algebras have zero mean dimension zero, but its variation mean dimension is nonzero (Theorem \ref{MD0-Comp-VMD}).
\end{rem}

Using the same argument as that of Theorem \ref{LOA}, one has the following approximation theorem:

\begin{thm}\label{LOA-VMD}
Let $A$ be an AH-algebra, and denote by $\nu$ the variation mean dimension of $A$. Then for any finite subset $\F\subseteq A$ and any $\eps_1>0$ and $\eps_2>0$, there exists a unital sub-C*-algebra $$C\cong\bigoplus p_i\MC{n_i}{\Omega_i}p_i\subseteq A$$ such that $\F\subseteq_{\eps_1} C$, and 
{$$\frac{\mathrm{dim}(\Omega_i)}{\mathrm{rank}(p_i)}<\gamma+\eps_2,\quad\forall i.$$ }
\end{thm}
\begin{proof}
The proof is the same as that of Theorem \ref{LOA} with the following small modification (the same notation as those in the proof of Theorem \ref{LOA} are used): Once the complex $\Delta$ is constructed, consider the map $\xi: X_2\to \Delta$. Define $C$ to be
$$C:=\{f\in A_2;\ \textrm{$f$ is constant on $\xi^{-1}(y)$ for any $y\in\Delta$}\}\subseteq A_2\subseteq A.$$ Then $C$ is the desired sub-C*-algebra.
\end{proof}

Using this approximation theorem, one has the following theorems:

\begin{thm}\label{MD0-Comp-VMD}
If a simple AH-algebra $A$ has variation mean dimension zero, then it has strict comparison of positive elements, and hence is classifiable.
\end{thm}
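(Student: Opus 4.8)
The plan is to mirror the proof of Theorem \ref{MD0-Comp}, using the local approximation of Theorem \ref{LOA-VMD} in place of that of Theorem \ref{LOA}. First I would fix a finite subset $\F\subseteq A$ and $\eps_1,\eps_2>0$. Since $\nu(A)=0$, Theorem \ref{LOA-VMD} produces a unital sub-C*-algebra $C\cong\bigoplus_i p_i\MC{n_i}{\Omega_i}p_i\subseteq A$ with $\F\subseteq_{\eps_1}C$ and with dimension ratio $\dim(\Omega_i)/\mathrm{rank}(p_i)<\eps_2$ for every $i$. Each summand $p_i\MC{n_i}{\Omega_i}p_i$ is a homogeneous C*-algebra of small dimension ratio, so Theorem 3.15 of \cite{Toms-PLMS} gives $\mathrm{rc}(C)<\eps_2'$, with $\eps_2'$ as small as we please provided $\eps_2$ is taken small enough.

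Next I would invoke Lemma \ref{LOA-RC}. By the previous step, $A$ is simple and, for every finite $\F\subseteq A$ and every $\eps_1,\eps_2'>0$, admits a sub-C*-algebra $C$ with $\F\subseteq_{\eps_1}C$ and $\mathrm{rc}(C)<\eps_2'$; hence Lemma \ref{LOA-RC} shows that $W(A)$ is almost unperforated. Since $A$ is a unital simple exact C*-algebra (AH-algebras are nuclear, hence exact), Corollary 4.6 of \cite{Ror-Z-stable} then gives that $A$ has strict comparison of positive elements. This establishes the first assertion of the theorem.

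For the clause that $A$ is classifiable I would argue as in Corollary \ref{rob}. Given strict comparison, and since the hypothesis of Lemma \ref{LOA-AD} --- local approximation by recursive subhomogeneous C*-algebras of small dimension ratio --- is again furnished by Theorem \ref{LOA-VMD}, every strictly positive continuous affine function on $\mathrm{T}(A)$ is realized as $d_\tau(a)$ for some positive $a\in\mathrm{M}_n(A)$; combined with strict comparison this identifies the Cuntz semigroup of $A$ with $\mathrm{V}(A)\sqcup\mathrm{SAff}(\mathrm{T}(A))$. By \cite{Winter-Z-stable-02} this forces $A$ to be $\mathcal Z$-stable, and then by \cite{Winter-Z}, \cite{Lin-Asy}, \cite{L-N}, and \cite{lnclasn} the algebra $A$ is an AH-algebra without dimension growth, in particular classifiable by its Elliott invariant.

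I do not anticipate a substantive obstacle, since the argument is essentially a repackaging of the diagonal-maps case (Theorem \ref{MD0-Comp} and Corollary \ref{rob}). The only point to watch is that the building blocks coming from Theorem \ref{LOA-VMD} should be treated as recursive subhomogeneous C*-algebras, so that the dimension-ratio bound on the radius of comparison is applied in the recursive-subhomogeneous form (as in \cite{Toms-SDG}, \cite{Toms-Comp-DS}, \cite{Phill-RSA1}); this is exactly the form in which Lemma \ref{LOA-AD} is also phrased, so the two applications are compatible.
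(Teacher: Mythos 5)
Your proposal is correct and is essentially the paper's intended argument: the paper gives no separate proof of this theorem, merely remarking that it follows from the approximation result (Theorem \ref{LOA-VMD}) exactly as Theorem \ref{MD0-Comp} and Corollary \ref{rob} follow from Theorem \ref{LOA}, via Lemma \ref{LOA-RC}, Theorem 3.15 of \cite{Toms-PLMS}, and then Lemma \ref{LOA-AD} together with $\mathcal Z$-stability for the classifiability clause. Your reading of the dimension-ratio conclusion of Theorem \ref{LOA-VMD} (correcting the obvious typo $\mathrm{rank}(p_i)/n_i$ to $\dim(\Omega_i)/\mathrm{rank}(p_i)$) and your remark that the homogeneous blocks are one-stage recursive subhomogeneous algebras are both consistent with the paper.
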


\begin{thm}
If an AH-algebra $A$ has variation mean dimension $\nu$, and if for any $0\leq s\leq 1$ and any $\eps>0$, there is a projection $p$ such that $\abs{\tau(p)-s}\leq\eps$, $\forall\tau\in\mathrm{T}(A)$, then $\mathrm{rc}(A)\leq \nu/2$.
\end{thm}

\begin{rem}
The variation mean dimension {seems to} depend on the inductive decomposition. Even if one replaces the connecting maps between building blocks by some maps which are unitarily equivalent (this will not change the inductive limit algebra), {it is not clear if the variation mean dimension changes}. 
\end{rem}

\section{A closing remark}

Covering dimension has been generalized to nuclear C*-algebras by W. Winter using order-zero maps. Recall that (see, for example \cite{Winter-Cdim-I})

\begin{defn}
A c.p. map $\psi: A\to B$ is said to have order zero if for any orthogonal elements $a, b\in A^+$, the images $\psi(a)$ and $\psi(b)$ are orthogonal. A c.p. map $\phi: \bigoplus_{i=1}^s \textrm{M}_{r_i}\to A$ is $n$-decomposable if there is a partition $\bigsqcup_{j=1}^n I_j=\{1, ..., s\}$ such that the restriction of $\phi$ to $\bigoplus_{i\in I_j} \textrm{M}_{r_i}$ has order zero for each $j$, and write $\mathrm{dr}(\phi)=n$.
\end{defn}


\begin{thm}[\cite{Winter-Cdim-I}, Proposition 4.1.1(a)]\label{hom-thm}
If $\phi : F\to A$ is c.p.c. with order zero, then there is a unique *- homomorphism $\pi_\phi: \mathrm{C}F\to A$  such that $\pi_\phi(h\otimes x)=\phi(x)$ $\forall x\in F$, where $\mathrm{C}F$ is the cone $\mathrm{C}_0((0, 1])\otimes F$ over $F$ and $h:=\mathrm{id}_{(0, 1]}$ is the generator of $\mathrm{C}_0((0, 1])$. Conversely, any *-homomorphism $\pi: \mathrm{C}F\to A$ induces such a c.p.c. map $\phi$ of order zero.
\end{thm}

Thus, an $n$-decomposable c.p.c maps with domain finite dimensional C*-algebras are noncommutative analogs for open covers with order $n$.


\begin{defn}\label{defn-nuc-md}
A C*-algebra $A$ is said to have nuclear mean dimension at most $r$ if there is a net $(\psi_\lambda, F_\lambda, \phi_\lambda)$ such that $F_\lambda$ are finite dimensional C*-algebras, $\psi_\lambda: A\to F_\lambda$ are c.p. maps and $\phi_\lambda: F_\lambda\to A$ are c.p.c maps such that 
\begin{enumerate}
\item\label{NMD-cond1} for any $a\in A$, $\phi_\lambda\circ\psi_\lambda(a)\to a$ as $\lambda\to\infty$;
\item\label{NMD-cond2} $$\liminf_{\lambda\to\infty}\frac{\textrm{dr}(\phi_\lambda)}{\textrm{d}(F_\lambda)}\leq r,$$ where $d(F_\lambda)$ is the minimum among the ranks of simple direct summands of $F$.
\end{enumerate}

The C*-algebra $A$ is said to have nuclear mean dimension zero if $r$ can be chosen arbitrarily small.
\end{defn}

\begin{rem}
It is clear that any simple C*-algebra with finite decomposition rank has nuclear mean dimension zero, and it is also a straightforward calculation that any mean dimension zero AH-algebra with diagonal maps has nuclear mean dimension zero. In \cite{Winter-Z-stable-01}, Winter proved that any simple C*-algebras with finite decomposition rank are $\mathcal Z$-stable. So, is any simple nuclear mean dimension zero algebra $\mathcal Z$-stable?
\end{rem}

\bibliographystyle{plain}


\end{document}